\numberwithin{equation}{section}
\theoremstyle{plain}
\newtheorem{theorem}{Theorem}[section]
\newtheorem{remark}[theorem]{Remark}
\newtheorem{lemma}[theorem]{Lemma}
\newtheorem{proposition}[theorem]{Proposition}
\theoremstyle{definition}
\def\ed{\mathrm{d}}
\def\x{\times}
\let\bs=\boldsymbol
\let\mb=\mathbb
\let\mc=\mathcal
\def\x{\mathbf{x}}
\def\brho{\boldsymbol{\rho}}
\def\bF{\boldsymbol{F}}
\def\bFc{F}
\def\bphi{\boldsymbol{\phi}}
\def\blambda{\boldsymbol{\lambda}}
\def\Tc{\mathcal{T}}
\def\Bc{\mathcal{B}}
\def\Cc{\mathcal{C}}
\def\Ic{\mathcal{I}}
\def\Lc{\mathcal{L}}
\def\Kc{\mathcal{K}}
\def\Rc{\mathcal{R}}
\def\Jc{\mathcal{J}}
\newcommand\coarse[1]{{#1}'}
\begin{document}

\title{Computation of optimal transport with finite volumes}

\author[A. Natale]{Andrea Natale}
\address{Andrea Natale (\href{mailto:andrea.natale@u-psud.fr}{\tt andrea.natale@inria.fr}) Inria, Project team Rapsodi, Univ. Lille, CNRS, UMR 8524 - Laboratoire Paul Painlevé, F-59000 Lille, France
} 
\author[G. Todeschi]{Gabriele Todeschi}
\address{Gabriele Todeschi (\href{mailto:gabriele.todeschi@inria.fr}{\tt gabriele.todeschi@inria.fr}):  Inria Paris, Project team Mokaplan, Universit\'e Paris-Dauphine, PSL Research University, UMR CNRS 7534-Ceremade, 75016 Paris, France
} 

\begin{abstract}
We construct Two-Point Flux Approximation (TPFA) finite volume schemes to solve the quadratic optimal transport problem in its dynamic form, namely the problem originally introduced by Benamou and Brenier. We show numerically that these type of discretizations are prone to form instabilities in their more natural implementation, and we propose a variation based on nested meshes in order to overcome these issues. Despite the lack of strict convexity of the problem, we also derive quantitative estimates on the convergence of the method, at least for the discrete potential and the discrete cost. Finally, we introduce a strategy based on the barrier method to solve the discrete optimization problem.
\end{abstract}

\maketitle


\section{Introduction}\label{sec:intro}

The theory of optimal transport provides a robust way to define an interpolation between probability measures which takes into account the geometry of the space where they are defined. This theory is built around the problem of finding the optimal way of reallocating one given density into another, minimizing a total cost of displacement in space. The fundamental nature of such a  problem is responsible for the surprising links between optimal transport (and its generalizations) and physical models, most notably in fluid dynamics or via the theory of gradient flows, but also of its many applications  in social sciences or biology (see, e.g.,  \cite{santambrogio2015optimal} and references therein). Nowadays, several numerical methods are available to solve optimal transport problems and in particular to compute the associated interpolations between measures. However, only few of these can actually be generalized to more complex settings which are relevant for numerical modelling, and moreover their numerical analysis is often neglected. 
	
In this work we consider the numerical discretization of one of the most classical optimal transport problems in which the cost of displacement per unit mass is given by the square of the Euclidean distance. In particular, we consider finite volume discretizations of the so-called dynamical formulation of such a problem, following the approach originally proposed by Benamou and Brenier  \cite{benamou2000computational}. This formulation has inspired some of the first numerical methods for optimal transport, but it is still one of the most general, since it can be adapted easily to very complex settings. We will focus on three main aspects. Firstly, we will expose some numerical issues related to the stability of finite volumes methods that have been considered for this problem, and we propose a strategy based on nested meshes to overcome these. Secondly, we provide quantitative estimates on the convergence of the proposed methods to smooth solutions of the problem. Finally, we tackle the issue of the efficient computation of numerical solutions by applying and analyzing a classical interior point strategy adapted to our setting.


\subsection{Dynamical formulation}

	Consider a convex and compact domain $\Omega$.
	Given two densities $\rho^{in},\rho^f:\Omega \rightarrow [0,+\infty)$ with the same total mass, we consider the problem of finding a time-dependent density $\rho:[0,1] \times \Omega \rightarrow [0,+\infty)$ and a time-dependent momentum  $F: [0,1]\times \Omega \rightarrow \mathbb{R}^d$ solving
	\begin{equation}\label{eq:geod}
	\inf_{\rho,\bFc} \,\Bc(\rho,\bFc) 
	\end{equation}
	where $\rho$ and $\bFc$ satisfy the continuity equation
	\begin{equation}\label{eq:continuity}
	\begin{cases}
	\partial_t \rho+ \nabla \cdot \bFc = 0, \quad &\text{in } [0,1] \times \Omega,\\
	\bFc \cdot n_{\partial \Omega} = 0, \quad &\text{on }  [0,1] \times\partial \Omega,
	\end{cases}
	\end{equation}
	 with the further initial and final conditions $\rho(0,\cdot) = \rho^{in}, \rho(1,\cdot) = \rho^{f}$. The functional $\Bc(\rho,\bFc)$ is defined as follows:
	\begin{equation}\label{eq:BB}
	\Bc(\rho,\bFc)= \int_0^1 \int_{\Omega} B(\rho(t,\cdot),\bFc(t,\cdot)) \, \ed t \,,
	\end{equation}
	with $B:\mathbb{R}\times\mathbb{R}^d \rightarrow [0,+\infty]$ defined by
	\begin{equation}\label{eq:bb}
	B(p,Q) :=
	\begin{cases}
	\frac{|Q|^2}{2p}  &\text{if } p>0, \\
	0 &\text{if } p=0,\, Q=0, \\
	+\infty &\text{else}.
	\end{cases} 
	\end{equation}
	Problem (\ref{eq:geod}) selects the density interpolation between $\rho^{in}$ and $\rho^{f}$ which minimizes the total kinetic energy among all the non-negative solutions of the continuity equation \eqref{eq:continuity}. Note that the problem is written in the variables density-momentum rather than density-velocity, in order to obtain a convex formulation.
	
	Problem (\ref{eq:geod}) admits a dual formulation:
	\begin{equation}\label{eq:dual}
	\sup_{\phi} \int_{\Omega} \phi(1,\cdot)\, \rho^f - \int_{\Omega} \phi(0,\cdot)\, \rho^{in}\, ,
	\end{equation}
	where the potential $\phi: [0,1]\times \Omega \rightarrow \mathbb{R}$ satisfies the Hamilton-Jacobi equation
	\begin{equation}
	\partial_t \phi +\frac{1}{2} |\nabla \phi|^2 \le 0, \quad \text{in } [0,1] \times \Omega.
	\end{equation}
	Note that $\phi$ can be seen as the Lagrange multiplier of the continuity equation constraint \eqref{eq:continuity}. Problems (\ref{eq:geod})-(\ref{eq:dual}) coincide and their solution can be explicitly characterized as the solution to the system of primal-dual optimality conditions, namely:
	\begin{equation}\label{eq:geodesic}
	\begin{cases}
	\partial_t \rho+ \nabla \cdot (\rho \nabla \phi) = 0, \\
	\partial_t \phi +\frac{1}{2} |\nabla \phi|^2 \le 0,
	\end{cases}
	\end{equation}
	where $\bFc = \rho \nabla \phi$ is the optimal momentum and with the additional boundary conditions $\rho \nabla \phi \cdot n_{\partial \Omega} = 0$ on $\partial \Omega$, $\rho(0,\cdot) = \rho^{in}, \rho(1,\cdot) = \rho^{f}$. It is possible to show that the Hamilton-Jacobi equation can be saturated in problem \eqref{eq:dual} (using, e.g., the Hopf formula to characterize the solutions to the Hamitlon-Jacobi equation \cite{BARDI1984hopf}), i.e. the inequality can be replaced by the equality, and consequently also in system \eqref{eq:geodesic} by strong duality.

	Adapting appropriately the definitions above, problem \eqref{eq:geod} provides a notion of interpolation  between $\rho^{in}$ and $\rho^f$ when these latter are arbitrary probability measures. In this case the solution $\rho$ is itself a curve of probability measures which is generally denoted as Wasserstein interpolation (or geodesic), see, e.g.,  \cite{santambrogio2015optimal}.
	Moreover the minimum of the cost \eqref{eq:BB} coincides with half of the Wasserstein-2 distance squared between $\rho^{in}$ and $\rho^f$. More precisely, for a primal-dual solution $(\phi,\rho)$ to system (\ref{eq:geodesic}), this is given by:
	\[
	\frac{W_2^2(\rho^{in},\rho^f)}{2} = \int_0^1 \int_{\Omega} \frac{|\nabla \phi(t,\cdot)|^2}{2} \rho(t,\cdot) \,\ed t   = \int_{\Omega} \phi(1,\cdot) \rho^f   - \int_{\Omega} \phi(0,\cdot) \rho^{in} .
	\]
	
	
	\subsection{Discretization}
	
	In the original work of Benamou and Brenier \cite{benamou2000computational}, problem (\ref{eq:geod}) was discretized on regular grids using centered finite differences. Later in  \cite{papadakis2014optimal} Papadakis, Peyré and Oudet introduced a finite difference discretization using staggered grids, which are better suited for the discretization of the continuity equation. Similar finite differences approaches have been used also in more recent works \cite{carrillo2019primal,osher2018computations}. Note that the use of regular grids can be beneficial for the efficient solution of the scheme, but is not adapated to complex domains. Several finite elements approaches have been considered in order to construct schemes able to handle more general unstructured grids \cite{benamou2015augmented,benamou2016augmented,lavenant2018dynamical,nataleMFEforOT}. In particular in \cite{nataleMFEforOT} the authors proposed a $H(\mathrm{div})-$conforming finite element discretization that preserves at the discrete level the conservative form of the problem, in the same spirit of \cite{papadakis2014optimal}.
	
	Another approach to discretize problem (\ref{eq:geod}) is to use finite volumes, which is a natural choice given the conservative form of the constraint (\ref{eq:continuity}) and allows one to use unstructured grids.
	In \cite{erbarOTgraphs} Erbar, Rumpf, Schmitzer and Simon considered a discretization of problem \eqref{eq:geod} on graphs, which can be written under the formalism of Two-Point Flux Approximation (TPFA) finite volumes \cite{gladbach2018scaling}. They proved the Gamma-convergence of the discrete problem towards a semi-discrete version of \eqref{eq:geod}, discrete in space and continuous in time. In \cite{gladbach2018scaling}, Gladbach, Kopfer and Maas proved a convergence result for this semi-discretization towards the continuous problem. Combining these two results, it is possible to obtain a global convergence result, under conditions on the ratio between the temporal and spatial step sizes. Carrillo, Craig, Wang and Wei proved the Gamma-convergence without conditions on the step sizes but only for sufficiently regular and strictly positive solutions \cite{carrillo2019primal}. They used a centered finite difference discretization, which coincide with TPFA finite volumes on cartesian grids. Finally, in \cite{lavenant2019unconditional} Lavenant proved the weak convergence of discrete solutions (reconstructed as space-time measures) of a large class of time-space discretizations of \eqref{eq:geod}, unconditionally with respect to time and space steps and without assuming any regularity, and applied this result to the discretization studied in \cite{erbarOTgraphs}. The same result has been applied for example to the discretizations proposed in \cite{lavenant2018dynamical,nataleMFEforOT}.
	
	Our starting point in this work is the finite volume discretization presented in \cite{lavenant2019unconditional,erbarOTgraphs}.
	We observe numerically that for this discretization the density interpolation can exhibit oscillations which prevent strong convergence of the numerical solution, even when the exact interpolation is smooth.
	The same phenomenon has been observed by Facca and coauthors in \cite{facca1,facca2} when dealing with finite elements discretizations for the $L^1$ optimal transport problem, which is closely related to (\ref{eq:geod}).
	Our strategy to overcome this issue is inspired by these last works and consists in enriching the space of discrete potentials.
	We will show numerically that such a modification attenuates the oscillations and favors a stronger convergence. 
	
	Note that with this modification, the convergence result in  \cite{lavenant2019unconditional} cannot be applied straightforwardly. However, we will derive quantitative estimates for the convergence of the discrete Wasserstein distance and the discrete potential, which hold both in the enriched and original non-enriched case, in the case of smooth and strictly positive solutions.
	Even if such results are only partial as they do not apply to the density, they are still surprising given that the problem is not strictly convex. Moreover, we are not aware of similar estimates for the discretizations mentioned above. With these results at hand, it is possible to deduce again the weak convergence of the discrete density and momentum.

	
	\subsection{Numerical solution}\label{ssec:intro_num}
	
	A typical approach for solving discrete versions of the dynamical formulation (\ref{eq:geod}) is to apply first order primal dual methods. This goes back to the original paper of Benamou and Brenier \cite{benamou2000computational}, who proposed to use an Alternating Direction Method of Multipliers (ADMM) approach applied to the augmented Lagrangian of the discrete saddle point problem. Later \cite{papadakis2014optimal} considered different proximal splitting methods and recasted the previous algorithm into the same framework. Nowadays, these approaches are frequently used \cite{benamou2015augmented,benamou2016augmented,lavenant2018dynamical,carrillo2019primal,nataleMFEforOT}. In fact, they are robust and can take care automatically of the positivity of the density thanks to the definition of the function $\Bc$. Nevertheless, they are not easy to apply to arbitrary discretizations of the problem (especially on unstructured grids). More importantly, they are efficient only as far as high accuracy is not mandatory and uniform grids are used.

In the present work, we apply the so called barrier method, an instance of the wider class of interior point methods \cite{ConvOptBoyd,Gondzio25years,Terlaky,ForsGillWrig02}. The problem is perturbed by adding to the functional a strictly convex barrier function which repulses the density away from zero. In this way it reduces to an equality-constrained minimization problem, where the minimizer is automatically greater than zero and the objective functional is locally smooth around it, and which can be effectively solved using a Newton scheme. The perturbation introduced by the barrier function can be tuned by multiplying it by a positive coefficient $\mu$ and the original solution is recovered via a continuation method for $\mu$ going to zero.
The final algorithm is robust and can be easily generalized to similar problems (for example, we have already applied it successfully in \cite{nataleTPFAforGF} for the solution of Wasserstein gradient flows). 

A similar strategy has been applied by Achdou and coauthors \cite{achdouMFGnumerical} (although in the context of mean field games), perturbing the Lagrangian associated to the problem with the Dirichlet energies of the density and the potential. Such a perturbation does not ensure the positivity of the solution and this forces the use of a monotone discretization. Using a barrier function allows us to consider more general discretizations, with higher accuracy in space. 
The idea of using a regularization term to deal directly with the positivity constraint has been also explored in \cite{osher2018computations}, where the authors used the Fischer information as penalization term, but without considering a continuation method. In particular, the problem is solved for a fixed (small) value of the perturbation's parameter, leading to diffusive effects.
	
	
	\subsection{Structure of the paper}
	
	In section \ref{sec:scheme} we present the finite volume discretization of (\ref{eq:geod}): we set the notation for the partition of the domain $\Omega$, introduce the discrete operators and then define the discrete optimal transport problem. In section \ref{sec:convergence} we derive quantitative estimates on the convergence of the discrete distance and the discrete potential towards their continuous counterparts, under special hypotheses. In section \ref{sec:barrier} we present the barrier method, the strategy we employ to solve the discrete optimization problem as a sequence of simpler perturbed problems. We conclude with the presentation of few numerical results in order to assess the reliability of the scheme and verify the convergence results, in section \ref{sec:numerics}.
	
	
	\section{Finite Volume discretization}\label{sec:scheme}
	
	
	\subsection{The discretization of $\Omega$}\label{ssec:mesh}
	
	We assume the domain $\Omega \subset \mathbb{R}^d$ to be polygonal if $d=2$ or polyhedral if $d=3$, and we consider an admissible discretization for TPFA finite volumes ~\cite[Definition 9.1]{EGH00}.
	Cartesian grids, Delaunay triangulations or Vorono\"i tessellations are typical examples of admissible meshes in this sense. 
	We denote such a discretization as 
	$\left(\mathcal{T}, \overline{\Sigma}, {(\mathbf{x}_K)}_{K\in\mathcal{T}}\right)$, namely the ensemble of the set of polyhedral control volumes $K$, the set of faces $\sigma$ and the set of cell centers $\x_K$.
	The set $\overline{\Sigma}$ is composed of boundary faces $\Sigma_{ext} = \{ \sigma \subset \partial\Omega\}$ and internal faces $\sigma \in \Sigma = \overline{\Sigma} \setminus \Sigma_{ext}$. We denote by $\Sigma_{K} = \overline{\Sigma}_{K}\cap \Sigma$ the internal faces belonging to $\partial K$.
	The cell-centers $(\mathbf{x}_K)_{K\in\mathcal{T}} \subset \Omega$ are such that, if $K, L \in \mathcal{T}$ share a face $\sigma = K|L$, then the vector $\mathbf{x}_L-\mathbf{x}_K$ is orthogonal to $\sigma$ and has the same orientation as the normal $\bs{n}_{K,\sigma}$ to $\sigma$ outward with respect to  $K$.
	
	We denote the Lebesgue measure of $K\in\mathcal{T}$ by $m_K$.
	For each internal face $\sigma = K|L \in \Sigma$, we denote $m_{\sigma}$ its $(d-1)-$dimensional Lebesgue measure and we refer to the diamond cell as the polyhedron whose edges join $\mathbf{x}_K$ and $\mathbf{x}_L$ to the vertices of $\sigma$.
	Denoting by $d_\sigma = |\mathbf{x}_K-\mathbf{x}_L|$, the measure of the diamond cell is then equal to $m_\sigma d_\sigma/d$. We denote by $d_{K,\sigma}$ the Euclidean distance between the cell center $\mathbf{x}_K$ and the midpoint of the edge $\sigma \in \overline{\Sigma}_K$.
	In figure \ref{fig:FVcell} the notation is exemplified for a triangular element.
	
	We will need to distinguish between two different admissible discretizations of $\Omega$, where one is obtained as a subdivision of the other. We denote by $\left(\coarse{\Tc}, \overline{\coarse{\Sigma}}, {(\mathbf{x}_{\coarse{K}})}_{\coarse{K}\in \coarse{\Tc}}\right)$ the coarse mesh and by $\left(\Tc, \overline{\Sigma}, {(\mathbf{x}_{K})}_{K\in \Tc}\right)$ the fine one, and we require that
	\[
	\forall K \in \Tc,~ \exists \,\coarse{K} \in \coarse{\Tc} \text{ such that } \overline{K} \subseteq \coarse{\overline{K}}.
	\]
	In practice  we will consider two specific instances of this construction. The first is the trivial case where the two meshes coincide. The second holds at least in two dimensions and can be defined as follows. First, we take as coarse mesh a Delaunay triangulation, with cell centers $\x_{\coarse{K}}$ the circumcenters of each cell $\coarse{K}$. We further require that all the triangles are acute, so that all the cell centers $\x_{\coarse{K}}$ lie in the interior of the corresponding cell $\coarse{K}$.
	Then, we define the fine mesh by dividing each triangular cell $\coarse{K}$ into three quadrilaterals by joining $\x_{\coarse{K}}$ to the three midpoints of the edges $\coarse{\sigma}\in \coarse{\overline{\Sigma}}_{\coarse{K}}$. We take again as cell centers $\x_K$ of the fine mesh the circumcenters of each cell $K$. This construction is illustrated in figure \ref{fig:FVcell}. Note that the partition obtained in this way is indeed admissible. 
	
	\begin{figure}
		\centering
		\includegraphics[width=0.45\textwidth]{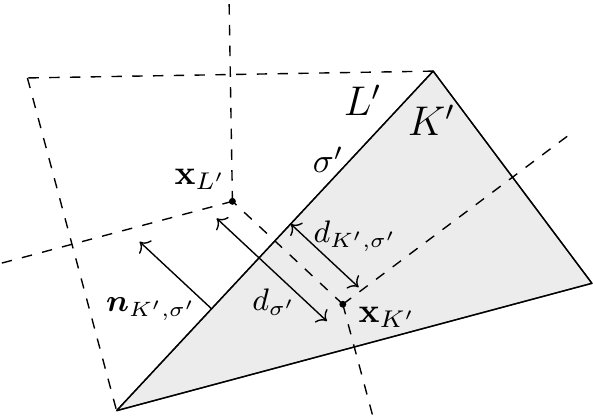} \quad
		\includegraphics[trim={-0.7cm -0cm -0.7cm -0cm},clip,width=0.45\textwidth]{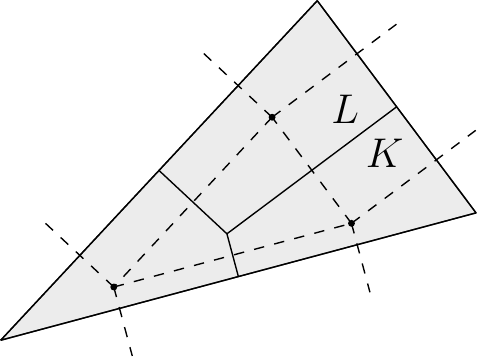}
		\caption{Exemplification of the notation of a triangular cell (left) and its subdivision (right).}
		\label{fig:FVcell}
	\end{figure}
	
	
	\subsection{Discrete spaces and operators}\label{ssec:hspaces}
	
	We introduce two discrete spaces defined on the two meshes, $\mathbb{P}_{\coarse{\mathcal{T}}} = \mathbb{R}^{\coarse{\mathcal{T}}}$ and $\mathbb{P}_{\mathcal{T}} = \mathbb{R}^{\mathcal{T}}$, each one endowed with its own weighted scalar product,
	\[
	\langle \cdot, \cdot \rangle_{\Tc}: (\boldsymbol{a},\boldsymbol{b})\in [\mathbb{P}_{\mathcal{T}}]^2 \mapsto \sum_{K\in\mathcal{T}} a_K b_K m_K\,,
	\]
	and similarly for $\langle \cdot, \cdot \rangle_{\coarse{\Tc}}$.
	Note that $\mathbb{P}_{\coarse{\Tc}}\subseteq \mathbb{P}_{\Tc}$, and we denote by $\Ic$ the canonical injection operator, which is given explicitly by
	\[
	\Ic : \mb{P}_{\coarse{\Tc}} \rightarrow \mb{P}_{\Tc}, \quad (\Ic \rho)_{K} = \rho_{\coarse{K}}, \quad \forall K \subset \coarse{K}.
	\]
	In the case where the two discretizations of $\Omega$ coincide, $\Ic$ is just the identity operator. We will denote by $\mathcal{I}^*$ the adjoint of $\mathcal{I}$, i.e.\ $	\langle  \mathcal{I}^* \cdot, \cdot \rangle_{\Tc'} = \langle \cdot, \mathcal{I} \cdot \rangle_{\Tc}$.
	We further introduce two discrete spaces defined on the finer mesh: the space $\mathbb{P}_{\Sigma} = \mathbb{R}^{\Sigma}$, defined on the diamond cells, endowed with the scalar product 
	\[
	\langle \cdot, \cdot \rangle_{\Sigma}: (\boldsymbol{u},\boldsymbol{v})\in [\mathbb{P}_{\Sigma}]^2 \mapsto \sum_{\sigma\in\Sigma} u_{\sigma} v_{\sigma} m_{\sigma} d_{\sigma} \,,
	\]
	and the space of discrete conservative fluxes,
	\begin{equation}\label{eq:space_fluxes}
	\mathbb{F}_{\mathcal{T}}=\{\boldsymbol{F}=(F_{K,\sigma},F_{L,\sigma})_{\sigma\in\Sigma}\in\mathbb{R}^{2\Sigma}: F_{K,\sigma}+F_{L,\sigma}=0\} \,,
	\end{equation}
	endowed with the scalar product 
	\[\langle \cdot, \cdot \rangle_{\mathbb{F}_{\Tc}}: (\bs{F},\bs{G})\in [\mathbb{F}_{\Tc}]^2 \mapsto \sum_{\sigma\in\Sigma} (F_{K,\sigma} G_{K,\sigma}+F_{L,\sigma} G_{L,\sigma}) \frac{m_{\sigma}d_{\sigma}}{2} \,.
	\]
  We denote by $\|\cdot\|_\Tc$, $\|\cdot\|_{\coarse{\Tc}}$, $\|\cdot\|_\Sigma$ and $\|\cdot\|_{\mathbb{F}_\Tc}$ the norms associated with the inner products defined above.   
  We denote $F_{\sigma} = |F_{K,\sigma}| = |F_{L,\sigma}|$ and we will use the convention  $|\bF| = (F_{\sigma})_{\sigma\in\Sigma} \in \mb{P}_{\Sigma}$ and $(\bF)^2 = (F_{\sigma}^2)_{\sigma\in\Sigma} \in \mb{P}_{\Sigma}$, for $\bF\in\mathbb{F}_{\mathcal{T}}$. Moreover, we define the element-wise multiplication by $\odot$. In particular, given $\bs{F},\bs{G} \in \mathbb{F}_\Tc$ and $\bs{u} \in \mathbb{P}_\Sigma$, we define $\bs{F} \odot \bs{G}, \bs{u} \odot \bs{F} \in \mathbb{F}_\Tc$ by
	\[
	[\bs{F} \odot \bs{G}]_{K,\sigma} \coloneqq F_{K,\sigma} G_{K,\sigma}\,, \quad  [\bs{u} \odot \bs{F}]_{K,\sigma} \coloneqq u_{\sigma} F_{K,\sigma}\,.
	\]
	
	We now introduce the discrete differential operators. The discrete divergence $\mathrm{div}_{\Tc}: \mathbb{F}_\Tc \rightarrow \mathbb{P}_\Tc$ is defined by
	$(\mathrm{div}_{\Tc} \bs{F})_K \coloneqq 	\mathrm{div}_{K} (\bs{F})$ where 
	\[
	\mathrm{div}_{K} \bs{F} \coloneqq  \frac{1}{m_K} \sum_{\sigma\in\Sigma_K} F_{K,\sigma} m_{\sigma}\,.
	\]
	The discrete gradient $\nabla_\Sigma: \mathbb{P}_{\Tc} \rightarrow \mathbb{F}_\Tc$ is defined by $\langle\nabla_\Sigma \bs{a}, \bs{F} \rangle_{\mathbb{F}_\Tc}= - \langle \bs{a} , \mathrm{div}_\Tc \bs{F}\rangle_{\mathbb{P}_\Tc}$. 
	In particular we also have $(\nabla_\Sigma \bs{a})_{K,\sigma} = 	\nabla_{K,\sigma} (\bs{a})$ where
	\[
	\mathrm{\nabla}_{K,\sigma} \bs{a} \coloneqq \frac{a_L -a_K}{d_\sigma} \,.
	\]
	Moreover, as for the discrete conservative fluxes, we define $\mathrm{\nabla}_{\sigma} \bs{a} \coloneqq |\mathrm{\nabla}_{K,\sigma} \bs{a}|$.

	We also need to introduce a reconstruction operator from cells to diamond cells $\Rc_{\Sigma}:\mathbb{P}_{\mathcal{T}}\rightarrow \mathbb{P}_{\Sigma}$, which will be required to construct the discrete energy.
	We require that the operator $\Rc_{\Sigma}$ be a concave function (component-wise), positively 1-homogeneous and positivity preserving. In practice, we will consider two weighted means, $\Lc_\Sigma$ and $\mathcal{H}_\Sigma$, which correspond respectively to a linear and a harmonic mean, and are defined as follows:
	\begin{equation}\label{eq:means}
	(\Lc_\Sigma \bs{a})_{\sigma}= \frac{d_{K,\sigma}}{d_{\sigma}} a_K +\frac{d_{L,\sigma}}{d_{\sigma}} a_L \,, \quad (\mathcal{H}_\Sigma \bs{a})_{\sigma}= \frac{d_\sigma a_K a_L}{d_{K,\sigma} a_L + d_{L,\sigma} a_K}\,,
	\end{equation}
	for any $\bs{a}\in\mathbb{P}_\Tc$. We  denote by $\text{d} \Rc_{\Sigma}[\bs{a}]:\mathbb{P}_{\mathcal{T}}\rightarrow \mathbb{P}_{\Sigma}$ the differential of $\Rc_{\Sigma}$ with respect to $\bs{a}$, evaluated at a given $\bs{a} \in \mathbb{P}_{\mathcal{T}}$. Clearly, if $\Rc_\Sigma = \Lc_\Sigma$, we simply have $\text{d} \Rc_{\Sigma}[\bs{a}] = \Lc_\Sigma$. Moreover, we denote by $(\text{d} \Rc_{\Sigma}[\bs{a}])^*$ the adjoint of $\text{d} \Rc_{\Sigma}[\bs{a}]$, with respect to the two different scalar products. For the two reconstructions we consider, this operator is given by either $\Lc_\Sigma^*$ or $(\text{d} \mathcal{H}_\Sigma[\bs{a}])^*$, which are defined by
	\begin{equation}\label{eq:adjoint_means}
	( \Lc_\Sigma^* \bs{u})_K = \sum_{\sigma\in\Sigma_K} u_{\sigma} \frac{m_{\sigma}d_{K,\sigma}}{m_K}\,, \quad ((\text{d} \mathcal{H}_\Sigma[\bs{a}])^* \bs{u})_{K}= \sum_{\sigma\in\Sigma_K} \frac{( \mathcal{H}_\Sigma[\bs{a}])^2_{\sigma}}{a^2_K} u_{\sigma} \frac{m_{\sigma}d_{K,\sigma}}{m_K}\,,
	\end{equation}
	for any $\bs{u}\in \mathbb{P}_{\Sigma}$.
	Finally, for any fixed $\bs{a} \in\mathbb{P}_{\coarse{\Tc}}$,  we define the reconstruction operator on the coarse grid $\mathcal{R}_{\coarse{\Tc}}[\bs{a} ]:  \mathbb{P}_{\Sigma}\rightarrow \mathbb{P}_{\Tc'}$ by 
	\begin{equation}\label{eq:rcoarse}
	\mathcal{R}_{\coarse{\Tc}} [\bs{a}] \coloneqq \Ic^* \circ (\ed \Rc_{\Sigma} [ \mathcal{I}(\bs{a})])^* \,.
	\end{equation}
	
	\begin{remark}\label{rmk:boundary_edges}
	The space of discrete conservative fluxes and the reconstruction operator introduced above take only into account the interior edges. This is sufficient for our purposes due to the zero flux boundary conditions. In particular, since the flux should be zero at the boundary the reconstruction of the density on the exterior edges is not needed for the construction of the scheme.
	\end{remark}

	
	\subsection{Discrete problem}\label{ssec:scheme}

	Consider a discretization of the time interval $[0,1]$ in $N+1$ subintervals of constant length $\tau=\frac{1}{N+1}$, and let $t^k \coloneqq k \tau $ for all $k \in \{0,..,N+1\}$. We denote the time evolution of a discrete density by $\bs{\rho}\coloneqq (\bs{\rho}^k)_{k=0}^{N+1}$, where $\bs{\rho}^k \coloneqq (\rho_{\coarse{K}}^k)_{{\coarse{K}}\in\coarse{\mc{T}}}$. Similarly we denote by $\bs{F}\coloneqq (\bs{F}^k)_{k=1}^{N+1}$ the time evolution of a discrete momentum, where $\bs{F}^k \coloneqq ( F_{K,\sigma}^k,F_{L,\sigma}^k )_{\sigma\in\Sigma}$.
	
	Given a couple $(\brho,\bF)\in[\mathbb{P}_{\coarse{\Tc}}]^{N+2}\times[\mathbb{F}_{\Tc}]^{N+1}$, we define the discrete equivalent of the functional (\ref{eq:BB}),  $\Bc_{N,\Tc}:[\mathbb{P}_{\Tc}]^{N+2}\times[\mathbb{F}_{\Tc}]^{N+1} \rightarrow \mathbb{R}^+$, as follows:
	\begin{equation}\label{eq:BBh}
		\Bc_{N,\Tc}(\brho,\bF) := 
		\begin{cases}
		\sum_{k=1}^{N+1} \tau \sum_{\sigma\in\Sigma} B(((\Rc_{\Sigma} \circ \Ic) (\frac{\brho^k+\brho^{k-1}}{2}))_\sigma,F_{\sigma}^k) m_{\sigma}d_{\sigma}  &\text{if } \rho^k_{\coarse{K}}\ge0, \\
		+\infty &\text{else},
		\end{cases} 
	\end{equation}
	where $B$ is defined in equation \eqref{eq:bb}. Since $\mathcal{R}_\Sigma$ is assumed to be concave, the function (\ref{eq:BBh}) is convex and lower semi-continuous.

	Note that on each subinterval $[\tau (k-1), \tau k]$, the time integral of the kinetic energy is discretized using the midpoint rule. This implies that a given $F^k_\sigma$ needs to vanish only if the reconstruction of $(\brho^k+\brho^{k-1})/2$ on the same edge vanishes. Approximating the integral with a left/right-endpoint approximation would be more restrictive in this sense (see \cite{lavenant2019unconditional} for more details on this choice of time discretization).
	At each time step, the kinetic energy is discretized on the diamond cells of the finer grid.
	The measure of each diamond cell is taken $d$ times. This is done in order to compensate the unidirectional discretization of the vector field $\bFc$ and therefore obtain a consistent discretization (see, e.g., lemma \ref{lem:properties}). Indeed, each $F_{\sigma}$ is meant as an approximation of $|\bFc\cdot \bs{n}_{\sigma}|$ and encodes then the information of $\bFc$ only along the direction $\bs{n}_{\sigma}$. This choice is also linked to the definition of inflated gradient (see \cite{CC2003,EyGa2003} for more details on this construction).
		
	\begin{remark}
	Note that \eqref{eq:BBh} is not simply the discretization of \eqref{eq:BB} on the diamond cells, in which case the functional would take the value $+\infty$ whenever the time-space reconstruction of the density is negative on some diamond cell. The functional in \eqref{eq:BBh} takes the value $+\infty$ whenever the density is negative on some cell $\coarse{K}\in\coarse{\Tc}$, which is a stronger condition.
	\end{remark}
	
	Given two discrete densities $\brho^{in}, \brho^f \in \mb{P}^+_{\coarse{\Tc}}$, with the same total discrete mass $\sum_{\coarse{K}\in\coarse{\Tc}} \rho_{\coarse{K}}^{in} m_{\coarse{K}} = \sum_{\coarse{K}\in\coarse{\Tc}} \rho_{\coarse{K}}^f m_{\coarse{K}}$, we consider the following discrete version of problem \eqref{eq:geod}:
	\begin{equation}\label{eq:geodth}
\inf_{(\brho,\bF)\in \Cc_{N,\Tc}} \Bc_{N,\Tc} (\brho,\bF) 
	\end{equation}
	where $\Cc_{N,\Tc} \subset  [\mathbb{P}_{\coarse{\Tc}}]^{N+2}\times[\mathbb{F}_{\Tc}]^{N+1}$ is the convex subset whose elements $(\brho,\bF)$ satisfy both the discrete continuity equation
	\begin{equation}\label{eq:continuityth}
\Ic ( \frac{\brho^k-\brho^{k-1}}{\tau})+  \mathrm{div}_{\Tc} \bs{F}^k = 0 \,,  \quad \forall k\in\{1,..,N+1\} , 
	\end{equation}
	and the initial and final conditions
	\begin{equation}\label{eq:timebc}
	\brho^0=\brho^{in}, \quad \brho^{N+1}=\brho^f.
	\end{equation}
	The continuity equation is discretized in time using the midpoint rule ($\bF$ is indeed staggered in time with respect to $\brho$). Moreover, given the definition of the discrete space of conservative fluxes and the operator $\mathrm{div}_{\Tc}$, \eqref{eq:continuityth} is to be understood with zero flux boundary conditions in space. Hence equations (\ref{eq:continuityth})-(\ref{eq:timebc}) imply that the total discrete mass is preserved. In the following, we explicitly enforce the constraint (\ref{eq:timebc}), i.e. we identify $\brho^0$ and $\brho^{N+1}$ with $\brho^{in}$ and $\brho^f$, respectively.
	
	We derive now the first order optimality conditions for problem (\ref{eq:geodth}), which are necessary and sufficient conditions for a solution. We consider the minimization on $\brho$ to be taken only among non-negative densities. The Lagrangian associated with the constrained optimization problem \eqref{eq:geodth} is given by
	\begin{equation}
	\Lc_{N,\Tc}(\bphi,\brho,\bF) = \Bc_{N,\Tc}(\brho,\bF)+ \sum_{k=1}^{N+1} \tau \langle \bphi^k, \Ic ( \frac{\brho^k-\brho^{k-1}}{\tau})+  \mathrm{div}_{\Tc} \bs{F}^k \rangle_{\Tc} \,,
	\end{equation}
	where the potential $\bphi \in [\mb{P}_{\Tc}]^{N+1}$ is the Lagrange multiplier for the continuity equation constraint. The stationarity condition of $\Lc_{N,\Tc}$ with respect to $\bF$ gives
	\begin{equation}\label{eq:fluxes}
	\bs{F}^k = (\Rc_{\Sigma}\circ \Ic)(\frac{\brho^k+\brho^{k-1}}{2}) \odot \nabla_\Sigma  \bphi^k\,,  \quad \forall k \in \{1,..,N+1\},
	\end{equation}
	so that the Lagrangian reduces to
	\begin{equation}\label{eq:lag_rhophi}
	-\frac{\tau}{2} \sum_{k=1}^{N+1} \langle (\Rc_{\Sigma} \circ \Ic) (\frac{\brho^k+\brho^{k-1}}{2}), (\nabla_{\Sigma} \bphi^k)^2 \rangle_{\Sigma} + \sum_{k=1}^{N+1} \tau \langle \bphi^k, \Ic ( \frac{\brho^k-\brho^{k-1}}{\tau}) \rangle_\Tc \,.
	\end{equation}
	A stationary point of \eqref{eq:lag_rhophi} must then satisfy the conditions:
	\begin{equation}\label{eq:geodth_KKTineq}
	\left\{
	\begin{aligned}
	&\displaystyle \Ic (\frac{\brho^k-\brho^{k-1}}{\tau}) + \mathrm{div}_{\Tc} ((\Rc_{\Sigma}\circ \Ic)(\frac{\brho^k+\brho^{k-1}}{2}) \odot   \nabla_{\Sigma} \bphi^k) =0,  \\
	&\displaystyle \Ic^*  (\frac{\bphi^{k+1}-\bphi^{k}}{\tau}) + \frac{1}{4}  \Rc_{\coarse{\Tc}} [\frac{\brho^k+\brho^{k-1}}{2}] (\nabla_\Sigma\bphi^k)^2 + \frac{1}{4} \Rc_{\coarse{\Tc}}[\frac{\brho^{k+1}+\brho^k}{2}](\nabla_\Sigma\bphi^{k+1})^2  \le 0,	
	\end{aligned}
	\right.
	\end{equation}
	where $k \in \{1,..,N+1\}$ for the discrete continuity equation, $k \in \{1,..,N\}$ for the discrete Hamilton-Jacobi equation, and where by equation \eqref{eq:rcoarse}, the linear operator $\Rc_{\coarse{\Tc}} [\frac{\brho^k+\brho^{k-1}}{2}]: \mathbb{P}_\Sigma \rightarrow \mathbb{P}_{\coarse{\Tc}}$ is defined by
	\[
		\Rc_{\coarse{\Tc}} [\frac{\brho^k+\brho^{k-1}}{2}] =  \Ic^* \circ ( \mathrm{d} \Rc_{\Sigma}[\Ic (\frac{\brho^k+\brho^{k-1}}{2})])^*.
	\]
	If $\Rc_\Sigma = \Lc_\Sigma$, then this operator does not depend on $\brho$ and in particular we will drop such dependency in the notation by setting $\Rc_{\coarse{\Tc}} =\Ic^* \circ \Lc^*_\Sigma$. We emphasize that the discrete no flux boundary conditions are automatically enforced by the definition of the discrete fluxes (see also remark \ref{rmk:boundary_edges}).
	
	The inequality in the second condition derives from the fact that the minimization in $\brho$ is taken over non-negative values, and the equality holds where $\brho^k$ does not vanish. Hence, we can write the full system of optimality conditions using a slack variable $\blambda\in[\mb{P}^+_{\coarse{\Tc}}]^N$:
	\begin{equation}\label{eq:geodth_KKT}
	\left\{
	\begin{aligned}
	&\displaystyle \Ic (\frac{\brho^k-\brho^{k-1}}{\tau}) + \mathrm{div}_{\Tc} ((\Rc_{\Sigma}\circ \Ic)(\frac{\brho^k+\brho^{k-1}}{2}) \odot   \nabla_{\Sigma} \bphi^k) =0 , \\
	&\displaystyle \Ic^* (\frac{\bphi^{k+1}-\bphi^{k}}{\tau})+ \frac{1}{4} \Rc_{\coarse{\Tc}} [\frac{\brho^k+\brho^{k-1}}{2}] (\nabla_\Sigma\bphi^k)^2 + \frac{1}{4} \Rc_{\coarse{\Tc}}[\frac{\brho^{k+1}+\brho^k}{2}](\nabla_\Sigma\bphi^{k+1})^2  = \blambda^k , \\
	&\rho^k_{\coarse{K}} \ge0, \, \lambda^k_{\coarse{K}} \le 0, \, \rho^k_{\coarse{K}} \lambda^k_{\coarse{K}} = 0,
	\end{aligned}
	\right.
	\end{equation}
	where $k \in \{1,..,N+1\}$ for the discrete continuity equation and  $k \in \{1,..,N\}$ for the other conditions. Note that system \eqref{eq:geodth_KKT} is a discrete version of the system of optimality conditions \eqref{eq:geodesic} holding at the continuous level. In particular, the continuity equation is discretized on the fine grid whereas the Hamilton-Jacobi equation on the coarse one. Using a discretization that preserves the monotonocity of the discrete Hamilton-Jacobi operator it is possible to show that the value zero for $\blambda$ is optimal (see \cite{LJKO} for a problem closely related to \ref{eq:geodth}), i.e. the discrete Hamilton-Jacobi equation can be saturated.  However this is not the case for the discretizations we consider since they do not preserve the monotonicity.

	\begin{remark}\label{rmk:Iidentity}
	If the two discretizations of $\Omega$ coincide, $\Ic$ becomes the identity and we recover the finite volumes discretization already considered in \cite{lavenant2019unconditional}, which is a fully discrete version of the continuous-time discrete optimal transport problem studied in \cite{gladbach2018scaling}.
	\end{remark}
	
	\begin{remark}\label{rmk:uniqueness}
	Existence of a (finite) solution $(\brho,\bF)$ is not difficult to obtain, as the minimization in $\brho$ is taken over a compact set and one can show that $|\bF|$ is uniformly bounded for any minimizing sequence (by the same arguments as in the proof of theorem \eqref{thm:muconvergence} below). 
	The uniqueness of the solution, which is guaranteed for the continuous problem (\ref{eq:geod}) as soon as the initial (or final) density is absolutely continuous with respect to the Lebesgue measure, is not evident. System (\ref{eq:geodth_KKT}) is not guaranteed in general to have a unique solution. In particular, where the density vanishes, the potential and the positivity multiplier are clearly non unique. The potential is however uniquely defined, up to a global constant, if the density solution is unique and everywhere strictly positive.
	\end{remark}
	
	Given a solution $(\brho, \bphi)$ to system (\ref{eq:geodth_KKT}), we can construct the associated momentum $\bF$ by equation \eqref{eq:fluxes} so that $(\brho,\bF)$ is a minimizer of problem \eqref{eq:geodth}. Then, we define the discrete Wasserstein distance $W_{N,\Tc}(\brho^{in}, \brho^f)$ by 
	\begin{equation}\label{eq:geodthW2}
	\frac{W_{N,\Tc}^2(\brho^{in}, \brho^f)}{2} \coloneqq \Bc_{N,\Tc} (\brho,\bF) .
	\end{equation}
	More precisely, replacing \eqref{eq:fluxes} in \eqref{eq:geodthW2}, the discrete Wasserstein distance can be computed using the following expression:
	\begin{equation}\label{eq:discW2pot}
	\frac{W_{N,\Tc}^2(\brho^{in}, \brho^f)}{2} = \frac{\tau}{2} \sum_{k=1}^{N+1} \langle (\Rc_{\Sigma} \circ \Ic) (\frac{\brho^k+\brho^{k-1}}{2}), (\nabla_{\Sigma} \bphi^k)^2 \rangle_{\Sigma}\,.
	\end{equation}
	
	In the case of the linear reconstruction, i.e.\ taking $\mc{R}_\Sigma = \mc{L}_\Sigma$, one can also easily express the dual to problem \eqref{eq:geodth} in terms of the potential $\bphi$, as in the continuous case, i.e.\ problem \eqref{eq:dual}. In fact, in this case, replacing the second condition of system \eqref{eq:geodth_KKT} into the Lagrangian (\ref{eq:lag_rhophi}) we obtain the following problem:
	\begin{equation}\label{eq:dualproblem}
\sup_{\bphi \in \tilde{\Kc}_{N,\Tc}} \langle \Ic^* \bphi^{N+1} - \frac{\tau}{4}  \mathcal{R}_{\coarse{\Tc}} (\nabla_\Sigma \bphi^{N+1})^2, \brho^f \rangle_\Tc - \langle \Ic^* \bphi^{1} + \frac{\tau}{4}  \mathcal{R}_{\coarse{\Tc}} (\nabla_\Sigma \bphi^{1})^2, \brho^{in} \rangle_\Tc
	\end{equation}
	where $\mathcal{R}_{\coarse{\Tc}} = \mc{I}^* \circ \mathcal{L}_{\Sigma}^*$ and $\Kc_{N,\Tc} \subset [\mathbb{P}_{\Tc}]^{N+1}$ is the convex subset of potentials $\bphi$ verifying
	\[
	\displaystyle \Ic^* (\frac{\bphi^{k+1}-\bphi^{k}}{\tau}) + \frac{1}{4} \mathcal{R}_{\coarse{\Tc}} ((\nabla_\Sigma\bphi^k)^2 +(\nabla_\Sigma\bphi^{k+1})^2 ) \leq 0\,.
	\]
	
	
\section{Convergence to the continuous problem}\label{sec:convergence}

In this section, we provide quantitative estimates for the convergence of the action and the discrete potential $\bs{\phi}$ towards their continuous counterparts, in the case of solutions with smooth strictly positive densities. Note that we restrict ourselves to the case of the linear reconstruction operator, i.e.\ we take $\Rc_{\Sigma} = \Lc_{\Sigma}$.
As a consequence of remark \ref{rmk:Iidentity}, these results are also valid for the finite volume discretization considered in \cite{lavenant2019unconditional}.
	
First of all, we introduce some additional notation.	
Let $\bs{F},\bs{G} \in [\mathbb{F}_\Tc]^{N+1}$ and $\bs{\rho}\in [\mathbb{P}_{\coarse{\Tc}}^+]^{N+2}$. We define the following weighted inner products:
\begin{equation}\label{eq:spacetimeprod}
\langle \bs{F},\bs{G}\rangle_{\bs{\rho}} \coloneqq  \tau \sum_{k=1}^{N+1} \langle \bs{F}^k,\bs{G}^k\rangle_{\frac{\bs{\rho}^k +\bs{\rho}^{k-1}}{2}} \, ,
\end{equation}
where
\[
\langle \bs{F}^k,\bs{G}^k\rangle_{\bs{\rho}^k} \coloneqq \sum_{\sigma\in \Sigma} (F^k_{K,\sigma} G^k_{K,\sigma} + F^k_{L,\sigma} G^k_{L,\sigma})((\mc{R}_\Sigma \circ \Ic) \bs{\rho}^k)_\sigma \frac{m_\sigma d_\sigma}{2}\,.
\]
We will denote by $\| \cdot \|_{\bs{\rho}}$ and $\| \cdot\|_{\bs{\rho}^k}$ the (semi-)norms associated with these (semi-)inner products.

We will consider two sampling operators: one for the density  $\Pi_{\coarse{\mc{T}}}:L^1(\Omega) \rightarrow \mathbb{P}_{\coarse{\mc{T}}} $, which performs an average on each cell, and one for the potential  $\Pi_{\mc{T}}:C^0(\Omega) \rightarrow \mathbb{P}_{\mc{T}}$, which evaluates the function at the cell center. More precisely, given $f\in L^1(\Omega)$ and $g\in C^0(\Omega)$, we define
\[
(\Pi_{\coarse{\mc{T}}} f)_{\coarse{K}} \coloneqq \frac{1}{m_{\coarse{K}}} \int_{\coarse{K}} f \,\ed x \,,\quad (\Pi_{\mc{T}} g)_K \coloneqq g(\x_K) \,,
\]
for all $\coarse{K} \in\coarse{\Tc}$ and $K\in \Tc$. 
For any time dependent functions $\rho \in C^0([0,1], L^1(\Omega))$ and $\phi \in C^0([0,1]\times\Omega)$  we define $\overline{\Pi}_{\coarse{\mc{T}}} {\rho}\coloneqq (\Pi_{\coarse{\mc{T}}} \rho(t^k,\cdot))_{k=0}^{N+1}$ and
\[
\overline{\Pi}_{\mc{T}} \phi \coloneqq \left(\frac{1}{\tau} \int_{t^{k-1}}^{t^{k}} \Pi_\mc{T} \phi(s,\cdot)\ed s \right)_{k=1}^{N+1}\,.
\]

We will denote by $h$ the maximum cell diameter of the fine mesh, i.e.\ $h \coloneqq \max_{K\in\Tc} \mathrm{diam}(K)$.
We will assume two regularity conditions on the fine mesh.
Firstly, there exists a constant $\zeta$, which does not depend on $h$, such that
\begin{equation}\label{eq:mesh_reg}
\mathrm{diam}(K) \leq \zeta d_{\sigma} \leq \zeta^2 \mathrm{diam}(K), \quad \forall \sigma \in \Sigma_K, \; \forall K\in\Tc \,;
\end{equation}
\begin{equation}\label{eq:mesh_reg1}
\mathrm{dist}(\x_K,K) \leq \zeta \,\mathrm{diam}(K), \quad \forall K\in\Tc \,.
\end{equation}
Secondly, there exists a constant $\eta_h>0$ only depending on $h$, with $\eta_h\rightarrow 0$ for $h\rightarrow 0$, such that
\begin{equation}\label{eq:asyiso}
\sum_{\sigma \in \Sigma_K} m_\sigma d_{K,\sigma}  \bs{n}_{K,\sigma} \otimes \bs{n}_{K,\sigma}   \leq  m_K ( 1+ \eta_h ) \mathrm{Id} \,, \quad \forall K\in\Tc \,.
\end{equation}
The latter condition is essentially a specific instance of the asymptotic isotropy condition in \cite{gladbach2018scaling} (see Definition 1.3). 
When the cell centers $\x_K$ are chosen as the circumcenters of the associated cell (as in the particular examples of meshes described in section \ref{ssec:mesh}), a stronger property holds, which has been referred to as center of mass condition \cite{gladbach2018scaling} or superadmissibility \cite{Eymard2009admissible}, and which reads as follows:
\begin{equation}\label{eq:asyiso2}
\sum_{\sigma \in \Sigma_K} m_\sigma d_{K,\sigma}  \bs{n}_{K,\sigma} \otimes \bs{n}_{K,\sigma}   =  m_K  \mathrm{Id} \,.
\end{equation}
However, for generality of the discussion, in the following we will only require \eqref{eq:asyiso} and therefore we will keep the dependence on $\eta_h$ explicit.

The following lemma collects some consistency properties of the projection ${\Pi}_{\mc{T}}$. In particular, point (3) below shows that the asymptotic isotropy condition implies the consistency of the quadratic term in the discrete Wasserstein distance \eqref{eq:discW2pot}, and justifies our discretization of the functional $\Bc_{N,\Tc}$. 

\begin{lemma}\label{lem:properties} The following properties hold:
\begin{enumerate}
\item for any $\psi\in C^0(\Omega)$,  $\max_{K\in \Tc} |(\Pi_\mathcal{T} \psi)_{K} | \leq \|\psi\|_{C^0}$; 
\item for any $\psi \in C^{0,1}(\Omega)$, there exists a constant $C>0$ only depending on $\psi$ and $\zeta$ such that
\[
\max_{K\in\Tc} \| ({\Pi}_{\mc{T}} \psi)_K - \psi \|_{C^0(K)}\leq C h\,;
\] 
\item  for any $\psi \in C^{1,1}(\Omega)$, there exists a constant $C>0$ only depending on $\psi$ and $\zeta$ such that
\[
 (\mathcal{L}_\Sigma^* |\nabla_\Sigma  {\Pi}_{\mc{T}} \psi|^2 )_K \leq (\Pi_{\Tc} |\nabla \psi|^2)_K + C (h +\eta_h) \,, 
\] 
for all $K\in \Tc$, where $\mc{L}_\Sigma$ is the linear reconstruction operator and $\eta_h$ is defined as in \eqref{eq:asyiso}.
\end{enumerate}
\end{lemma}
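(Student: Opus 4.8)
The three statements are standard consistency estimates for the nodal sampling $\Pi_{\mc{T}}$, and the plan is to prove them in increasing order of difficulty, using the convexity of $\Omega$ and the mesh regularity hypotheses \eqref{eq:mesh_reg}--\eqref{eq:asyiso}. Point (1) is immediate: $(\Pi_{\mc{T}}\psi)_K = \psi(\x_K)$ and $\x_K \in \Omega$, so $|(\Pi_{\mc{T}}\psi)_K| \le \sup_\Omega |\psi| = \|\psi\|_{C^0}$. For point (2), I would fix $K \in \Tc$ and $x \in K$; since $\Omega$ is convex and $\psi$ is Lipschitz on it, $|(\Pi_{\mc{T}}\psi)_K - \psi(x)| = |\psi(\x_K)-\psi(x)| \le \mathrm{Lip}(\psi)\,|\x_K - x|$, and by the triangle inequality combined with \eqref{eq:mesh_reg1}, $|\x_K - x| \le \mathrm{dist}(\x_K,K) + \mathrm{diam}(K) \le (1+\zeta)\mathrm{diam}(K) \le (1+\zeta)h$; taking the supremum over $x\in K$ and then over $K$ gives (2) with $C = (1+\zeta)\mathrm{Lip}(\psi)$.

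For point (3), the substantive part, I would first write $\mc{L}_\Sigma^*$ explicitly as in \eqref{eq:adjoint_means}, so that
\[
(\mc{L}_\Sigma^* |\nabla_\Sigma \Pi_{\mc{T}}\psi|^2)_K = \frac{1}{m_K}\sum_{\sigma = K|L \in \Sigma_K} \Big(\frac{\psi(\x_L)-\psi(\x_K)}{d_\sigma}\Big)^2 m_\sigma d_{K,\sigma}\,,
\]
and then Taylor-expand $\psi$ at $\x_K$ along each segment $[\x_K,\x_L]\subset\Omega$. Using $\x_L - \x_K = d_\sigma \bs{n}_{K,\sigma}$ and the $C^{1,1}$ bound on $\psi$ for the remainder, one gets $\big|\tfrac{\psi(\x_L)-\psi(\x_K)}{d_\sigma} - \nabla\psi(\x_K)\cdot \bs{n}_{K,\sigma}\big| \le \tfrac12\mathrm{Lip}(\nabla\psi)\,d_\sigma \le \tfrac12\mathrm{Lip}(\nabla\psi)\,\zeta h$ by \eqref{eq:mesh_reg}; squaring, expanding, and using $|\nabla\psi(\x_K)\cdot\bs{n}_{K,\sigma}|\le\|\nabla\psi\|_{C^0}$ bounds each difference quotient squared by $(\nabla\psi(\x_K)\cdot\bs{n}_{K,\sigma})^2 + C_1 h$. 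Summing against $m_\sigma d_{K,\sigma}/m_K$, the leading term is the quadratic form $\tfrac1{m_K}\nabla\psi(\x_K)^\top\big(\sum_{\sigma\in\Sigma_K} m_\sigma d_{K,\sigma}\,\bs{n}_{K,\sigma}\otimes\bs{n}_{K,\sigma}\big)\nabla\psi(\x_K)$, which \eqref{eq:asyiso} bounds by $(1+\eta_h)|\nabla\psi(\x_K)|^2 = (1+\eta_h)(\Pi_{\mc{T}}|\nabla\psi|^2)_K$, while the remainder equals $\tfrac{C_1 h}{m_K}\sum_{\sigma\in\Sigma_K} m_\sigma d_{K,\sigma}$, which is $O(h)$ once one observes that taking the trace in \eqref{eq:asyiso} (recall $|\bs{n}_{K,\sigma}|=1$) yields $\sum_{\sigma\in\Sigma_K} m_\sigma d_{K,\sigma} \le d(1+\eta_h)m_K$. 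Combining this with $\eta_h(\Pi_{\mc{T}}|\nabla\psi|^2)_K \le \eta_h\|\nabla\psi\|_{C^0}^2$ gives (3).

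I expect point (3) to be the only delicate part. The main trap to avoid is that this cannot be phrased as a monotonicity argument for $\mc{L}_\Sigma^*$ acting on $\mathbb{P}_\Sigma$, because the natural pointwise majorant $(\nabla\psi(\x_K)\cdot\bs{n}_{K,\sigma})^2$ is not symmetric under $K\leftrightarrow L$ and is therefore not a well-defined element of $\mathbb{P}_\Sigma$ — one must expand $\mc{L}_\Sigma^*$ at the fixed cell $K$ and Taylor-expand around the matching center $\x_K$. The other point requiring care is that both the sharp factor $1+\eta_h$ in front of the consistent term and the $O(h)$ size of the remainder must be extracted from the single structural assumption \eqref{eq:asyiso}, used once as a Loewner inequality and once via its trace as the volume bound $\sum_{\sigma\in\Sigma_K}m_\sigma d_{K,\sigma}\lesssim m_K$. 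The constants $C_1$ and the final $C$ depend only on $\psi$ and $\zeta$ (and implicitly on a fixed upper bound for $h$, which is harmless in the regime $h\to0$).
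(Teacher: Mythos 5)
Your proof is correct and follows essentially the same route as the paper: points (1)--(2) by direct evaluation plus \eqref{eq:mesh_reg1}, and point (3) by writing out $(\mc{L}_\Sigma^*|\nabla_\Sigma\Pi_\Tc\psi|^2)_K=\sum_{\sigma\in\Sigma_K}|\nabla_\sigma\Pi_\Tc\psi|^2\,m_\sigma d_{K,\sigma}/m_K$, bounding each difference quotient by $|\nabla\psi(\x_K)\cdot\bs{n}_{K,\sigma}|+Ch$ along the segment $[\x_K,\x_L]$, squaring, and invoking the asymptotic isotropy condition \eqref{eq:asyiso}. Your explicit trace argument for $\sum_{\sigma\in\Sigma_K}m_\sigma d_{K,\sigma}\lesssim m_K$ just spells out what the paper covers with ``neglecting higher order terms,'' so there is nothing to add.
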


\begin{proof}
The first two points follow easily from the definition of $\Pi_\Tc$ and the regularity condition \eqref{eq:mesh_reg1}. For (3), observe that, by definition of the linear reconstruction operator,
\begin{equation}\label{eq:lstargrad}
(\mathcal{L}^*_\Sigma |\nabla_\sigma  {\Pi}_{\mc{T}} \psi|^2 )_K =  \sum_{\sigma \in \Sigma_K} |\nabla_\sigma  {\Pi}_{\mc{T}} \psi|^2 \frac{m_\sigma d_{K,\sigma}}{m_K} \,.
\end{equation}
Then, using the definition of the operator $\Pi_\Tc$ and the regularity condition \eqref{eq:mesh_reg},
\[
\begin{aligned}
\nabla_\sigma  {\Pi}_{\mc{T}} \psi =  \left| \frac{\psi(\x_K)-\psi(\x_L)}{d_{\sigma}} \right|
=\frac{1}{d_\sigma} \left| \int_0^1 \frac{\ed}{\ed s} \psi ((1-s) \x_K + s \x_L) \ed s \right|
\leq \left|  \nabla \psi( x_K) \cdot \bs{n}_{K,\sigma} \right| + C h\,.
\end{aligned}
\]
Replacing this into \eqref{eq:lstargrad}, neglecting higher order terms, and using the asymptotic isotropy assumption \eqref{eq:asyiso}, we obtain the desired bound.
\end{proof}

Propostion \ref{prop:elliptic} below is an adaptation to our setting of standard approximation results for elliptic problems. It quantifies the consistency of the projection $\overline{\Pi}_{\coarse{\mc{T}}}$ in terms of the associated potential. As in \cite{gladbach2018scaling}, we will use it to construct an admissible competitor for the discrete problem. Before proving the result, we state the following classical finite-volume version of the Poincaré inequality.

\begin{lemma}[Discrete mean Poincar\'e inequality, Lemma 10.2 in \cite{EGH00}] \label{lem:poincare}
There exists a constant $C>0$, only depending on $\Omega$, such that for all admissible
meshes $\Tc$, and for all $\bs{\psi} \in \mathbb{P}_\Tc$, the following inequality holds:
\[
\| \bs{\psi} - \frac{1}{|\Omega|} \sum_{K\in\Tc} \psi_K m_K \|_{\Tc} \leq C \|\nabla_\Sigma \bs{\psi}\|_{\mathbb{F}_\Tc} \,.
\]
\end{lemma}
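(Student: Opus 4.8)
The plan is the classical finite-volume argument behind this inequality (see \cite[Lemma~10.2]{EGH00}), which combines a variance identity with a ``telescoping along segments'' estimate. Set $m_{\bs{\psi}}:=\frac{1}{|\Omega|}\sum_{K\in\Tc}\psi_K m_K$, and for $x\in\Omega$ let $K_x\in\Tc$ be the a.e.\ unique cell containing $x$; we freely identify $\bs{\psi}\in\mathbb{P}_{\Tc}$ with the piecewise constant function $x\mapsto\psi_{K_x}$. Expanding the square yields the algebraic identity
\[
\|\bs{\psi}-m_{\bs{\psi}}\|_\Tc^2=\sum_{K\in\Tc}(\psi_K-m_{\bs{\psi}})^2 m_K=\frac{1}{2|\Omega|}\int_\Omega\!\int_\Omega\bigl(\psi_{K_x}-\psi_{K_y}\bigr)^2\,\ed x\,\ed y\,.
\]
For a.e.\ pair $(x,y)$ the open segment $(x,y)$ meets the interior faces $\sigma\in\Sigma$ only transversally and avoids the lower-dimensional skeleton of the mesh, so $\bs{\psi}$, being constant on each cell, varies only across the faces it crosses, whence
\[
|\psi_{K_x}-\psi_{K_y}|\le\sum_{\sigma\in\Sigma}\chi_\sigma(x,y)\,|D_\sigma\psi|\,,\qquad D_\sigma\psi:=\psi_L-\psi_K\ \text{ for }\sigma=K|L,
\]
where $\chi_\sigma(x,y)\in\{0,1\}$ records whether $[x,y]$ crosses $\sigma$.

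The next step is a Cauchy--Schwarz split with weights adapted to the direction $\xi:=(y-x)/|y-x|$. Writing $a_\sigma:=d_\sigma/|\bs{n}_{K,\sigma}\!\cdot\!\xi|$ and splitting $\chi_\sigma|D_\sigma\psi|=(\chi_\sigma a_\sigma)^{1/2}\,(\chi_\sigma|D_\sigma\psi|^2/a_\sigma)^{1/2}$ yields
\[
\bigl(\psi_{K_x}-\psi_{K_y}\bigr)^2\le\Bigl(\sum_{\sigma}\chi_\sigma(x,y)\,a_\sigma\Bigr)\Bigl(\sum_{\sigma}\chi_\sigma(x,y)\,\frac{|D_\sigma\psi|^2\,|\bs{n}_{K,\sigma}\!\cdot\!\xi|}{d_\sigma}\Bigr).
\]
I would bound the first factor by a constant depending only on $\mathrm{diam}(\Omega)$: here the TPFA orthogonality $\x_L-\x_K\parallel\bs{n}_{K,\sigma}$ is essential, as it makes $a_\sigma$ comparable to the $\xi$-length the segment travels while passing through the diamond cell of $\sigma$, and these lengths add up, with controlled overlap, to at most $\mathrm{diam}(\Omega)$. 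Then I would integrate over $(x,y)\in\Omega\times\Omega$, pull out the bounded first factor, exchange summation and integration, pass to polar coordinates $y=x+r\xi$, and use the elementary ``swept volume'' bound $\int_\Omega\chi_\sigma(x,x+r\xi)\,\ed x\le r\,m_\sigma|\bs{n}_{K,\sigma}\!\cdot\!\xi|$; integrating in $r\in(0,\mathrm{diam}(\Omega))$ and in $\xi$ over the unit sphere then cancels the factor $|\bs{n}_{K,\sigma}\!\cdot\!\xi|/d_\sigma$, up to a dimensional constant and a power of $\mathrm{diam}(\Omega)$, and collapses the $\sigma$-sum to
\[
\|\bs{\psi}-m_{\bs{\psi}}\|_\Tc^2\le C(\Omega)\sum_{\sigma\in\Sigma}\frac{m_\sigma}{d_\sigma}|D_\sigma\psi|^2=C(\Omega)\,\|\nabla_\Sigma\bs{\psi}\|_{\mathbb{F}_\Tc}^2\,,
\]
using $|\nabla_\sigma\psi|=|D_\sigma\psi|/d_\sigma$. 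Tracking constants through the variance identity (the $1/|\Omega|$), the swept-volume estimate, and the sphere integral shows that $C$ depends only on $|\Omega|$ and $\mathrm{diam}(\Omega)$, hence only on $\Omega$, and in particular not on the mesh.

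The main obstacle is to establish, \emph{uniformly over all admissible meshes}, the bound on the first Cauchy--Schwarz factor, i.e.\ that a fixed segment ``sees'' total weight $\sum_\sigma\chi_\sigma(x,y)\,a_\sigma\le C(\Omega)$ no matter how fine or anisotropic the mesh is; the difficulty is concentrated on faces crossed almost tangentially (small $|\bs{n}_{K,\sigma}\!\cdot\!\xi|$), which is precisely why the weights carry that factor — a near-tangential crossing forces the segment to spend a correspondingly long time inside the two adjacent cells, so those contributions cannot pile up beyond $\mathrm{diam}(\Omega)$. Everything else — the variance identity, the Fubini exchange, the swept-volume computation, and the final rewriting in terms of $\|\nabla_\Sigma\bs{\psi}\|_{\mathbb{F}_\Tc}$ — is routine bookkeeping; as in \cite{EGH00}, the cleanest way to organize the geometric estimates is to integrate first over segments of a single fixed direction and only afterwards average over directions, which reduces the swept-volume computation to a one-dimensional one in each slab.
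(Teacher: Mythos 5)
The paper itself does not prove this lemma---it is quoted directly from \cite{EGH00} (Lemma 10.2)---so your sketch has to be measured against that classical argument, whose overall architecture (variance identity, telescoping of $\bs{\psi}$ across crossed faces, Cauchy--Schwarz, swept-volume estimate in polar coordinates) you reproduce correctly. The genuine gap sits exactly in the step you flag as the main obstacle, and as written it is fatal: with your weight $a_\sigma=d_\sigma/|\bs{n}_{K,\sigma}\cdot\xi|$ the first Cauchy--Schwarz factor is \emph{not} bounded by a constant depending only on $\Omega$, not even termwise. For a fixed admissible mesh and a fixed face $\sigma$, the set of pairs $(x,y)$ whose segment crosses $\sigma$ with $|\bs{n}_{K,\sigma}\cdot\xi|<\delta$ has positive measure for every $\delta>0$, and for such pairs the single term $d_\sigma/|\bs{n}_{K,\sigma}\cdot\xi|>d_\sigma/\delta$ already exceeds any prescribed constant. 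The heuristic you offer to save it is also incorrect: a near-tangential crossing does not force the segment to spend a length comparable to $d_\sigma/|\bs{n}_{K,\sigma}\cdot\xi|$ inside the two adjacent cells---its sojourn there is at most the sum of the two cell diameters, which is unrelated to (and in general far smaller than) $d_\sigma/|\bs{n}_{K,\sigma}\cdot\xi|$. Since your plan is to pull this factor out of the double integral as a pointwise bound, the argument collapses at this point.

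The classical proof uses the \emph{reciprocal} weight, and that is the fix: split $\chi_\sigma|D_\sigma\psi|=\bigl(\chi_\sigma\, d_\sigma|\bs{n}_{K,\sigma}\cdot\xi|\bigr)^{1/2}\bigl(\chi_\sigma\,|D_\sigma\psi|^2/(d_\sigma|\bs{n}_{K,\sigma}\cdot\xi|)\bigr)^{1/2}$. The first factor is then bounded by $\mathrm{diam}(\Omega)$ as a direct consequence of admissibility: when the segment crosses $\sigma=K|L$ from $K$ to $L$ one has $\bs{n}_{K,\sigma}\cdot\xi>0$ and $\x_L-\x_K=d_\sigma\bs{n}_{K,\sigma}$, so $d_\sigma|\bs{n}_{K,\sigma}\cdot\xi|=(\x_L-\x_K)\cdot\xi>0$, and summing these increments along the ordered sequence of crossed faces telescopes to $(\x_{K_y}-\x_{K_x})\cdot\xi\leq\mathrm{diam}(\Omega)$; near-tangential crossings are now harmless precisely because they contribute little rather than much. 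The rest of your computation goes through unchanged, and in fact more cleanly: the factor $|\bs{n}_{K,\sigma}\cdot\xi|$ produced by the swept-volume bound cancels exactly the one in the denominator of the second factor, giving $C(\Omega)\sum_{\sigma\in\Sigma}(m_\sigma/d_\sigma)|D_\sigma\psi|^2=C(\Omega)\|\nabla_\Sigma\bs{\psi}\|^2_{\mathbb{F}_\Tc}$ after integrating in $r$ and $\xi$. One further remark: your variance-plus-segments argument implicitly requires $[x,y]\subset\Omega$, i.e.\ convexity of $\Omega$; this is fine here since the paper assumes $\Omega$ convex, but the additional length of the proof in \cite{EGH00} is due precisely to handling general connected polygonal domains.
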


\begin{proposition}\label{prop:elliptic}
Suppose that $\rho,\partial_t \rho\in L^{\infty}([0,1],C^{0,1}(\Omega))$, with $\rho\geq \varepsilon >0$, and let $\phi \in L^{\infty}([0,1],C^{1,1}(\Omega))$ be a solution of
\begin{equation}\label{eq:laplacian}
 - \mathrm{div}(\rho \nabla \phi) = \partial_t \rho  \,,\quad \nabla \phi \cdot n_{\partial \Omega} =   0\, \text{ on } \partial \Omega\,.
\end{equation}
Let $\bs{\rho} = \overline{\Pi}_{\coarse{\mc{T}}} \rho$ and let $\bs{\phi}$ be a solution of 
\[
- \mathrm{div}_{\Tc} ((\Lc_{\Sigma}\circ \Ic)(\frac{\brho^k+\brho^{k-1}}{2}) \odot   \nabla_{\Sigma} \bphi^k) =\Ic (\frac{\brho^k-\brho^{k-1}}{\tau}) \,.
\]
Then, there exists a constant $C>0$ depending only on $\phi$, $\rho$, $\varepsilon$, $\zeta$ and $\Omega$, such that
\begin{equation}\label{eq:ellipticest}
\|\nabla_\Sigma \bs{\phi}\|_{\bs{\rho}}^2 \leq  \int_{0}^{1} \int_\Omega  {\rho} |\nabla \phi|^2\, \ed x  \ed t + C(h + \tau + \eta_h) \,,
\end{equation}
with $\eta_h$ defined as in \eqref{eq:asyiso}.

\end{proposition}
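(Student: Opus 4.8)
The plan is to test the discrete elliptic equation against its own solution and to bound the resulting energy by that of an admissible discrete flux built from the continuous solution, in the spirit of the competitor constructions of \cite{gladbach2018scaling}. Throughout set $\bs m^k := (\Lc_\Sigma\circ\Ic)(\frac{\brho^k+\brho^{k-1}}{2})\in\mathbb{P}_\Sigma$ and $\bs g^k := \Ic(\frac{\brho^k-\brho^{k-1}}{\tau})\in\mathbb{P}_\Tc$; since $\rho\geq\varepsilon>0$ and $\Lc_\Sigma$ is a componentwise convex combination while $\Ic$ just copies cell values, one has $\varepsilon\leq m^k_\sigma\leq\|\rho\|_{L^\infty}$, and $\langle\bs g^k,\mathbf 1\rangle_\Tc=0$ because $\int_\Omega\rho(t,\cdot)$ is conserved in \eqref{eq:laplacian}, so the discrete problems below are uniquely solvable up to an irrelevant additive constant. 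Testing the equation for $\bphi^k$ against $\bphi^k$ and using that $\nabla_\Sigma$ is minus the adjoint of $\mathrm{div}_\Tc$ gives $\langle\bs m^k\odot\nabla_\Sigma\bphi^k,\nabla_\Sigma\bphi^k\rangle_{\mathbb{F}_\Tc}=\langle\bs g^k,\bphi^k\rangle_\Tc$; summing with weights $\tau$ and recalling that $\Lc_\Sigma$ is linear, the left-hand side is exactly $\|\nabla_\Sigma\bphi\|_{\brho}^2$. Hence, if $\bF^k\in\mathbb{F}_\Tc$ is \emph{any} flux with $-\mathrm{div}_\Tc\bF^k=\bs g^k$, then rewriting $\langle\bs g^k,\bphi^k\rangle_\Tc=\langle\bF^k,\nabla_\Sigma\bphi^k\rangle_{\mathbb{F}_\Tc}$ and applying the Cauchy--Schwarz inequality for the $\bs m^k$-weighted inner product yields
\[
\langle\bs m^k\odot\nabla_\Sigma\bphi^k,\nabla_\Sigma\bphi^k\rangle_{\mathbb{F}_\Tc}\ \leq\ \sum_{\sigma\in\Sigma}\frac{(F^k_\sigma)^2}{m^k_\sigma}\,m_\sigma d_\sigma\,,
\]
so it suffices to produce admissible fluxes $\bF^k$ whose right-hand sides, summed with weights $\tau$, are controlled by the right-hand side of \eqref{eq:ellipticest}.

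For the competitor, set $G:=\rho\nabla\phi$, which is bounded and Lipschitz in space uniformly in time since $\rho,\nabla\phi\in L^\infty([0,1],C^{0,1}(\Omega))$ and $\rho\geq\varepsilon$, and put, for $\sigma=K|L\in\Sigma$,
\[
F^k_{K,\sigma}:=\frac{1}{\tau m_\sigma}\int_{t^{k-1}}^{t^k}\!\int_\sigma G(t,y)\cdot\bs n_{K,\sigma}\,\ed\mathcal{H}^{d-1}(y)\,\ed t\,,
\]
which is antisymmetric in $(K,L)$, hence $\bF^k\in\mathbb{F}_\Tc$. By the divergence theorem on each $K\in\Tc$, using $\nabla\phi\cdot n_{\partial\Omega}=0$ on boundary faces and $-\mathrm{div}\,G=\partial_t\rho$, one finds $-\mathrm{div}_\Tc\bF^k=\tilde{\bs g}^k$ with $\tilde g^k_K=\frac1{\tau m_K}\int_K(\rho(t^k,\cdot)-\rho(t^{k-1},\cdot))\,\ed x$, i.e.\ the \emph{fine}-cell average of the time increment of $\rho$, whereas $\bs g^k$ is its \emph{coarse}-cell average; since $\partial_t\rho\in L^\infty([0,1],C^{0,1}(\Omega))$ the two differ pointwise by at most $C\,\mathrm{diam}(\coarse K)$, so $\|\bs g^k-\tilde{\bs g}^k\|_\Tc\leq Ch$ (this vanishes when $\coarse\Tc=\Tc$, and is $O(h)$ for the nested meshes of Section \ref{ssec:mesh}, whose coarse cells have diameter $O(h)$), and $\langle\bs g^k-\tilde{\bs g}^k,\mathbf 1\rangle_\Tc=0$. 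We therefore correct $\bF^k$ by $\delta\bF^k:=\nabla_\Sigma\bs\psi^k$, where $\bs\psi^k$ solves the plain discrete Poisson problem $-\mathrm{div}_\Tc\nabla_\Sigma\bs\psi^k=\bs g^k-\tilde{\bs g}^k$; testing against $\bs\psi^k$ (normalized to zero mean) and invoking the discrete mean Poincar\'e inequality of Lemma \ref{lem:poincare} gives $\|\delta\bF^k\|_{\mathbb{F}_\Tc}\leq C\|\bs g^k-\tilde{\bs g}^k\|_\Tc\leq Ch$, and $\bF^k+\delta\bF^k$ is admissible. Since $m^k_\sigma\geq\varepsilon$ and $\|\bF^k\|_{\mathbb{F}_\Tc}\leq C$ (because $|F^k_\sigma|\leq\|G\|_{L^\infty}$ and $\sum_\sigma m_\sigma d_\sigma\leq d|\Omega|$), the cross term and the $\delta\bF^k$ term contribute at most $O(h)$ to $\tau\sum_k\langle(\bs m^k)^{-1}\odot(\bF^k+\delta\bF^k),\bF^k+\delta\bF^k\rangle_{\mathbb{F}_\Tc}$.

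It remains to estimate the main term $\tau\sum_k\sum_\sigma\frac{(F^k_\sigma)^2}{m^k_\sigma}m_\sigma d_\sigma$. Using that $G$ is Lipschitz in space, that $\x_K$ lies within $\zeta h$ of $K$ by \eqref{eq:mesh_reg1}, Jensen's inequality for the time average, and the bound $m^k_\sigma\geq\rho(t,\x_K)-C(h+\tau)$ valid for $t\in[t^{k-1},t^k]$ (which follows from the Lipschitz regularity of $\rho$ in space, boundedness of $\partial_t\rho$, and $\mathrm{diam}(\coarse K)=O(h)$), one gets for $\sigma=K|L$
\[
\frac{(F^k_\sigma)^2}{m^k_\sigma}\ \leq\ \frac1\tau\int_{t^{k-1}}^{t^k}\rho(t,\x_K)\,\big|\nabla\phi(t,\x_K)\cdot\bs n_{K,\sigma}\big|^2\,\ed t\ +\ C(h+\tau)\,.
\]
Multiplying by $m_\sigma d_\sigma$, splitting $m_\sigma d_\sigma=m_\sigma d_{K,\sigma}+m_\sigma d_{L,\sigma}$ so as to regroup the sum over cells (which costs another $O(h)$ because $\rho|\nabla\phi|^2$ is Lipschitz), and invoking the asymptotic isotropy assumption \eqref{eq:asyiso} in the form $\sum_{\sigma\in\Sigma_K}|\nabla\phi(t,\x_K)\cdot\bs n_{K,\sigma}|^2 m_\sigma d_{K,\sigma}\leq m_K(1+\eta_h)|\nabla\phi(t,\x_K)|^2$ — the same computation as in point (3) of Lemma \ref{lem:properties} — together with $\sum_K(\rho|\nabla\phi|^2)(t,\x_K)m_K\leq\int_\Omega(\rho|\nabla\phi|^2)(t,\cdot)\,\ed x+Ch$ and $\sum_\sigma m_\sigma d_\sigma\leq d|\Omega|$ (which absorbs all $O(h+\tau)$ remainders), integration in $t$ and collecting the estimates above gives $\|\nabla_\Sigma\bphi\|_{\brho}^2\leq(1+\eta_h)\int_0^1\!\int_\Omega\rho|\nabla\phi|^2+C(h+\tau+\eta_h)$; absorbing $\eta_h\int_0^1\!\int_\Omega\rho|\nabla\phi|^2$ into the error yields \eqref{eq:ellipticest}.

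The main obstacle is the consistency estimate of the last step: the chain of Lipschitz/Jensen replacements has to be done carefully, and above all one must turn the \emph{anisotropic} quadratic form $\sum_{\sigma\in\Sigma_K}|\nabla\phi(\x_K)\cdot\bs n_{K,\sigma}|^2 m_\sigma d_{K,\sigma}$ into the isotropic $m_K|\nabla\phi(\x_K)|^2$, which is exactly where the mesh geometry enters and what forces the $\eta_h$ in the estimate (it becomes an equality under the center-of-mass condition \eqref{eq:asyiso2}). The mesh-mismatch correction of the second step is a genuine, if minor, additional complication caused by the nested-mesh construction; the testing argument of the first step is standard but must be carried out with the correct weighted inner products and the injection $\Ic$.
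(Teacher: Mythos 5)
Your proposal is correct, but it takes a genuinely different route from the paper's. The paper argues on the potential side: it shows that the sampled potential $\overline{\Pi}_{\Tc}\phi$ satisfies the discrete elliptic equation up to a residual $\bs{r}^k-\mathrm{div}_{\Tc}\bs{e}^k$ of size $O(h+\tau)$, deduces via lemma \ref{lem:poincare} the error bound $\|\nabla_\Sigma(\bphi^k-(\overline{\Pi}_\Tc\phi)^k)\|_{\frac{\brho^k+\brho^{k-1}}{2}}\le C(h+\tau)$, then uses the variational characterization of $\bphi^k$ to bound its energy by that of $\overline{\Pi}_\Tc\phi$, which is finally estimated through Jensen's inequality and point (3) of lemma \ref{lem:properties}, where the isotropy condition \eqref{eq:asyiso} enters. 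You instead argue on the dual/flux side: the weighted Dirichlet energy of $\bphi^k$ is bounded by the kinetic energy $\sum_\sigma (F^k_\sigma)^2 (m^k_\sigma)^{-1} m_\sigma d_\sigma$ of \emph{any} discrete flux with the prescribed divergence, and you build the competitor by face-averaging $\rho\nabla\phi$, repairing the fine/coarse divergence mismatch with an auxiliary discrete Poisson solve (again controlled by lemma \ref{lem:poincare}), and checking the consistency of the flux energy directly via Lipschitz bounds, Jensen and \eqref{eq:asyiso}. The ingredients coincide, but the decomposition differs: the fine/coarse mismatch sits in the residual $\bs{r}^k$ in the paper versus your correction flux $\delta\bF^k$; your argument never needs the intermediate $H^1$-type error estimate on $\bphi-\overline{\Pi}_\Tc\phi$, which the paper's route produces as a by-product consonant with theorem \ref{th:convpot}, while your flux competitor is closer in spirit to the construction of \cite{gladbach2018scaling} and to the primal Benamou--Brenier structure, and only requires an upper bound, which is all the statement asks for. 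Note finally that your bound $\|\bs{g}^k-\tilde{\bs{g}}^k\|_\Tc\le Ch$ implicitly uses that the coarse cells have diameter $O(h)$; the paper's bound on $\bs{r}^k$ (and its final Jensen step) relies on the same fact, which holds for the nested meshes of section \ref{ssec:mesh}, so this is not an additional restriction of your argument.
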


\begin{proof}
First, we integrate equation \eqref{eq:laplacian} over the time-space cell $[t^{k-1},t^k]\times K$ and divide it by $\tau m_K$. This yields 
\begin{equation}\label{eq:laplaciand}
 - \mathrm{div}_{K} \bs{u}^k  =  \frac{1}{m_K \tau}  \int_K \int_{t^{k-1}}^{t^k} \partial_t \rho\,  \ed t\ed x \,.
\end{equation}
where $\bs{u} \in [\mathbb{F}_\Tc]^{N+1}$ is defined by
\[
u_{K,\sigma}^k \coloneqq \frac{1}{\tau m_\sigma} \int_\sigma \int_{t^{k-1}}^{t^k} (\rho \nabla \phi)  \cdot \bs{n}_{K,\sigma}\,\ed t \,\ed s \, .
\]
We define $\bs{e} \in [\mathbb{F}_\Tc]^{N+1}$ and $\bs{r} \in [\mathbb{P}_\Tc]^{N+1}$ by
\[
e_{K,\sigma}^k = u^k_{K,\sigma} - ((\Lc_\Sigma \circ \Ic)\, \frac{\brho^k+\brho^{k-1}}{2})_\sigma \nabla_{\sigma} (\overline{\Pi}_\Tc \phi)^k\,, \]
and denoting by $\coarse{K}$ the cell in $\coarse{\Tc}$ such that $K\subset \coarse{K}$, 
\[
r_{K}^k \coloneqq  \frac{1}{m_K \tau}  \int_K \int_{t^{k-1}}^{t^k} \partial_t \rho  \, \ed t\ed x -  \frac{1}{m_{\coarse{K}} \tau}  \int_{\coarse{K}} \int_{t^{k-1}}^{t^k} \partial_t \rho \, \ed t\ed x\,.
\]
Then
\[
- \mathrm{div}_{\Tc} ((\Lc_{\Sigma}\circ \Ic)(\frac{\brho^k+\brho^{k-1}}{2}) \odot   \nabla_{\Sigma} (\bphi^k - (\overline{\Pi}_\Tc \phi)^k ) = \bs{r}^k - \mathrm{div}_{\Tc} \bs{e}^k \,.
\]
Multiplying both sides by $(\bphi^k - (\overline{\Pi}_\Tc \phi)^k )$ we obtain
\[
\| \nabla_{\Sigma} (\bphi^k - (\overline{\Pi}_\Tc \phi)^k ) \|^2_{\frac{\brho^k+\brho^{k-1}}{2}} = \langle  \bs{r}^k - \mathrm{div}_{\Tc} \bs{e}^k,(\bphi^k - (\overline{\Pi}_\Tc \phi)^k )  \rangle_{\Tc} \,.
\]
Using the discrete Poincar\'e inequality of lemma \ref{lem:poincare} and the lower bound on $\rho$, this implies
\[
\| \nabla_{\Sigma} (\bphi^k - (\overline{\Pi}_\Tc \phi)^k ) \|_{\frac{\brho^k+\brho^{k-1}}{2}} \leq C ( \|  \bs{r}^k\|_\Tc +\| \bs{e}^k\|_{\mathbb{F}_\Tc} )\,,
\]
where $C>0$ is a constant only depending on the lower bound $\varepsilon$ and the domain. By the regularity of $\phi$ and $\rho$, and the estimate \eqref{eq:mesh_reg}, we then obtain
\begin{equation}\label{eq:boundphih}
\| \nabla_{\Sigma} (\bphi^k - (\overline{\Pi}_\Tc \phi)^k ) \|_{\frac{\brho^k+\brho^{k-1}}{2}} \leq C ( h + \tau)\,,
\end{equation}
where now $C$ depends also on $\rho$ and $\phi$.

In order to get an estimate on the energy, we observe that $\bs{\phi}^k$ minimizes the functional
\[
\bs{\psi} \in [\mathbb{P}_{\Tc}]^{N+1} \; \longmapsto \; \| \nabla_{\Sigma} \bs{\psi} \|^2_{\frac{\brho^k+\brho^{k-1}}{2}} - \langle \Ic (\frac{\brho^k-\brho^{k-1}}{\tau}) , \bs{\psi} \rangle_\Tc \,,
\]
which implies the inequality
\[
\| \nabla_{\Sigma} \bphi^k  \|^2_{\frac{\brho^k+\brho^{k-1}}{2}} \leq \| \nabla_{\Sigma}  (\overline{\Pi}_\Tc \phi)^k ) \|^2_{\frac{\brho^k+\brho^{k-1}}{2}} + \langle \Ic (\frac{\brho^k-\brho^{k-1}}{\tau}) , (\bphi^k - (\overline{\Pi}_\Tc \phi)^k ) \rangle_{\Tc} \,.
\]
Using again the discrete Poincar\'e inequality of lemma \ref{lem:poincare} and the lower bound on $\rho$, as well as its regularity, we get
\[
\| \nabla_{\Sigma} \bphi^k  \|^2_{\frac{\brho^k+\brho^{k-1}}{2}} \leq \| \nabla_{\Sigma}  (\overline{\Pi}_\Tc \phi)^k ) \|^2_{\frac{\brho^k+\brho^{k-1}}{2}} + C \| \nabla_{\Sigma} (\bphi^k - (\overline{\Pi}_\Tc \phi)^k ) \|_{\frac{\brho^k+\brho^{k-1}}{2}} \,.
\]
Hence, using \eqref{eq:boundphih}, we obtain
\[
\| \nabla_{\Sigma} \bphi  \|^2_{\brho} \leq \| \nabla_{\Sigma} \overline{\Pi}_\Tc \phi ) \|^2_{\brho} + C (h+\tau)\,.
\]
Finally, using Jensen's inequality and then lemma  \ref{lem:properties}, we find
\[
\begin{aligned}
 \| \nabla_{\Sigma} \overline{\Pi}_\Tc \phi \|^2_{\brho} & \leq \sum_{k=1}^{N+1} \int_{t^{k-1}}^{t^k}  \| \nabla_{\Sigma} {\Pi}_\Tc \phi(t,\cdot) ) \|^2_{\frac{\brho^{k}+\brho^{k-1}}{2}}  \ed t \\
& \leq \sum_{k=1}^{N+1} \int_{t^{k-1}}^{t^k}  \langle \Ic \frac{\brho^{k}+\brho^{k-1}}{2}, \Pi_{\Tc} |\nabla \phi(t,\cdot)|^2 \rangle_\Tc \ed t + C(h + \eta_h)\\
& = \sum_{k=1}^{N+1} \sum_{K\in \Tc} \int_{t^{k-1}}^{t^k} \int_K \frac{\rho(t^{k},\cdot)+\rho(t^{k-1},\cdot)}{2} |\nabla \phi(t,\x_K)|^2\, \ed x  \ed t+ C(h + \eta_h)\\
& \leq  \int_{0}^{1} \int_\Omega  {\rho} |\nabla \phi|^2\, \ed x  \ed t + C(h + \tau + \eta_h)\, ,
\end{aligned}
\]
which concludes the proof.
\end{proof}

We are now ready to state the two main convergence results of this section, which provide quantitative estimates for the convergence rates of the discrete action and the discrete potential.

\begin{theorem}[Convergence of the action]\label{th:action}
Suppose that $\phi: [0,1]\times \Omega\rightarrow \mathbb{R}$ is an optimal potential for the dual Wasserstein problem from $\rho^{in}$ to $\rho^f$ and that $\rho:[0,1]\times \Omega \rightarrow [0,+\infty)$ is the associated interpolation. Then, denoting $\brho^{in} \coloneqq\Pi_{\coarse{\mc{T}}} \rho^{in}$ and $\brho^f\coloneqq \Pi_{\coarse{\mc{T}}} \rho^f$, and taking $\eta_h$ as in \eqref{eq:asyiso}, the following holds:
\begin{enumerate}
\item if $\phi \in C^{1,1}([0,1]\times \Omega)$, there exists a constant $C>0$ only dependent on $\phi$ and $\zeta$ such that 
\[
W_{N,\Tc}^2(\brho^{in}, \brho^f) \geq W_2^2(\rho^{in},\rho^f)   - C (h +\tau+ \eta_h) \,;
\]  
\item if $\phi \in L^{\infty}([0,1],C^{1,1}(\Omega))$ and $\rho,\partial_t \rho\in L^{\infty}([0,1],C^{0,1}(\Omega))$, with $\rho\geq \varepsilon >0$, there exists a constant $C>0$ depending only on $\rho$, $\phi$, $\varepsilon$, $\zeta$ and $\Omega$ such that
\[
W_{N,\Tc}^2(\brho^{in}, \brho^f) \leq W_2^2(\rho^{in},\rho^f)   + C (h +\tau+ \eta_h) \,.
\]  
\end{enumerate}
\end{theorem}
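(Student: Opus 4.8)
The plan is to prove the two inequalities separately, each by exhibiting an admissible competitor for one of the two dual/primal problems — the standard strategy for comparing a discrete variational problem with its continuous limit.

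For the lower bound (1), I would build a competitor for the \emph{dual} discrete problem out of the continuous optimal potential $\phi$. Since $\phi\in C^{1,1}$, the sampling $\bphi \coloneqq \overline{\Pi}_{\mc{T}}\phi$ is well-defined, and I would show that (after possibly subtracting a suitable time-dependent constant so that the discrete Hamilton–Jacobi inequality in the definition of $\Kc_{N,\Tc}$ holds up to $O(h+\tau+\eta_h)$) it is an admissible, or nearly admissible, competitor. The key analytic input is Lemma \ref{lem:properties}(3): it controls $(\mathcal{L}_\Sigma^*|\nabla_\Sigma\Pi_\Tc\phi|^2)_K$ by $(\Pi_\Tc|\nabla\phi|^2)_K + C(h+\eta_h)$, which is exactly what is needed to bound the quadratic terms $\frac14\mathcal{R}_{\coarse{\Tc}}(\nabla_\Sigma\bphi)^2$ appearing both in the constraint and in the objective \eqref{eq:dualproblem}. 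Plugging $\bphi$ into \eqref{eq:dualproblem}, using that $\brho^{in},\brho^f$ are cell averages of $\rho^{in},\rho^f$, and using the continuous saturated Hamilton–Jacobi equation $\partial_t\phi+\tfrac12|\nabla\phi|^2=0$ together with Taylor expansion in time (cost $O(\tau)$) and in space (cost $O(h)$), the discrete dual value is bounded below by $\int_\Omega\phi(1,\cdot)\rho^f-\int_\Omega\phi(0,\cdot)\rho^{in} - C(h+\tau+\eta_h) = \tfrac12 W_2^2(\rho^{in},\rho^f) - C(h+\tau+\eta_h)$. By weak duality for the discrete problem, $\tfrac12 W_{N,\Tc}^2 \ge$ discrete dual value, giving (1).

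For the upper bound (2), I would build a competitor for the \emph{primal} discrete problem \eqref{eq:geodth} out of the continuous interpolation. Here the natural choice is $\brho \coloneqq \overline{\Pi}_{\coarse{\mc{T}}}\rho$, which is strictly positive since $\rho\ge\varepsilon$, and for the momentum one solves the discrete elliptic problem of Proposition \ref{prop:elliptic} to get a discrete potential $\bphi$ with $\Ic(\tfrac{\brho^k-\brho^{k-1}}{\tau}) + \mathrm{div}_\Tc((\Lc_\Sigma\circ\Ic)(\tfrac{\brho^k+\brho^{k-1}}{2})\odot\nabla_\Sigma\bphi^k)=0$, then sets $\bF^k \coloneqq (\Lc_\Sigma\circ\Ic)(\tfrac{\brho^k+\brho^{k-1}}{2})\odot\nabla_\Sigma\bphi^k$ as in \eqref{eq:fluxes}. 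This pair lies in $\Cc_{N,\Tc}$ by construction (the continuity equation \eqref{eq:continuityth} holds, and the endpoint conditions are imposed). Its action equals $\tfrac{\tau}{2}\sum_k\langle(\Lc_\Sigma\circ\Ic)(\tfrac{\brho^k+\brho^{k-1}}{2}),(\nabla_\Sigma\bphi^k)^2\rangle_\Sigma = \tfrac12\|\nabla_\Sigma\bphi\|_{\brho}^2$, and Proposition \ref{prop:elliptic} bounds this by $\tfrac12\int_0^1\int_\Omega\rho|\nabla\phi|^2 + C(h+\tau+\eta_h) = \tfrac12 W_2^2(\rho^{in},\rho^f) + C(h+\tau+\eta_h)$. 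Since $\tfrac12 W_{N,\Tc}^2$ is the infimum of the action over $\Cc_{N,\Tc}$, it is bounded above by the action of this competitor, giving (2). Note that here I am using that $\nabla\phi$ is the continuous optimal velocity and $\rho$ the associated interpolation, so that the continuous action equals $\tfrac12 W_2^2$.

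The main obstacle I anticipate is in part (1): verifying that the sampled potential $\overline{\Pi}_{\mc{T}}\phi$ can be corrected into an \emph{admissible} competitor for $\Kc_{N,\Tc}$. The discrete Hamilton–Jacobi constraint involves $\Ic^*(\tfrac{\bphi^{k+1}-\bphi^k}{\tau})$ plus the averaged quadratic terms, and the time-averaging in the definition of $\overline{\Pi}_\Tc\phi$ together with the $\Ic^*$ weighting must be reconciled with the pointwise continuous identity $\partial_t\phi = -\tfrac12|\nabla\phi|^2$; one likely needs to subtract a term like $C(h+\tau+\eta_h)t^k$ from $\bphi^k$ to absorb the consistency error and restore the inequality, and then track how this shift affects the objective \eqref{eq:dualproblem} (it contributes only an $O(h+\tau+\eta_h)$ change since the total masses of $\brho^{in}$ and $\brho^f$ agree). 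Handling the discrepancy between evaluating $\rho$ at the circumcenter versus averaging over the cell, and between $C^{1,1}$ Taylor remainders in space and time, is routine but must be done carefully to confirm all error terms are genuinely $O(h+\tau+\eta_h)$ with constants depending only on the stated quantities.
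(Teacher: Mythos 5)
Your proposal is correct and follows essentially the same route as the paper: for the lower bound, the sampled potential $\overline{\Pi}_{\mc{T}}\phi$ is corrected by subtracting a linear-in-time shift of size $O(h+\tau+\eta_h)$ (exactly the correction you anticipated) to make it admissible for the dual problem \eqref{eq:dualproblem}, with Lemma \ref{lem:properties}(3) supplying the consistency of the quadratic term; for the upper bound, the paper uses precisely your primal competitor $(\overline{\Pi}_{\coarse{\mc{T}}}\rho,\bs{F})$ built from the discrete elliptic problem of Proposition \ref{prop:elliptic} and concludes via \eqref{eq:ellipticest}.
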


\begin{proof}
For the first point, we first observe that by lemma \ref{lem:properties} and the regularity of $\phi$, $\overline{\Pi}_\Tc \phi$ verifies
	\[
	\displaystyle \Ic^* (\frac{(\overline{\Pi}_\Tc \phi)^{k+1}-(\overline{\Pi}_\Tc \phi)^{k}}{\tau}) + \frac{\tau}{4} \Rc_{\coarse{\Tc}}((\nabla_\Sigma(\overline{\Pi}_\Tc \phi)^k)^2 +(\nabla_\Sigma(\overline{\Pi}_\Tc \phi)^{k+1})^2 ) \leq C (h + \tau + \eta_h)\,.
	\]
Then, define $\bphi$ by $\bphi^{k} \coloneqq (\overline{\Pi}_\Tc   \phi)^k - C (t^k+ t^{k-1}) (h + \tau + \eta_h)/2$, for $k\in\{1,\ldots,N+1\}$. Then $\bphi$ is admissible for the dual problem \eqref{eq:dualproblem}, hence
\[
	\frac{W_{N,\Tc}^2(\brho^{in}, \brho^f)}{2} \geq  \langle \displaystyle \Ic^* \bphi^{N+1} - \frac{\tau}{4} \mathcal{R}_{\coarse{\Tc}} (\nabla_\Sigma \bphi^{N+1})^2, \brho^f \rangle_{\coarse{\Tc}} - \langle \displaystyle \Ic^* \bphi^{1} + \frac{\tau}{4} \mathcal{R}_{\coarse{\Tc}} (\nabla_\Sigma \bphi^{1})^2, \brho^{in} \rangle_{\coarse{\Tc}} \, .
\]
Replacing back the definition of $\bphi$ and using the fact that $|\nabla_\sigma \bphi^1|$ and  $|\nabla_\sigma \bphi^{N+1}|$ are uniformly bounded by a constant  depending only on $\phi$, we get
\[
\frac{W_{N,\Tc}^2(\brho^{in}, \brho^f)}{2} \geq \int_\Omega \phi(1,\cdot) \rho^f - \int_\Omega \phi(0,\cdot) \rho^{in}   - C (h +\tau+ \eta_h) \, .
\]  
	
For the second point it suffices to observe that the couple $(\rho, \phi)$ satisfies \eqref{eq:laplacian}. Then, defining $\bs{\rho}$ and $\bs{\phi}$ as in the statement of proposition \ref{prop:elliptic}, we can construct an admissible competitor $(\bs{\rho},\bs{F})$ for the discrete optimal transport problem by defining the momentum $\bs{F}\in[\mathbb{F}_\Tc]^{N+1}$ as in equation \eqref{eq:fluxes}. Since, by definition,
\[	W_{N,\Tc}^2(\brho^{in}, \brho^f) \leq 2\,\Bc_{N,\Tc} (\bs{\rho},\bs{F}) = \| \nabla_\Sigma \bs{\phi}\|^2_{\brho}\,, \]
we obtain the desired estimate using \eqref{eq:ellipticest}.
\end{proof}

The issue of convergence of the discrete solution $(\bs{\rho},\bs{F})$ towards its continuous counterpart has been treated in detail in \cite{lavenant2019unconditional} for a general class of discretizations. These include the finite volume schemes considered here, in the case where the two domain decompositions coincide so that $\mathcal{I}$ is the identity operator (see remark \ref{rmk:Iidentity}). For this case, one has that the discrete denstiy $\bs{\rho}$ can be lifted to a measure on $[0,1]\times \Omega$ converging weakly to the exact optimal transport interpolation with mesh refinement. 

It is not difficult to show that the second point of theorem \ref{th:action} implies a similar convergence result, for smooth positive solutions, also when the two discretizations of the domain do not coincide (e.g., this is a direct consequence of theorem 2.18 in \cite{lavenant2019unconditional}). Besides this weak convergence result, theorem \ref{th:action} also implies the following quantitative estimate for the convergence of the potential, although in a norm dependent on the discrete solution itself.

\begin{theorem}[Convergence of the potential]\label{th:convpot}
Suppose that $\phi: [0,1]\times \Omega\rightarrow \mathbb{R}$ is an optimal potential for the dual Wasserstein problem from $\rho^{in}$ to $\rho^f$ and that $\rho:[0,1]\times \Omega \rightarrow [0,+\infty)$ is the associated interpolation. 
Let $(\tilde{\bs{\rho}},\tilde{\bs{\phi}})$ be the discrete solution associated with the boundary conditions $\brho^{in} \coloneqq\Pi_{\coarse{\mc{T}}} \rho^{in}$ and $\brho^f\coloneqq \Pi_{\coarse{\mc{T}}} \rho^f$.
If $\phi \in C^{1,1}([0,1]\times \Omega)$ and $\rho,\partial_t \rho\in L^{\infty}([0,1],C^{0,1}(\Omega))$, with $\rho\geq \varepsilon >0$, there exists a constant $C>0$ depending only on $\rho$, $\phi$, $\varepsilon$, $\zeta$ and $\Omega$, such that
\[
 \| \nabla_\Sigma \tilde{\bs{\phi}} - \nabla_\Sigma \overline{\Pi}_{\Tc} \phi\|^2_{\tilde{\bs{\rho}}} \leq C(h+\tau +\eta_h) \,,
\]
for $\eta_h$ defined as in \eqref{eq:asyiso}.
\end{theorem}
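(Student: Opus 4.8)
The plan is to exploit the variational (minimization) characterization of $\tilde{\bs{\phi}}$ together with the two-sided estimates on the discrete action already established. Recall that, in the linear-reconstruction case, $\tilde{\bs{\phi}}$ is the maximizer of the dual problem \eqref{eq:dualproblem}, which by strong duality has value $\tfrac12 W_{N,\Tc}^2(\brho^{in},\brho^f)$; equivalently, after eliminating the momentum, the pair $(\tilde{\bs{\rho}},\tilde{\bs{\phi}})$ solves \eqref{eq:geodth_KKT}, and the action can be written as in \eqref{eq:discW2pot}, namely $\tfrac12 W_{N,\Tc}^2 = \tfrac12\|\nabla_\Sigma\tilde{\bs{\phi}}\|_{\tilde{\bs\rho}}^2$. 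The key observation is the Pythagorean-type identity coming from the fact that, for the density $\tilde{\bs\rho}$ fixed, the continuity equation \eqref{eq:continuityth} makes $\tilde{\bs\phi}$ the minimizer (over each time slice) of the quadratic functional $\bs\psi\mapsto \|\nabla_\Sigma\bs\psi\|_{\tilde{\bs\rho}^k}^2 - 2\langle\Ic(\tfrac{\brho^k-\brho^{k-1}}{\tau}),\bs\psi\rangle_\Tc$, exactly as used in the proof of Proposition \ref{prop:elliptic}. Hence for any competitor $\bs\psi$,
\[
\|\nabla_\Sigma\bs\psi\|_{\tilde{\bs\rho}}^2 - \|\nabla_\Sigma\tilde{\bs\phi}\|_{\tilde{\bs\rho}}^2 = \|\nabla_\Sigma(\bs\psi-\tilde{\bs\phi})\|_{\tilde{\bs\rho}}^2 + 2\big(\text{linear remainder}\big),
\]
and choosing $\bs\psi = \overline{\Pi}_\Tc\phi$ the linear remainder is controlled because $\overline{\Pi}_\Tc\phi$ is an $O(h+\tau)$-almost solution of the same discrete elliptic problem (inequality \eqref{eq:boundphih}).

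Concretely, the steps I would carry out are: first, write down the orthogonality identity for the discrete elliptic problem with weight $\tilde{\bs\rho}$ exactly as in Proposition \ref{prop:elliptic}, obtaining
\[
\|\nabla_\Sigma(\overline{\Pi}_\Tc\phi - \tilde{\bs\phi})\|_{\tilde{\bs\rho}}^2 = \|\nabla_\Sigma\overline{\Pi}_\Tc\phi\|_{\tilde{\bs\rho}}^2 - \|\nabla_\Sigma\tilde{\bs\phi}\|_{\tilde{\bs\rho}}^2 + \langle\,\text{error terms}\,,\,\overline{\Pi}_\Tc\phi-\tilde{\bs\phi}\,\rangle,
\]
where the error terms are the $\bs r^k$ and $\bs e^k$ of Proposition \ref{prop:elliptic}, each of size $O(h+\tau)$ in the appropriate norm, and the factor $\|\nabla_\Sigma(\overline{\Pi}_\Tc\phi-\tilde{\bs\phi})\|_{\tilde{\bs\rho}}$ can be absorbed by Young's inequality after using the lower bound $\rho\ge\varepsilon$ (so the $\tilde{\bs\rho}$-norm dominates $\varepsilon$ times the $\mathbb{F}_\Tc$-norm) together with the discrete Poincaré inequality of Lemma \ref{lem:poincare}. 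Second, bound the term $\|\nabla_\Sigma\overline{\Pi}_\Tc\phi\|_{\tilde{\bs\rho}}^2$ from above by $\int_0^1\int_\Omega\rho|\nabla\phi|^2 + C(h+\tau+\eta_h) = W_2^2(\rho^{in},\rho^f) + C(h+\tau+\eta_h)$, using Jensen and Lemma \ref{lem:properties}(3) exactly as at the end of the proof of Proposition \ref{prop:elliptic} — note here $\phi$ is the genuine optimal potential so $\int\rho|\nabla\phi|^2 = W_2^2$. Third, bound $\|\nabla_\Sigma\tilde{\bs\phi}\|_{\tilde{\bs\rho}}^2 = W_{N,\Tc}^2(\brho^{in},\brho^f)$ from below by $W_2^2(\rho^{in},\rho^f) - C(h+\tau+\eta_h)$ using point (1) of Theorem \ref{th:action}. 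Combining, the difference $\|\nabla_\Sigma\overline{\Pi}_\Tc\phi\|_{\tilde{\bs\rho}}^2 - \|\nabla_\Sigma\tilde{\bs\phi}\|_{\tilde{\bs\rho}}^2$ is $\le C(h+\tau+\eta_h)$, and the error/cross terms in the orthogonality identity are also $O(h+\tau)$, which yields the claimed bound on $\|\nabla_\Sigma\tilde{\bs\phi}-\nabla_\Sigma\overline{\Pi}_\Tc\phi\|_{\tilde{\bs\rho}}^2$.

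The one subtlety I would be careful about is that $\tilde{\bs\phi}$ solves \eqref{eq:geodth_KKT} with a possibly nonzero slack variable $\bs\lambda\le 0$ supported where $\tilde{\bs\rho}$ vanishes; however, since we only ever test the discrete Hamilton–Jacobi relation against weights of the form $\tilde{\bs\rho}$ (in the $\tilde{\bs\rho}$-weighted norms) and use the continuity equation — which holds as an equality — the complementarity $\rho^k_{\coarse K}\lambda^k_{\coarse K}=0$ means the slack never actually contributes, so the argument goes through verbatim with $\tilde{\bs\rho}$ in place of the $\overline\Pi_{\coarse{\mc T}}\rho$ of Proposition \ref{prop:elliptic}. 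The main obstacle, then, is not conceptual but bookkeeping: one must re-derive the elliptic orthogonality identity with the \emph{discrete} weight $\tilde{\bs\rho}$ (which is not smooth and only satisfies $\tilde{\bs\rho}\ge 0$, not $\ge\varepsilon$, a priori) — this is why the statement requires the continuous $\rho\ge\varepsilon$ and why one wants the comparison to be against $\overline\Pi_\Tc\phi$ with $\phi$ the true optimizer; the positivity and regularity of the continuous data is what feeds the needed bounds through the two invocations of Theorem \ref{th:action} and Proposition \ref{prop:elliptic}, rather than through any direct control of $\tilde{\bs\rho}$ itself. I expect the cross-term estimate (showing the linear remainder is $O(h+\tau)$ and can be absorbed) to be the part requiring the most care, since it needs the $O(h+\tau)$ control of $\|\nabla_\Sigma(\overline\Pi_\Tc\phi - \tilde{\bs\phi})\|$ — i.e. \eqref{eq:boundphih} adapted to weight $\tilde{\bs\rho}$ — which in turn needs $\tilde{\bs\rho}$ bounded below, forcing a short argument that along the relevant (smooth-solution) regime $\tilde{\bs\rho}$ inherits a uniform positive lower bound, or alternatively a careful handling of the degenerate cells.
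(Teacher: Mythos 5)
Your overall frame (expand the square, use the discrete continuity equation satisfied by $(\tilde{\bs{\rho}},\tilde{\bs{\phi}})$, and feed in Theorem \ref{th:action}) starts in the right place, but two of your three estimates do not go through, and the obstruction is not bookkeeping. First, your Step 2 claims $\|\nabla_\Sigma\overline{\Pi}_\Tc\phi\|^2_{\tilde{\bs{\rho}}}\leq \int_0^1\int_\Omega\rho|\nabla\phi|^2\,\ed x\,\ed t+C(h+\tau+\eta_h)$ ``exactly as at the end of the proof of Proposition \ref{prop:elliptic}''. There the weight is $\overline{\Pi}_{\coarse{\Tc}}\rho$; here it is the \emph{discrete optimal} density $\tilde{\bs{\rho}}$. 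Lemma \ref{lem:properties}(3) only reduces the left-hand side to $\tau\sum_k\langle \Ic\frac{\tilde{\bs{\rho}}^k+\tilde{\bs{\rho}}^{k-1}}{2},\Pi_\Tc|\nabla\phi|^2\rangle_\Tc+C(h+\eta_h)$, and replacing the weight $\tilde{\bs{\rho}}$ by $\rho$ would require quantitative convergence of the discrete density, which is exactly what is not available (and which the numerics show can fail for the non-enriched scheme). Second, your treatment of the cross/remainder term: the estimate \eqref{eq:boundphih}, the discrete Poincar\'e inequality of Lemma \ref{lem:poincare} and the ``absorb by Young'' step all hinge on the weight being bounded below by $\varepsilon$; the hypothesis $\rho\geq\varepsilon$ gives no lower bound on $\tilde{\bs{\rho}}$, and there is no ``short argument'' producing one (no discrete maximum principle is at hand, and the paper proves no such control). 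Note also that the cross term $\tau\sum_k\langle\Ic\frac{\tilde{\bs{\rho}}^k-\tilde{\bs{\rho}}^{k-1}}{\tau},(\overline{\Pi}_\Tc\phi)^k-\tilde{\bs{\phi}}^k\rangle_\Tc$ is not small by itself: by the discrete continuity equation, its $\tilde{\bs{\phi}}$-part equals $\|\nabla_\Sigma\tilde{\bs{\phi}}\|^2_{\tilde{\bs{\rho}}}=W^2_{N,\Tc}$, an $O(1)$ quantity, so it cannot be disposed of as an $O(h+\tau)$ remainder; and bounding it via an almost-solution property of $\overline{\Pi}_\Tc\phi$ for the equation with weight $\tilde{\bs{\rho}}$ would be circular, since smallness of $\nabla_\Sigma(\overline{\Pi}_\Tc\phi-\tilde{\bs{\phi}})$ in the $\tilde{\bs{\rho}}$-norm is the very thing being proved.

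The ideas your plan is missing, and which the paper's proof uses, are: (i) the saturated Hamilton--Jacobi equation $\partial_t\phi+\frac12|\nabla\phi|^2=0$ satisfied by the optimal continuous potential, which converts $\frac12\|\nabla_\Sigma\overline{\Pi}_\Tc\phi\|^2_{\tilde{\bs{\rho}}}$ (after Lemma \ref{lem:properties}(3), giving $I_1$) into time-differences of $\Pi_\Tc\phi$ weighted by $\tilde{\bs{\rho}}$, so that no comparison of $\tilde{\bs{\rho}}$ with $\rho$ is ever needed; (ii) a summation by parts in time of the cross term, so that the interior contributions cancel exactly against those produced in (i), and only boundary terms at $k=0$ and $k=N+1$ survive --- where $\tilde{\bs{\rho}}^0=\Pi_{\coarse{\Tc}}\rho^{in}$ and $\tilde{\bs{\rho}}^{N+1}=\Pi_{\coarse{\Tc}}\rho^f$ are known data, so their consistency with $\int_\Omega\phi(1,\cdot)\rho^f-\int_\Omega\phi(0,\cdot)\rho^{in}$ is elementary. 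After these cancellations the quantity $\frac12\|\nabla_\Sigma\tilde{\bs{\phi}}-\nabla_\Sigma\overline{\Pi}_\Tc\phi\|^2_{\tilde{\bs{\rho}}}$ equals $\frac12\bigl(W^2_{N,\Tc}-W_2^2\bigr)$ plus consistency errors, and this difference is controlled by point (2) of Theorem \ref{th:action} (an upper bound on $W^2_{N,\Tc}$, not the lower bound of point (1)); that is the only place where $\rho\geq\varepsilon$ and the regularity of $\rho$ enter, through Proposition \ref{prop:elliptic} applied with the weight $\overline{\Pi}_{\coarse{\Tc}}\rho$, never with $\tilde{\bs{\rho}}$.
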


\begin{proof}
Consider the quantity 
\begin{equation}\label{eq:modenergy}
\mathcal{E}_{N,\Tc}(\tilde{\bs{\rho}},\tilde{\bs{\phi}}|\phi) \coloneqq \frac{1}{2} \| \nabla_\Sigma \tilde{\bs{\phi}} - \nabla_\Sigma \overline{\Pi}_{\Tc} \phi\|^2_{\tilde{\bs{\rho}}} \,.
\end{equation}
Expanding the square in \eqref{eq:modenergy} we obtain
\begin{equation}\label{eq:energydec}
\mathcal{E}_{N,\Tc}(\tilde{\bs{\rho}},\tilde{\bs{\phi}}|\phi) =  \mathcal{B}_{N,\mc{T}} (\tilde{\bs{\rho}},\tilde{\bs{F}}) + \frac{1}{2} \| \nabla_\Sigma  \overline{\Pi}_{\mc{T}} \phi\|^2_{\tilde{\bs{\rho}}} -  \langle   \nabla_\Sigma \tilde{\bs{\phi}},  \nabla_\Sigma  \overline{\Pi}_{\mc{T}} \phi\rangle_{\tilde{\bs{\rho}}}\,,
\end{equation}
where $\tilde{\bs{F}}$ is given by equation \eqref{eq:fluxes}.
The second term in \eqref{eq:energydec} can be written as follows
\begin{equation}
\begin{aligned}\label{eq:firstterm}
\frac{1}{2} \| \nabla_\Sigma  \overline{\Pi}_{\mc{T}} \phi\|^2_{\tilde{\bs{\rho}}}& = \frac{1}{2} \sum_{k=1}^{N+1} \int_{t^{k-1}}^{t^k}  \langle  \mathcal{L}_\Sigma^* |\nabla_\Sigma  {\Pi}_{\mc{T}} \phi(s,\cdot)|^2 - \Pi_\mc{T} |\nabla \phi (s,\cdot)|^2 , \Ic( \frac{\tilde{\bs{\rho}}^k + \tilde{\bs{\rho}}^{k-1}}{2} )\rangle_{\Tc} \, \ed s \\
& \quad -  \sum_{k=1}^{N+1} \int_{t^{k-1}}^{t^k}  \langle \Pi_\mc{T} \partial_t \phi (s,\cdot) , \Ic( \frac{\tilde{\bs{\rho}}^k + \tilde{\bs{\rho}}^{k-1}}{2} )\rangle_{\Tc} \,\ed s \\
& = I_1  -  \sum_{k=1}^{N+1} \langle \Pi_\mc{T}  {\phi} ({t}^{k},\cdot) - \Pi_\mc{T}{\phi} ({t}^{k-1},\cdot)  , \Ic(\frac{\tilde{\bs{\rho}}^k + \tilde{\bs{\rho}}^{k-1}}{2}) \rangle_{\Tc} \,.
\end{aligned}
\end{equation}
The third term in \eqref{eq:energydec} can be written as follows
\begin{equation}\label{eq:secondterm}
\begin{aligned}
- \langle   \nabla_\Sigma \tilde{\bs{\phi}},  \nabla_\Sigma  \overline{\Pi}_{\mc{T}} \phi\rangle_{\tilde{\bs{\rho}}} & = \sum_{k=1}^{N+1} \int_{t^{k-1}}^{t^k} \langle \mathrm{div}_{\Tc} ((\Lc_{\Sigma}\circ \Ic)(\frac{\tilde{\brho}^k+\tilde{\brho}^{k-1}}{2}) \odot   \nabla_{\Sigma} \bphi^k), \Pi_{\mc{T}} \phi(s,\cdot) \rangle_{\Tc} \ed s \\
& = -\sum_{k=1}^{N+1} \int_{t^{k-1}}^{t^k}  \langle \Ic( \frac{\tilde{\bs{\rho}}^{k} - \tilde{\bs{\rho}}^{k-1}}{\tau}) ,  \Pi_{\mc{T}} \phi(s,\cdot) \rangle_{\Tc} \,\ed s\\
& = I_2 - \langle \Ic \tilde{\bs{\rho}}^{N+1}, \Pi_{\mc{T}} \phi(1,\cdot) \rangle_{\Tc} + \langle \Ic \tilde{\bs{\rho}}^{0}, \Pi_{\mc{T}} \phi(0,\cdot) \rangle_{\Tc} \\ & \quad + \sum_{k=1}^N \langle \Pi_\mc{T}  {\phi} ({t}^{k},\cdot) - \Pi_\mc{T}{\phi} ({t}^{k-1},\cdot)  , \Ic( \frac{\tilde{\bs{\rho}}^k + \tilde{\bs{\rho}}^{k-1}}{2}) \rangle_{\Tc} \,,
\end{aligned}
\end{equation}
where
\[
I_2 \coloneqq  \sum_{k=1}^N \int_{t^{k-1}}^{t^{k}} \langle \Pi_\mc{T}\partial_t \phi(s,\cdot) -  \Pi_\mc{T} \frac{  {\phi} ({t}^{k+1},\cdot) - {\phi} ({t}^{k},\cdot)}{\tau}  ,\Ic \tilde{\bs{\rho}}^{k-1,k}(s)  \rangle_{\Tc} \, \ed s 
\]
and $\tilde{\bs{\rho}}^{k-1,k}(s) $ is the linear interpolation between $\tilde{\bs{\rho}}^{k-1}$ and $\tilde{\bs{\rho}}^{k}$, i.e.\
$\tilde{\bs{\rho}}^{k-1,k}(s) \coloneqq \tilde{\bs{\rho}}^{k-1}(t^{k}-s)/\tau  + \tilde{\bs{\rho}}^{k}(s - t_{k-1})/\tau$.

Adding and subtracting $W^2_2(\rho^{in},\rho^f)/2 = \int_\Omega \phi(1,\cdot) \rho^f - \int_\Omega \phi(0,\cdot) \rho^{in} $
from the right-hand side of \eqref{eq:energydec}, substituting \eqref{eq:firstterm} and \eqref{eq:secondterm}, and rearranging terms we obtain 
\begin{equation}\label{eq:energydec1}
\mathcal{E}_{N,\Tc}(\tilde{\bs{\rho}},\tilde{\bs{\phi}}|\phi) =   \frac{W^2_{N,\mc{T}} ({\bs{\rho}}^{in}, \bs{\rho}^f)}{2} - \frac{W^2_2(\rho^{in},\rho^f)}{2}   +I_1 +I_2 + I_3 \,,
\end{equation}
where
\[
\begin{aligned}
I_3\coloneqq &\int_\Omega \phi(1,\cdot) \rho^f - \int_\Omega \phi(0,\cdot) \rho^{in} - \langle\Ic^* \Pi_\mc{T} \phi(1,\cdot), \brho^{f} \rangle + \langle \Ic^* \Pi_\mc{T} \phi(0,\cdot), \brho^{in} \rangle \,,
\end{aligned}
\]
since $\brho^0 = \Pi_{\coarse{\Tc}} \rho^{in}$ and $\brho^{N+1} = \Pi_{\coarse{\Tc}} \rho^{f}$. Finally, we estimate $I_1$ and $I_3$ using lemma \ref{lem:properties}, $I_2$ using the regularity of $\phi$, and the remaining term using the second point in theorem \ref{th:action}.
\end{proof}

\begin{remark}
It is easy to construct solutions to the optimality conditions \eqref{eq:geodesic}, and therefore to problem \eqref{eq:geod}, satisfying the assumptions of theorem \ref{th:action} or \ref{th:convpot}.
In fact, given any smooth compactly-supported initial potential $\phi_0:\Omega \rightarrow \mathbb{R}$, there exists $\delta>0$ such that the map $x\mapsto T_t(x) \coloneqq x+ t \nabla \phi_0(x)$ is a diffemorphism for $t\in [0,\delta]$, and $\phi(t,\cdot) = \phi_0\circ T_t^{-1}$ is a smooth solution to the Hamilton-Jacobi equation. Moreover, given a strictly postive and smooth initial density $\rho_0$, the density $\rho(t,\cdot) = (\rho_0/ \mathrm{det}(\nabla T_t) )\circ T_t^{-1}$  solves the continuity equation with velocity $\nabla \phi(t,\cdot)$, and it is also smooth and strictly positive for $t \in [0,\delta]$. Then, the curve $t \mapsto (\rho(t\delta,\cdot), \delta \phi(t\delta,\cdot))$ solves the optimality conditions \eqref{eq:geodesic} on the time interval $[0,1]$.
On the other hand, even in the case where $\rho_0$ and $\rho_1$ are smooth and strictly positive the interpolation may not even be stricly positive as shown in \cite{santambrogio2016convexity}. 
\end{remark}

\begin{remark} The quantity $\mathcal{E}_{N,\Tc}(\tilde{\bs{\rho}},\tilde{\bs{\phi}}|\phi)$ defined in equation \eqref{eq:modenergy} is the discrete $H^1$ semi-norm of the error weighted by the discrete solution $\tilde{\brho}$. Note that this can also be seen as a discretization of the modulated energy (or relative entropy) of the kinetic energy, interpreted as a convex function of $(\rho,F)$. In section \ref{sec:numerics} we will use a similar quantity in order to evaluate numerically the convergence rate of the scheme.
\end{remark} 


	\section{Primal-dual barrier method}\label{sec:barrier}
	
	We introduce now the primal-dual barrier method, the discrete optimization technique we use to deal with the uniqueness, smoothness and positivity issues and effectively solve problem (\ref{eq:geodth}). The method consists in perturbing the discrete problem with a barrier function which forces the density to be positive. Here we show that the solutions of such perturbed problem converge to the ones of the original problem, when the perturbation vanishes, therefore justifying the use of a continuation method. Finally, we will detail the implementation of the algorithm commenting on the choice of the parameters involved.
	
	The most classical barrier function used when dealing with positivity constraints is the logarithmic barrier, $-\log{\rho}$.	 
	In order to write the perturbed problem, we first define precisely the barrier,
	 \[
	 J(x) = 
	 \begin{cases}
	 -\log(x)  &\text{if } x>0, \\
	 +\infty &\text{if } x\le 0,
	 \end{cases}
	 \]
	 so that it is convex and lower semi-continuous. 
	 We define the barrier function as ~$\Jc_{N,\Tc}(\brho)= \sum_{k=1}^{N} \tau \sum_{K\in\coarse{\Tc}} J(\rho_{\coarse{K}}^k) m_{\coarse{K}} $ and the perturbed version of problem (\ref{eq:geodth}) is therefore:
	\begin{equation}\label{eq:geodthmu}
	\inf_{(\brho,\bF)\in\Cc_{N,\Tc}} \Bc_{N,\Tc} (\brho,\bF) +\mu \Jc_{N,\Tc}(\brho) \,.
	\end{equation}
	Thanks to the strict convexity of the function $\Jc_{N,\Tc}$ on $[\mb{P}_{\coarse{\Tc}}^+\setminus \{0\}]^N$, the solution $(\brho^{\mu},\bF^{\mu})$ is now unique. Proceding as in section \ref{ssec:scheme}, $\brho^{\mu}$ can be characterized as solution to the system of optimality conditions
	\begin{equation}\label{eq:geodthmu_KKT}
	\left\{
	\begin{aligned}
	&\displaystyle \Ic (\frac{\brho^k-\brho^{k-1}}{\tau}) + \mathrm{div}_{\Tc} ((\Rc_{\Sigma}\circ \Ic)(\frac{\brho^k+\brho^{k-1}}{2}) \odot   \nabla_{\Sigma} \bphi^k) =0 , \\
	&\displaystyle \Ic^* (\frac{\bphi^{k+1}-\bphi^{k}}{\tau})+ \frac{1}{4} \Rc_{\coarse{\Tc}}[\frac{\brho^k+\brho^{k-1}}{2}] (\nabla_\Sigma\bphi^k)^2 + \Rc_{\coarse{\Tc}}[\frac{\brho^{k+1}+\brho^{k}}{2}] (\nabla_\Sigma\bphi^{k+1})^2  = -\bs{s}^k, \\
	&\brho^k \odot \bs{s}^k = \bs{\mu},
	\end{aligned}
	\right.
	\end{equation}
	where $k\in\{1,..,N+1\}$ for the continuity equation and $k\in\{1,..,N\}$ for the other conditions, and where $(\bs{\mu})_{\coarse{K}}=\mu$. The variable $\bs{s}\in[\mathbb{P}_{\coarse{\Tc}}]^N,(s^k)_{\coarse{K}} = \frac{\mu}{\rho_{\coarse{K}}^k}$, has been introduced in order to decouple the optimization in $\brho$ and $\bs{s}$,  and it highlights the connection with system (\ref{eq:geodth_KKT}).
	In particular, system \eqref{eq:geodthmu_KKT} can be seen as a perturbation of \eqref{eq:geodth_KKT}, where $\rho^k_{\coarse{K}}$ and $s^k_{\coarse{K}}=-\lambda^k_{\coarse{K}}$ are automatically forced to be positive and the orthogonality is relaxed. In this way, the solution $(\bphi^{\mu},\brho^{\mu},\bs{s}^{\mu})$ is now unique, up to an additive constant for the potential, and the problem is smooth.
	
	As it is classical in interior point methods (see, e.g., \cite{ConvOptBoyd}), if we regard $(\brho^{\mu},\bF^{\mu})$ as an approximate solution to problem \eqref{eq:geodth}, we can derive an explicit estimate on how far it is from optimality. Given a solution $(\brho,\bF)$ of the original problem, and defining $\tilde{\blambda}\in[\mathbb{P}_{\coarse{\Tc}}]^N$ by $(\tilde{\lambda}^k)_{\coarse{K}} = -\frac{\mu}{\rho_{\coarse{K}}^k}$, we have
	\begin{equation}
	\begin{aligned}
	\Bc_{N,\Tc} (\brho,\bF) &= \sup_{\bphi} \inf_{\brho\geq 0,\bF} \Lc_{N,\Tc}(\bphi,\brho,\bF)\\
	&\ge\inf_{\brho \geq 0,\bF} \Lc_{N,\Tc}({\bphi}^\mu,\brho,\bF) + \sum_{k=1}^{N} \tau \langle \tilde{\blambda}^k , \brho^k \rangle_{\coarse{\Tc}}  \\
	&= \Lc_{N,\Tc}(\bphi^\mu,\brho^{\mu},\bF^{\mu}) + \sum_{k=1}^{N} \tau \langle \tilde{\blambda}^k , (\brho^{\mu})^k \rangle_{\coarse{\Tc}}  = \Bc_{N,\Tc} (\brho^{\mu},\bF^{\mu}) - \mu \frac{N}{N+1} |\Omega| \,,
	\end{aligned}
	\end{equation}
	where we used the fact that $(\brho^{\mu},\bF^{\mu})$ is optimal for $ \Lc_{N,\Tc}({\bphi^\mu},\brho,\bF) + \sum_{k=1}^{N} \tau \langle \tilde{\blambda}^k , \brho^k \rangle_{\coarse{\Tc}}$, which can be easily verified by comparing the associated optimality conditions with \eqref{eq:geodthmu_KKT}. We have therefore
	\begin{equation}\label{eq:opt_bound}
	0\le  \Bc_{N,\Tc} (\brho^{\mu},\bF^{\mu}) - \Bc_{N,\Tc} (\brho,\bF) \le \mu \frac{N}{N+1} |\Omega| \,.
	\end{equation}
	As a consequence of (\ref{eq:opt_bound}), the smaller the parameter $\mu$, the closer the perturbed solution is to the original one.
	
	\begin{theorem}\label{thm:muconvergence}
	The solution $(\brho^{\mu},\bF^{\mu})$ of problem (\ref{eq:geodthmu}) converges up to extraction of a subsequence to $(\brho,\bF)$ solution of (\ref{eq:geodth}) for $\mu\rightarrow 0$.
	\end{theorem}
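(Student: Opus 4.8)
The plan is to argue by a standard compactness-plus-lower-semicontinuity argument, exploiting the uniform bound \eqref{eq:opt_bound}. First I would extract from the family $\{(\brho^\mu,\bF^\mu)\}_{\mu>0}$ a convergent subsequence. The densities $\brho^\mu$ live in a compact set: indeed each $\brho^k$ is nonnegative and, by the discrete continuity equation \eqref{eq:continuityth} together with the fixed boundary data \eqref{eq:timebc}, the total mass $\sum_{\coarse K}\rho^k_{\coarse K} m_{\coarse K}$ is conserved and equal to that of $\brho^{in}$, so each $\brho^k$ ranges in a compact simplex in $\mb P_{\coarse\Tc}$. For the momenta, inequality \eqref{eq:opt_bound} gives $\Bc_{N,\Tc}(\brho^\mu,\bF^\mu)\le \Bc_{N,\Tc}(\brho,\bF)+\mu\,\tfrac{N}{N+1}|\Omega|$, which is bounded uniformly in $\mu\le 1$; since $B(p,Q)=|Q|^2/(2p)$ and the reconstructed densities are bounded above (being convex combinations of the $\rho^k_{\coarse K}$, which are bounded), this forces $\|\bF^\mu\|_{\mathbb F_\Tc}$ to be uniformly bounded. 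Hence we may pass to a subsequence $\mu_n\to 0$ with $(\brho^{\mu_n},\bF^{\mu_n})\to(\brho^*,\bF^*)$ in the finite-dimensional space $[\mathbb P_{\coarse\Tc}]^{N+2}\times[\mathbb F_\Tc]^{N+1}$.

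Next I would check that the limit is feasible, i.e.\ $(\brho^*,\bF^*)\in\Cc_{N,\Tc}$. The constraint set $\Cc_{N,\Tc}$ is defined by the linear equations \eqref{eq:continuityth} and \eqref{eq:timebc} together with $\brho^k\ge 0$; all of these are closed conditions preserved under the (finite-dimensional) limit, so $(\brho^*,\bF^*)\in\Cc_{N,\Tc}$.

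Finally I would identify $(\brho^*,\bF^*)$ as a minimizer of \eqref{eq:geodth}. Let $(\brho,\bF)$ be any solution of \eqref{eq:geodth}. On one hand, by lower semicontinuity of $\Bc_{N,\Tc}$ (noted right after \eqref{eq:BBh}, and which in fact reduces here to continuity of $B$ on the relevant region, or to Fatou),
\[
\Bc_{N,\Tc}(\brho^*,\bF^*)\le \liminf_{n\to\infty}\Bc_{N,\Tc}(\brho^{\mu_n},\bF^{\mu_n}).
\]
On the other hand, \eqref{eq:opt_bound} gives $\Bc_{N,\Tc}(\brho^{\mu_n},\bF^{\mu_n})\le \Bc_{N,\Tc}(\brho,\bF)+\mu_n\tfrac{N}{N+1}|\Omega|$, whose $\limsup$ as $n\to\infty$ is $\Bc_{N,\Tc}(\brho,\bF)$. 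Combining, $\Bc_{N,\Tc}(\brho^*,\bF^*)\le \Bc_{N,\Tc}(\brho,\bF)$, and since $(\brho^*,\bF^*)$ is feasible the reverse inequality holds as well; thus $(\brho^*,\bF^*)$ is a minimizer, i.e.\ a solution of \eqref{eq:geodth}. Relabelling $(\brho^*,\bF^*)$ as $(\brho,\bF)$ concludes the argument.

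The only genuinely delicate point is the uniform bound on $\|\bF^{\mu}\|_{\mathbb F_\Tc}$: one must make sure the upper bound on the reconstructed density used in bounding $|Q|^2 = 2p\,B(p,Q)$ is independent of $\mu$, which follows because the reconstruction operators $\Lc_\Sigma,\mc H_\Sigma$ are positively $1$-homogeneous and monotone and the $\rho^k_{\coarse K}$ are controlled by the conserved total mass. Everything else is soft finite-dimensional compactness, so this is the step I would write out carefully; I would also remark that if the solution of \eqref{eq:geodth} is unique (e.g.\ in the strictly positive smooth regime) then no subsequence extraction is needed and the full family converges.
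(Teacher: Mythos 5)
Your proposal is correct and follows essentially the same route as the paper: uniform bounds via mass conservation and \eqref{eq:opt_bound}, finite-dimensional compactness, closedness of $\Cc_{N,\Tc}$, and lower semicontinuity of $\Bc_{N,\Tc}$ to identify the limit as a minimizer. The only (minor, equally valid) difference is the momentum bound: you read it off pointwise from $|F^k_\sigma|^2 = 2\,p^k_\sigma\,B(p^k_\sigma,F^k_\sigma)$ together with the $\mu$-independent upper bound on the reconstructed density, whereas the paper obtains it by testing against bounded fluxes $\bs{b}$ and applying Cauchy--Schwarz after rescaling by the square root of the reconstructed density.
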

	\begin{proof}
	Consider a sequence $(\mu_n)_n \subset \mathbb{R}^+$ converging to zero and the corresponding sequence $(\brho^{\mu_n},\bF^{\mu_n})$ of solutions to problem (\ref{eq:geodthmu}).
	We first derive a bound on $(\brho^{\mu_n},\bF^{\mu_n})$, independent of $\mu$.
	The bound on $\brho^{\mu_n}$ derives easily from the conservation of mass. To obtain a bound for the momentum $\bF^{\mu_n}$, for any $\bs{b} \in [\mathbb{F}_{\Tc}]^{N+1}$ with $|b^k_\sigma| \le 1$ for all $\sigma \in \Sigma, k \in \{1,..,N+1\}$, we observe that there exists a constant $C>0$ independent of $\mu$ such that
	\begin{equation}\label{eq:Fmubound}
	\sum_{k=1}^{N+1} \tau \langle (\bF^{\mu_n})^k , \bs{b}^k \rangle_{\mathbb{F}_{\Tc}} \le \sqrt{ 2 \Bc_{N,\Tc}(\brho^{\mu_n},\bF^{\mu_n})} || \bs{b} ||_{\brho^{\mu}} \le C \,,
	\end{equation}
	where the weighted norm $ || \cdot ||_{\brho^{\mu}}$ is defined via \eqref{eq:spacetimeprod}. Note that the first inequality derives from a simple rescaling argument with the term $\brho_s \in \mb{P}^{N+1}_{\Sigma}$, given by
	\[
	\brho_s^k = \sqrt{(\Rc_{\Sigma} \circ \Ic) \frac{(\brho^{\mu_n})^k+(\brho^{\mu_n})^{k-1}}{2}} \,,
	\]
	 and applying Cauchy-Schwarz. The second one is obtained using the inequality \eqref{eq:opt_bound}.
	Taking the sup with respect to  $\bs{b}$ in (\ref{eq:Fmubound}) we obtain the bound on $\bF^{\mu_n}$.
	
	The sequence $(\brho^{\mu_n},\bF^{\mu_n})$ is bounded hence we can extract a converging subsequence (still labeled with $\mu_n$ for simplicity) $(\brho^{\mu_n},\bF^{\mu_n})\rightarrow (\brho^*,\bF^*)$. Consider $(\brho,\bF)$ minimizer of the unperturbed problem (\ref{eq:geodth}). Using inequality (\ref{eq:opt_bound}) and taking the $\liminf$ for $n \rightarrow +\infty$, we obtain $\Bc_{N,\Tc}(\brho^*,\bF^*)=\Bc_{N,\Tc}(\brho,\bF)$, hence $(\brho^*,\bF^*)$ is a minimizer for problem (\ref{eq:geodth}).
	
\end{proof}

	\begin{remark} If the solution $(\brho,\bF)$ of the discrete problem (\ref{eq:geodth}) is unique, then the entire sequence $(\brho^{\mu_n},\bF^{\mu_n})$ converges to it. In case it is not unique, for any  solution $(\brho,\bF)$
	\[
	0 \le \Bc_{N,\Tc}(\brho^{\mu_n},\bF^{\mu_n}) - \Bc_{N,\Tc} (\brho,\bF) \le \mu_n ( \Jc_{N,\Tc}(\brho) - \Jc_{N,\Tc} (\brho^{\mu_n}) ) \,,
	\]
	and therefore $(\brho^{\mu_n},\bF^{\mu_n})$ converges up to subsequence to a solution $(\brho^*,\bF^*)$ with minimal $\Jc_{N,\Tc}$. In case the solution $\brho^*$ is strictly positive everywhere, the whole sequence $(\brho^{\mu_n},\bF^{\mu_n})$ converges again.
	\end{remark}
	
	The strict positivity derives automatically from the definition of the barrier function, which attains the value $+\infty$ in zero. As a consequence, for every value of $\mu>0$ the objective function $\Bc_{N,\Tc}(\brho,\bF)+\mu \Jc_{N,\Tc}(\brho)$ is smooth in a neighborhood of the solution $(\brho^{\mu},\bF^{\mu})$, ensuring a good behavior of the Newton scheme for the solution of the system of equations (\ref{eq:geodthmu_KKT}). It is possible to derive a quantitative bound for the positivity of $\brho^{\mu}$ as follows.
	
	\begin{proposition}\label{prop:positivity}
	There exists a constant $C>0$ independent of $\mu$ such that the density $\brho^{\mu}$ solution to problem (\ref{eq:geodthmu}) satisfies the following bound:
	\begin{equation}\label{eq:boundmu}
	(\rho^{\mu})_{\coarse{K}}^k \ge C \mu, \quad \forall \coarse{K} \in \coarse{\Tc}, \, \forall k \,.
	\end{equation}
	\end{proposition}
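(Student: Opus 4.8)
The plan is to convert \eqref{eq:boundmu} into a uniform-in-$\mu$ \emph{upper} bound on the slack variable $\bs{s}^{\mu}=(\bs{s}^{\mu,k})_{k=1}^{N}$ appearing in \eqref{eq:geodthmu_KKT}. Indeed, its third line gives $(\brho^{\mu})^{k}\odot\bs{s}^{\mu,k}=\bs{\mu}$, hence $(\rho^{\mu})^{k}_{\coarse{K}}=\mu/s^{\mu,k}_{\coarse{K}}$, so it suffices to produce a constant $C'>0$ independent of $\mu$ with $s^{\mu,k}_{\coarse{K}}\le C'$ for all $\coarse{K}\in\coarse{\Tc}$ and all $k\in\{1,\dots,N\}$ (the endpoints $k=0,N+1$ carry the prescribed data $\brho^{in},\brho^{f}$, so \eqref{eq:boundmu} is to be read at the free indices, and is trivial at the endpoints when $\brho^{in},\brho^{f}$ are strictly positive).

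First I would construct a \emph{fixed} competitor $(\bar{\brho},\bar{\bF})\in\Cc_{N,\Tc}$, not depending on $\mu$, whose density is uniformly positive at every interior time. Let $\bar{c}\coloneqq\frac{1}{|\Omega|}\sum_{\coarse{K}\in\coarse{\Tc}}\rho^{in}_{\coarse{K}}m_{\coarse{K}}>0$, set $\bar{\brho}^{0}=\brho^{in}$, $\bar{\brho}^{N+1}=\brho^{f}$ and $\bar{\brho}^{k}\equiv\bar{c}$ for $k=1,\dots,N$, take $\bar{\bF}^{k}=0$ for $2\le k\le N$ (the discrete continuity equation \eqref{eq:continuityth} at those steps then holds because $\bar{\brho}^{k}=\bar{\brho}^{k-1}$), and let $\bar{\bF}^{1},\bar{\bF}^{N+1}$ solve \eqref{eq:continuityth} at $k=1$ and $k=N+1$; this is possible since the corresponding right-hand sides have zero discrete mean (all $\bar{\brho}^{k}$ carry the same total mass) and hence lie in the range of $\mathrm{div}_{\Tc}$ — equivalently one inverts the discrete Laplacian $-\mathrm{div}_{\Tc}\nabla_{\Sigma}$ on zero-mean functions, which is nonsingular by Lemma \ref{lem:poincare}. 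Since $\Rc_{\Sigma}$ is positivity-preserving and normalized, the reconstruction $(\Rc_{\Sigma}\circ\Ic)\frac{\bar{\brho}^{1}+\brho^{in}}{2}$ is $\ge\bar{c}/2>0$ on every internal edge, and similarly at $k=N+1$, whereas the reconstruction is irrelevant for $2\le k\le N$ since $\bar{\bF}^{k}=0$; hence $M_{0}\coloneqq\Bc_{N,\Tc}(\bar{\brho},\bar{\bF})<+\infty$.

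Next I would run a linearized-barrier comparison, exactly as in the derivation of \eqref{eq:opt_bound}. The optimality system \eqref{eq:geodthmu_KKT} implies (comparing first-order conditions) that $(\brho^{\mu},\bF^{\mu})$ minimizes $\Lc_{N,\Tc}(\bphi^{\mu},\cdot,\cdot)+\sum_{k=1}^{N}\tau\langle\tilde{\blambda}^{k},\cdot^{\,k}\rangle_{\coarse{\Tc}}$ over $\brho\ge0$ and $\bF$, with $\tilde{\lambda}^{k}_{\coarse{K}}=-\mu/(\rho^{\mu})^{k}_{\coarse{K}}=-s^{\mu,k}_{\coarse{K}}$. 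Evaluating this convex functional at $(\bar{\brho},\bar{\bF})$ and at $(\brho^{\mu},\bF^{\mu})$ — the continuity term of $\Lc_{N,\Tc}$ vanishing at both feasible points — gives
\[
M_{0}-\sum_{k=1}^{N}\tau\langle\bs{s}^{\mu,k},\bar{\brho}^{k}\rangle_{\coarse{\Tc}}\ \ge\ \Bc_{N,\Tc}(\brho^{\mu},\bF^{\mu})-\sum_{k=1}^{N}\tau\langle\bs{s}^{\mu,k},(\brho^{\mu})^{k}\rangle_{\coarse{\Tc}}.
\]
By complementarity $\langle\bs{s}^{\mu,k},(\brho^{\mu})^{k}\rangle_{\coarse{\Tc}}=\mu|\Omega|$, and since $\Bc_{N,\Tc}(\brho^{\mu},\bF^{\mu})\ge0$ this yields $\sum_{k=1}^{N}\tau\langle\bs{s}^{\mu,k},\bar{\brho}^{k}\rangle_{\coarse{\Tc}}\le M_{0}+\frac{N}{N+1}\mu|\Omega|$. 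Since $\bar{\brho}^{k}\ge\bar{c}\,\mathbf{1}$ and every entry of $\bs{s}^{\mu,k}$ is nonnegative (second and third lines of \eqref{eq:geodthmu_KKT}), the left-hand side dominates $\bar{c}\,\tau\,m_{\coarse{K}}\,s^{\mu,k}_{\coarse{K}}$ for any single pair $(k,\coarse{K})$ with $1\le k\le N$, whence
\[
s^{\mu,k}_{\coarse{K}}\ \le\ \frac{M_{0}+\tfrac{N}{N+1}\mu|\Omega|}{\bar{c}\,\tau\,m_{\coarse{K}}}\,,
\]
which for $\mu\le1$ is bounded by a constant depending only on the mesh, on $N$ and on $\brho^{in},\brho^{f}$. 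Taking $C$ to be the reciprocal of that constant, complementarity gives $(\rho^{\mu})^{k}_{\coarse{K}}=\mu/s^{\mu,k}_{\coarse{K}}\ge C\mu$, i.e.\ \eqref{eq:boundmu}, the restriction $\mu\le1$ being harmless since one is interested in $\mu\to0$.

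The argument is essentially the linearization-of-the-barrier estimate already used for \eqref{eq:opt_bound}; the only genuinely new ingredient is the competitor of the second step, which must be uniformly bounded below at every interior time (this is where the positive constant $\bar c$ enters, and also why it is convenient to keep $\bar\brho$ constant rather than, say, linearly interpolating the boundary data — that would make $M_0$ harder to control). The one point requiring care is the cancellation of the continuity-constraint term of $\Lc_{N,\Tc}$ in the comparison, which relies on the feasibility of both $(\bar{\brho},\bar{\bF})$ and $(\brho^{\mu},\bF^{\mu})$.
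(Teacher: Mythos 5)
Your proof is correct, and it reaches the paper's bound by a somewhat different mechanism. The paper argues purely in the primal: it inserts the convex combination $(\epsilon\bs{c}+(1-\epsilon)\brho^{\mu},\,\epsilon\bF^{c}+(1-\epsilon)\bF^{\mu})$ as a competitor in \eqref{eq:geodthmu}, bounds the energy difference by $C\epsilon$ via convexity of $\Bc_{N,\Tc}$, bounds the barrier difference from below via convexity of $\Jc_{N,\Tc}$, and sends $\epsilon\to 0$ to obtain $\mu\sum_{k,\coarse{K}}\bigl(c^{k}_{\coarse{K}}/(\rho^{\mu})^{k}_{\coarse{K}}-1\bigr)m_{\coarse{K}}\tau\le C$. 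You instead reuse the fact, already established in the derivation of \eqref{eq:opt_bound}, that $(\brho^{\mu},\bF^{\mu})$ minimizes the linearized Lagrangian with multiplier $\tilde{\blambda}=-\bs{s}^{\mu}$, and combine it with the complementarity relation $\brho^{\mu}\odot\bs{s}^{\mu}=\bs{\mu}$ from \eqref{eq:geodthmu_KKT} to bound the slack variable directly. The two routes are two faces of the same estimate: since $\mu J'((\rho^{\mu})^{k}_{\coarse{K}})=-s^{\mu,k}_{\coarse{K}}$, the paper's $\epsilon\to 0$ limit is exactly your inequality $\sum_{k}\tau\langle\bs{s}^{\mu,k},\bar{\brho}^{k}\rangle_{\coarse{\Tc}}\le M_{0}+\tfrac{N}{N+1}\mu|\Omega|$ against a constant-in-the-interior competitor. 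Yours avoids the limiting argument and makes the interior-point structure explicit, at the price of having to verify finiteness of $M_{0}$, which your zero-flux construction at the intermediate steps handles cleanly (the paper's minimal-$\|\cdot\|_{\bs{c}}$ momentum $\bF^{c}$ plays the same role). Both proofs share the same fine print, which you state correctly: the bound concerns the free indices $k\in\{1,\dots,N\}$; the constant degenerates with $\tau$ and $\min_{\coarse{K}}m_{\coarse{K}}$; and, exactly as in the paper's final display, the raw estimate has the form $\rho^{\mu}\gtrsim\mu/(M_{0}+\mu|\Omega|)$, so uniformity in $\mu$ is really meant on a bounded range $\mu\le\mu_{0}$, which is all the continuation method needs.
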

	\begin{proof}
	Consider the solution $(\brho^{\mu},\bF^{\mu})$ to (\ref{eq:geodthmu}). We define the constant density $\bs{c} \in [\mb{P}^+_{\coarse{\Tc}}]^N, c^k_{\coarse{K}} = ({\sum_{K\in \Tc} m_{\coarse{K}}})^{-1}$. It can be easily checked that $\bs{c}$ is solution to
	\[
	\min_{\brho \in [\mb{P}_{\coarse{\Tc}}]^N} \Jc_{N,\Tc}(\brho) \quad \text{such that} \sum_{\coarse{K}\in\coarse{\Tc}} \rho_{\coarse{K}}^k m_{\coarse{K}} = 1, \forall k\,.
	\]
	From now on, with a slight abuse of notation, we consider $\bs{c}$ to be complemented with the boundary conditions $\brho^{in},\brho^f$. Thanks to the surjectivity of the divergence operator (to the space of discrete functions in $[\mathbb{P}_{\Tc}]^{N+1}$ with zero mean), we can find the momentum $\bF^c$, with minimal $|| \cdot ||_{\bs{c}}$ norm (defined via equation \eqref{eq:spacetimeprod}), such that $(\bs{c},\bF^c)\in\Cc_{N,\Tc}$.
	Taking the admissible competitor $(\hat{\brho},\hat{\bF}) = (\epsilon\bs{c} +(1-\epsilon) \brho^{\mu},\epsilon\bF^c +(1-\epsilon) \bF^{\mu}), \, \epsilon \in [0,1]$, for problem (\ref{eq:geodthmu}), it holds
	\begin{equation}\label{eq:ineqb1}
	\mu \left( \Jc_{N,\Tc}(\brho^{\mu}) - \Jc_{N,\Tc}(\hat{\brho}) \right) \le \Bc_{N,\Tc}(\hat{\brho},\hat{\bF})  -\Bc_{N,\Tc}(\brho^{\mu},\bF^{\mu}) \,.
	\end{equation}
	The right hand side of (\ref{eq:ineqb1}) is bounded: indeed, by convexity of $\Bc_{N,\Tc}$, it holds
	\begin{equation}\label{eq:boundepsA}
	\begin{aligned}
	\Bc_{N,\Tc}(\hat{\brho},\hat{\bF})  -\Bc_{N,\Tc}(\brho^{\mu},\bF^{\mu}) &\le  \epsilon \Bc_{N,\Tc}(\brho^c,\bF^c) +(1-\epsilon) \Bc_{N,\Tc}(\brho^{\mu},\bF^{\mu}) - \Bc_{N,\Tc}(\brho^{\mu},\bF^{\mu}) \\
	&\le C \epsilon \,.
	\end{aligned} 
	\end{equation}
	The left hand side of (\ref{eq:ineqb1}) can be bounded from below thanks to the convexity of $\Jc_{N,\Tc}$, by the following quantity
	\[
	\mu \sum_{k=1}^{N}\sum_{\coarse{K}\in\coarse{\Tc}} J'(\hat{\rho}_{\coarse{K}}^k) ((\rho^{\mu})_{\coarse{K}}^k-\hat{\rho}_{\coarse{K}}^k) m_{\coarse{K}} \tau =  \mu \epsilon \sum_{k=1}^{N}\sum_{\coarse{K}\in\coarse{\Tc}} J'(\hat{\rho}_{\coarse{K}}^k) ((\rho^{\mu})_{\coarse{K}}^k-c_{\coarse{K}}^k)) m_{\coarse{K}} \tau \,.
	\]
	Hence, we obtain
	\begin{equation}\label{eq:ineqb2}
  \mu \epsilon \sum_{k=1}^{N}\sum_{\coarse{K}\in\coarse{\Tc}} J'(\hat{\rho}_{\coarse{K}}^k) ((\rho^{\mu})_{\coarse{K}}^k-c_{\coarse{K}}^k)) m_{\coarse{K}} \tau \le C \epsilon \,.
	\end{equation}
	Simplifying $\epsilon$ in (\ref{eq:ineqb2}) and taking the limit for $\epsilon \rightarrow 0$, we obtain
	\[
	\sum_{k=1}^{N}\sum_{\coarse{K}\in\coarse{\Tc}} \left(\frac{c_{\coarse{K}}^k}{(\rho^{\mu})_{\coarse{K}}^k} -1\right)m_{\coarse{K}} \tau \le \frac{C}{\mu} \implies \min_{\coarse{K}} (m_{\coarse{K}}) \tau \sum_{k=1}^{N}\sum_{\coarse{K}\in\coarse{\Tc}} \frac{c_{\coarse{K}}^k}{(\rho^{\mu})_{\coarse{K}}^k} \le  \frac{C}{\mu} + |\Omega| T, 
	\]
	which implies the result.
\end{proof}	
	
	By theorem \ref{thm:muconvergence} the solution of problem (\ref{eq:geodthmu}) provides an approximation to a solution $(\bphi,\brho)$ to problem \eqref{eq:geodth_KKT}, although the smaller the parameter the more difficult it is to solve the problem using a Newton method.
	The idea is then to use a continuation method, that is construct a sequence of solutions to problem \eqref{eq:geodthmu_KKT} for a sequence of coefficients $\mu$ decreasing to zero, using each time the solution at the previous step as starting point for the Newton scheme.
	The resulting algorithm in shown in Algorithm \ref{alg:IPM}. We denote by $\theta$  the rate of decay for $\mu$;  by $\varepsilon_0$ and $\varepsilon_{\mu}$  the tolerances for the solution to \eqref{eq:geodth_KKT} and \eqref{eq:geodthmu_KKT}, respectively; and by $\delta_0$ and $\delta_{\mu}$ the error in the convergence towards solutions of the original and perturbed problem. The parameter $\delta_{\mu}$ can be taken to be a norm of the residual of the system of equations (\ref{eq:geodthmu_KKT}) or of the Newton step $\bs{d}$. Concerning $\delta_0$, it is either possible to choose a norm of the residual of the system of equations (\ref{eq:geodth_KKT}) or $\delta_0 = \mu \frac{N}{N+1} |\Omega|$, by virtue of \eqref{eq:opt_bound}, whether the proximity to the minimizer or to the minimum is preferred.
		
	\begin{algorithm}
		\SetAlgoLined
		Given the starting point $(\bphi_0,\brho_0,\bs{s}_0)$ and the parameters $\mu_0>0,\theta\in(0,1),\varepsilon_0>0$ \; 
		\While{$\delta_0>\varepsilon_0$}{
			$\mu = \theta \mu$ \;
			\While{$\delta_{\mu}>\varepsilon_{\mu}$}{
			compute Newton direction $\bs{d}$ for \eqref{eq:geodthmu_KKT}\;
			compute $\alpha\in(0,1]$ such that $\brho+\alpha \boldsymbol{d}_{\brho}>0$ and $\bs{s}+\alpha \bs{d}_{\bs{s}}>0$\;
			update: $(\bphi,\brho,\bs{s}) = (\bphi,\brho,\bs{s}) + \alpha (\bs{d}_{\bphi},\bs{d}_{\brho},\bs{d}_{\bs{s}})$ \;
			\If{$n>n_{max}$ \textnormal{\textbf{or}} $\alpha < \alpha_{min}$}{
				increase $\mu$ and repeat \;
			}
			}
		}
		\caption{}
		\label{alg:IPM}
	\end{algorithm}
	
	Since any intermediate solution for $\mu\neq0$ is not of interest, a very common approach in interior point methods is to set a relatively big tolerance $\varepsilon_{\mu}$, or even to do just one Newton step per value of $\mu$.
	Nonetheless, a small tolerance $\varepsilon_{\mu}$ avoids the density to get accidentally too close to the boundary of the feasibility domain, i.e. too close to zero, which would imply a drop in the regularity of the specific problem at hand. For this reason we consider $\varepsilon_{\mu} = \varepsilon_0$.
		
	A linesearch technique is typically employed in order to ensure global convergence of the Newton scheme. However, in many cases it leads to a non negligible cost by forcing the Newton scheme to do several steps before reaching convergence. Instead of modifying the step size $\alpha$, we adaptively control $\theta$ in order to force the convergence. The Newton scheme is repeated with an increased $\theta$ (i.e.\ with an increased $\mu$) if it is not able to converge in $n_{max}$ steps. The step size $\alpha$ is chosen just to ensure that $\brho$ and $\bs{s}$ do not become negative. Again, the Newton scheme is repeated if $\alpha$ needs to be smaller than $\alpha_{min}$. In particular, taking $\alpha_{min} = 1$ one only allows full Newton steps.
	
	There exist of course several optimization solvers that could tackle the solution of problem \eqref{eq:geodth}, most of which are usually based on interior point strategies, especially for large scales. Nevertheless, the specificity of the problem at hand, its non-linearity of course but more importantly its lack of smoothness, led us to develop our own solver, in order to better handle it. Moreover, the solution of the sequence of linear systems requires an ad-hoc strategy, as mentioned in sections \ref{sec:numerics}-\ref{sec:perspectives}.
	Finally, we remark that in the particular case of the linear reconstruction, the corresponding dual problem in \eqref{eq:dualproblem} can be cast in the form of a second-order cone program, which can be solved again using an interior point method in polynomial time. This does not apply to the case of the harmonic reconstruction (or more general reconstructions) for which the dual problem has a more complex structure.


\section{Numerical results}\label{sec:numerics}

	In this section we assess the performance of the scheme using several two-dimensional numerical tests.
	In particular, we demonstrate the numerical implications of enriching the space of discrete potential, both from a qualitative and quantitative point of view. As already noted in remark \ref{rmk:Iidentity}, considering the two subdivisions of the domain to be the same and taking $\Ic$ to be the identity operator, we recover the discretization presented in \cite{lavenant2019unconditional}. We will refer to this case as the non-enriched scheme.
	Needless to say, the greater is the richness of the space of discrete potentials the higher is the computational complexity.
	
	For the construction of the enriched scheme we use the nested meshes described in section \ref{ssec:mesh}. In particular, the coarse mesh is given by a regular triangulation of the domain with only acute angles. Here, we will use the first family of grids provided in \cite{fvca5}, which discretize the domain $\Omega = [0,1]^2$.
	
	The code is implemented in MATLAB and is available online\footnote{\href{https://github.com/gptod/OT-FV}{https://github.com/gptod/OT-FV}}. In particular, we exploit the built-in MATLAB direct solver to solve the sequence of linear systems generated by Algorithm \ref{alg:IPM}.
	For $\mu\rightarrow 0$ the Jacobian matrix becomes ill-conditioned and the computation time, along with the memory consumption, rapidly increases for this solver.
	Using an iterative method could be extremely beneficial in this sense.
	However, the design of effective preconditioners is a delicate issue and should take into account the structure of the problem at hand (see, e.g., the general survey \cite{BenziSurvey}). Therefore, we do not explore the use of such techniques in this article.
	We calibrated Algorithm \ref{alg:IPM} with the following parameters: $\theta=0.2$, $\alpha_{min}=0.1$, $\epsilon_0=10^{-6}$ ($\epsilon_0=10^{-8}$ for the convergence tests), $\mu_0=1, \bphi_0=\bs{0},\brho_0=\bs{c}$ ($\bs{c}$ defined as in \ref{prop:positivity}) and $\bs{s}_0$ satisfying $\brho_0 \odot \bs{s}_0 = \bs{\mu}_0$. In all the simulations performed in this section, but also more generally, the algorithm proved to be extremely robust under this configuration. The Newton scheme rarely reaches a breakdown and, in case this happens, the adaptive strategy on the parameter $\theta$ overcomes the issue. Notice only that for complex simulations the value $\mu_0$ may be increased to ease the start of the Newton scheme.
	Finally, we stress that all our results are presented in their piecewise-constant form on the grid, without any kind of interpolation.


\subsection{Oscillations}\label{ssec:oscillations}

	In this section we show that the discrete density obtained by using the non-enriched scheme can be very oscillatory. We observed numerically that the oscillations are more severe in cases where there is high compression of mass, i.e. when the corresponding continuous velocity field is not divergence free, and also more persistent with refinement (this is also confirmed by the convergence tests shown below in section \ref{ssec:testconv}).
	On the other hand, this type of instability can be prevented using the enriched scheme, which eliminates the oscillations almost entirely.
		
	In order to illustrate this phenomenon, we consider the interpolation between the two densities
	\[
	\rho^{in}(x,y) =  \cos\left( 2\pi\left|\x-\x_0\right| \right) +\frac{3}{2} \,, \quad \rho^f(x,y) = -\cos\left( 2\pi\left|\x-\x_0\right| \right)+\frac{3}{2} \,,
	\]
	where $\x = (x,y)$ and $\x_0=(\frac{1}{2},\frac{1}{2})$. For $\coarse{h}=0.0625$ and $\#\coarse{\Tc}=896$, and for a number $N+1=8$ of time steps, we compute the approximate Wasserstein interpolation between $\brho^{in} = \left(\rho^{in}(\x_K)\right)_{\coarse{K}\in\coarse{\Tc}}$ and $\brho^{f} = \left(\rho^{f}(\x_K)\right)_{\coarse{K}\in\coarse{\Tc}}$, by solving problem (\ref{eq:geodth}), in four different ways: with the enriched and the non-enriched schemes, both with linear and harmonic reconstruction. The results are shown in figure \ref{fig:cosmid}.
	The non-enriched scheme with linear reconstruction exhibits severe oscillations which disappear using the enriched one. Oscillations are evident also using the harmonic reconstruction. The enriched scheme with harmonic reconstruction provides the smoothest solution. 
	
	\begin{figure}
		\centering
		\includegraphics[trim={2.5cm 1cm 1.9cm 0.5cm},clip,width=0.4\textwidth]{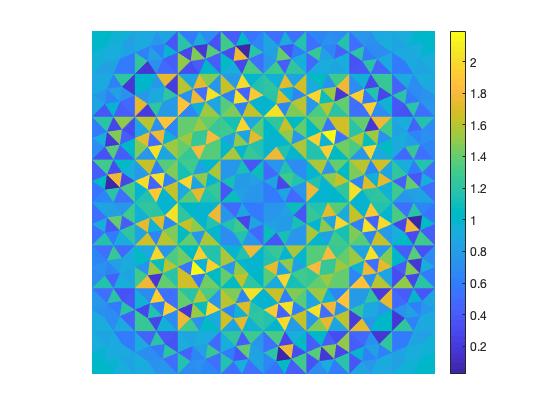}
		\includegraphics[trim={2.5cm 1cm 1.9cm 0.5cm},clip,width=0.4\textwidth]{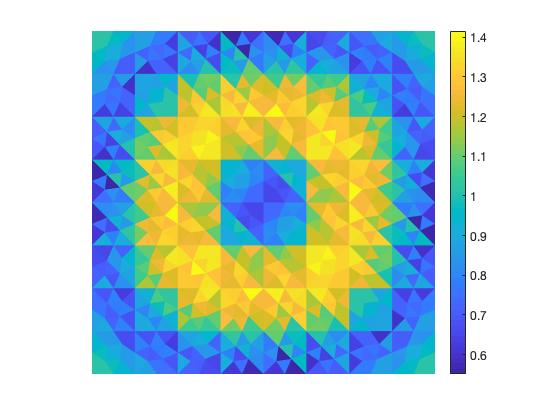} \\
		\includegraphics[trim={2.5cm 1cm 1.9cm 0.5cm},clip,width=0.4\textwidth]{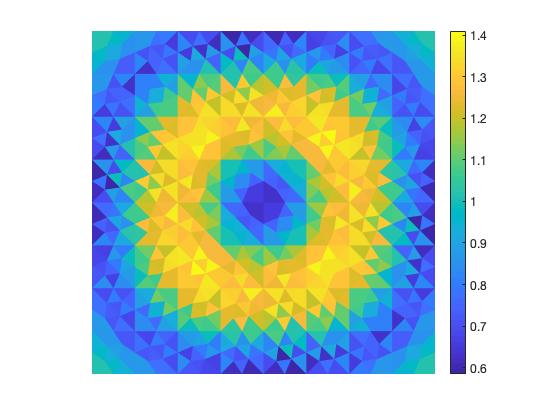}
		\includegraphics[trim={2.5cm 1cm 1.9cm 0.5cm},clip,width=0.4\textwidth]{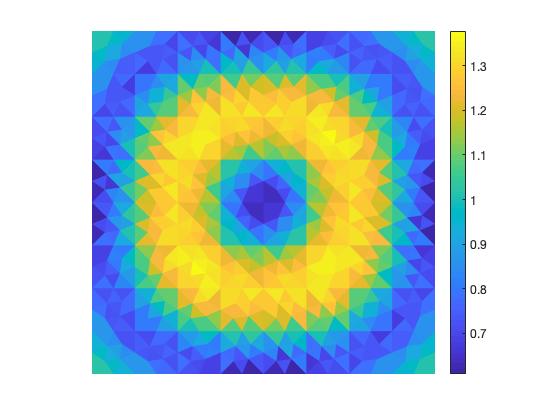}
		\caption{Midpoint between two sinusoidal functions. Non-enriched scheme in the top row, enriched scheme in the bottom one. Linear reconstruction on the left, harmonic reconstruction on the right.}
		\label{fig:cosmid}
	\end{figure}

	It is worth mentioning that the non-enriched scheme does not exhibit oscillations for rectangular cartesian grids. Indeed, oscillations do not appear either in other works based on finite differences \cite{carrillo2019primal,papadakis2014optimal,osher2018computations}, which coincide with finite volumes on such simple grids.


	\subsection{Convergence test}\label{ssec:testconv}
	
	We now quantify numerically the convergence rate for the potential, the Wasserstein distance and the density, by considering specific smooth solutions $(\phi,\rho)$ to (\ref{eq:geod}) with compact support, and with smooth initial and final densities $\rho^{in}$ and $\rho^f$. 
Note, however, that the convergence results of section \ref{sec:convergence} are less general, since they require strictly positive densities, and only apply to the linear reconstruction.
	
	We compute the solutions to problem (\ref{eq:geodth}), with $\brho^{in}=(\rho^{in}(\x_{\coarse{K}}))_{\coarse{K}\in\coarse{\Tc}}, \brho^{f}=(\rho^{f}(\x_\coarse{K}))_{\coarse{K}\in\coarse{\Tc}}$, on a sequence of admissible meshes $\left(\coarse{\mathcal{T}}, \coarse{\overline{\Sigma}}, {(\mathbf{x}_{\coarse{K}})}_{\coarse{K}\in\mathcal{T}}\right)$, and with an increasing number of time steps.
	We consider four type of errors: the error on the distance, the $L^1$ error on the density curve, the weighted $L^2$ error on the potential and on its gradient on the whole trajectory. We define a discrete potential $\bphi\in [\mathbb{P}_{\Tc}]^{N+1}$ by sampling the continuous solution, i.e.\ $\bphi^k_K = \phi(t^{k-1}+\frac{\tau}{2},\x_K)$, for $k\in\{1,..,N+1\}$, and similarly for the density we introduce $\brho\in[\mathbb{P}_{\Tc}]^{N+1}$, with $\brho_K^k = \rho(t^{k-1}+\frac{\tau}{2},\x_K)$, for $k\in\{1,..,N+1\}$. Given the discrete solution $(\tilde{\bphi},\tilde{\brho})$, the four errors are then computed as follows:
	\begin{alignat*}{2}
	&\epsilon_{W_2} = | W(\rho^{in},\rho^{f}) - W_{N,\Tc}(\brho^{in}, \brho^f) | \,, \hspace{2em}
	& &\epsilon_{\phi} = \sum_{k=1}^{N+1} \tau \langle (\tilde{\phi}^k_K -\phi^k_K)^2 , \Ic (\frac{\tilde{\brho}^k+\tilde{\brho}^{k-1}}{2}) \rangle_{\Tc}\,, \\
	&\epsilon_{\nabla \phi} =  \| \nabla_\Sigma  \tilde{\bphi} - \nabla_\Sigma \bphi \|_{\tilde{\brho}}\,, \quad
	& &\epsilon_{\rho} = \sum_{k=1}^{N+1}\tau \sum_{\coarse{K}\in\coarse{\Tc}} | \brho^k_K - \frac{\tilde{\brho}^{k}_K +\tilde{\brho}^{k-1}_K}{2}| m_{\coarse{K}} \,,
	\end{alignat*}
	where the weighted (semi-)norm $ || \cdot ||_{\tilde{\brho}}$ is defined via \eqref{eq:spacetimeprod}. 
	
	We first consider the simple case of a pure translation. We consider the optimal transport problem between the two following densities:
	\[
	\begin{aligned}
	&\rho^{in}(x,y) = \left( 1+\cos \left( \frac{10^2\pi}{3^2} \left|\x-\x_1\right|^2 \right) \right) \boldsymbol{1}_{\left|\x-\x_1\right|\leq \frac{3}{10}} \,, \\
	&\rho^{f}(x,y) = \left(1+\cos \left( \frac{10^2\pi}{3^2} \left|\x-\x_2\right|^2 \right) \right) \boldsymbol{1}_{\left|\x-\x_2\right|\leq \frac{3}{10}} \,,
	\end{aligned}
	\]
	where $\x_1=(\frac{3}{10},\frac{3}{10}), \x_2=(\frac{7}{10},\frac{7}{10})$.
	The density interpolation and the potential are simply given by
	\[
	\begin{aligned}
	&\rho(t,x,y) = \left( 1+\cos \left( \frac{10^2\pi}{3^2} \left|\x-\x_t\right|^2 \right) \right) \boldsymbol{1}_{\left|\x-\x_t\right|\leq \frac{3}{10}} \,, \\
	&\phi(t,x,y) = \frac{2}{5}x+\frac{2}{5}y-\frac{4}{25}t \,,
	\end{aligned}
	\]
	where $\x_t=(1-t)\x_1+t\x_2=(\frac{3}{10}+\frac{2}{5}t,\frac{3}{10}+\frac{2}{5}t)$, and the Wasserstein distance is $W_2(\rho^{in},\rho^{f}) =\frac{2\sqrt{2}}{5}$. Note in particular that the associated velocity field is constant in space. The errors defined above and the respective rates of convergence are shown in table \ref{tab:translation}. In this case, all the considered errors converge with a rate of at least one for both the enriched and non-enriched scheme and both type of reconstructions.
		
	\begin{table}
	\caption{Convergence test on the translation.}
	\label{tab:translation}
		\begin{tabular}{cccccccccc}
			\toprule
			\toprule
			$\coarse{h}$ & $N$ & $\epsilon_{W_2}$ & rate & $\epsilon_{\phi}$ & rate & $\epsilon_{\nabla \phi}$ & rate & $\epsilon_{\rho}$ & rate \\
			\toprule
			\toprule
			\multicolumn{10}{c}{Non-enriched scheme with linear reconstruction} \\
			\midrule
			0.250 &  1 & 3.109e-02 & / & 1.802e-02 & / & 2.153e-01 & / & 6.092e-01 & / \\ 
			0.125 &  3 & 3.375e-03 & 3.204 & 4.857e-03 & 1.892 & 9.574e-02 & 1.169 & 2.779e-01 & 1.132 \\ 
			0.062 &  7 & 1.190e-03 & 1.504 & 1.442e-03 & 1.752 & 3.947e-02 & 1.278 & 1.431e-01 & 0.958 \\ 
			0.031 & 15 & 2.351e-04 & 2.339 & 4.105e-04 & 1.813 & 1.550e-02 & 1.348 & 7.115e-02 & 1.008 \\ 
			0.016 & 31 & 2.874e-05 & 3.032 & 1.086e-04 & 1.919 & 5.708e-03 & 1.442 & 3.110e-02 & 1.194 \\ 
			\toprule
			\multicolumn{10}{c}{Non-enriched scheme with harmonic reconstruction} \\
			\midrule
			0.250 &  1 & 4.897e-02 & / & 2.382e-02 & / & 1.825e-01 & / & 5.870e-01 & / \\ 
			0.125 &  3 & 9.950e-03 & 2.299 & 5.635e-03 & 2.080 & 7.503e-02 & 1.282 & 2.535e-01 & 1.211 \\ 
			0.062 &  7 & 4.009e-03 & 1.311 & 1.751e-03 & 1.686 & 3.393e-02 & 1.145 & 1.172e-01 & 1.114 \\ 
			0.031 & 15 & 1.168e-03 & 1.780 & 5.055e-04 & 1.792 & 1.433e-02 & 1.243 & 4.907e-02 & 1.256 \\ 
			0.016 & 31 & 3.074e-04 & 1.925 & 1.409e-04 & 1.843 & 6.040e-03 & 1.247 & 2.057e-02 & 1.254 \\ 
			\toprule
			\multicolumn{10}{c}{Enriched scheme with linear reconstruction} \\
			\midrule
			0.250 &  1 & 3.880e-02 & / & 2.084e-02 & / & 2.231e-01 & / & 5.774e-01 & / \\ 
			0.125 &  3 & 3.714e-03 & 3.385 & 5.129e-03 & 2.023 & 9.375e-02 & 1.251 & 2.343e-01 & 1.301 \\ 
			0.062 &  7 & 1.457e-03 & 1.350 & 1.568e-03 & 1.710 & 4.303e-02 & 1.124 & 9.481e-02 & 1.305 \\ 
			0.031 & 15 & 3.551e-04 & 2.037 & 4.391e-04 & 1.836 & 1.935e-02 & 1.153 & 3.233e-02 & 1.552 \\ 
			0.016 & 31 & 6.712e-05 & 2.403 & 1.145e-04 & 1.939 & 8.719e-03 & 1.150 & 1.228e-02 & 1.397 \\ 
			\toprule
			\multicolumn{10}{c}{Enriched scheme with harmonic reconstruction} \\
			\midrule
			0.250 &  1 & 4.512e-02 & / & 2.240e-02 & / & 1.999e-01 & / & 5.740e-01 & / \\ 
			0.125 &  3 & 6.907e-03 & 2.708 & 5.187e-03 & 2.111 & 8.270e-02 & 1.273 & 2.370e-01 & 1.276 \\ 
			0.062 &  7 & 2.852e-03 & 1.276 & 1.597e-03 & 1.699 & 3.975e-02 & 1.057 & 1.036e-01 & 1.193 \\ 
			0.031 & 15 & 8.292e-04 & 1.782 & 4.521e-04 & 1.821 & 1.857e-02 & 1.098 & 4.014e-02 & 1.369 \\ 
			0.016 & 31 & 2.116e-04 & 1.970 & 1.221e-04 & 1.889 & 8.802e-03 & 1.077 & 1.668e-02 & 1.266 \\
			\bottomrule
		\end{tabular}
	\end{table}

	We now consider a more challenging test, the optimal transport problem between the two densities
	\[
	\begin{aligned}
	&\rho^{in}(x,y) = \left(1+\cos\left(2\pi\left(x-\frac{1}{2}\right)\right)\right) \,, \\ 
	&\rho^{f}(x,y) = \frac{1}{c} \left(1+\cos\left(\frac{2\pi}{c} \left(x-\frac{1}{2}\right)\right )\right) \boldsymbol{1}_{\left|x-\frac{1}{2}\right| \le\frac{c}{2}} \,,
	\end{aligned}
	\]
	where $\rho^f$ is the compression of a factor $c$ of $\rho^{in}$. The exact expression of the density interpolation is 
	\[
	\rho(t,x,y) =  \frac{1}{t(c-1)+1} \left(1+\cos\left(\frac{2\pi}{t(c-1)+1} \left(x-\frac{1}{2}\right)\right)\right) \boldsymbol{1}_{\left|x-\frac{1}{2}\right|\le\frac{t(c-1)+1}{2}} \,,
	\]
  whereas the exact potential is
	\[
	\phi(t,x,y) = \frac{1}{2} \frac{c-1}{t(c-1)+1}\left(x-\frac{1}{2}\right)^2.
	\]
	The Wasserstein distance between the two densities is
	\[
	W_2(\rho^{in},\rho^{f}) = \sqrt{\frac{(\pi^2-6) (c-1)^2}{12\pi^2}} \,.
	\]
	The numerical results for $c=0.3$ are shown in table \ref{tab:compression}.
	Again, in all the four cases, the Wasserstein distance and the gradient of the potential converge, with the errors exhibiting at least a linear rate of convergence. However, the density does not seem to converge in the non-enriched scheme with linear reconstruction, whereas it converges in the other cases.
	
	\begin{table}
	\caption{Convergence test on the compression.}
	\label{tab:compression}
		\begin{tabular}{cccccccccc}
			\toprule
			\toprule
			$\coarse{h}$ & $N$ & $\epsilon_{W_2}$ & rate & $\epsilon_{\phi}$ & rate & $\epsilon_{\nabla \phi}$ & rate & $\epsilon_{\rho}$ & rate \\
			\toprule
			\toprule
			\multicolumn{10}{c}{Non-enriched scheme with linear reconstruction} \\
			\midrule
			0.250 &  1 & 1.653e-02 & / & 4.734e-03 & / & 6.903e-02 & / & 2.288e-01 & / \\ 
			0.125 &  3 & 1.421e-03 & 3.540 & 1.471e-03 & 1.687 & 3.301e-02 & 1.064 & 1.285e-01 & 0.832 \\ 
			0.062 &  7 & 2.978e-04 & 2.255 & 4.651e-04 & 1.661 & 1.729e-02 & 0.933 & 1.859e-01 & -0.532 \\ 
			0.031 & 15 & 4.850e-04 & -0.704 & 1.466e-04 & 1.666 & 1.038e-02 & 0.736 & 2.193e-01 & -0.238 \\ 
			0.016 & 31 & 2.030e-04 & 1.257 & 4.491e-05 & 1.706 & 6.351e-03 & 0.709 & 2.378e-01 & -0.117 \\ 
			\toprule
			\multicolumn{10}{c}{Non-enriched scheme with harmonic reconstruction} \\
			\midrule
			0.250 &  1 & 2.380e-03 & / & 2.785e-03 & / & 3.954e-02 & / & 2.666e-01 & / \\ 
			0.125 &  3 & 8.112e-03 & -1.769 & 1.403e-03 & 0.989 & 2.384e-02 & 0.730 & 7.503e-02 & 1.829 \\ 
			0.062 &  7 & 2.805e-03 & 1.532 & 4.851e-04 & 1.532 & 1.162e-02 & 1.037 & 7.046e-02 & 0.091 \\ 
			0.031 & 15 & 6.207e-04 & 2.176 & 1.242e-04 & 1.966 & 5.419e-03 & 1.100 & 4.919e-02 & 0.518 \\ 
			0.016 & 31 & 1.652e-04 & 1.910 & 3.574e-05 & 1.797 & 2.690e-03 & 1.011 & 3.393e-02 & 0.536 \\ 
			\toprule
			\multicolumn{10}{c}{Enriched scheme with linear reconstruction} \\
			\midrule
			0.250 &  1 & 1.746e-02 & / & 4.130e-03 & / & 6.212e-02 & / & 2.333e-01 & / \\ 
			0.125 &  3 & 2.093e-03 & 3.060 & 9.486e-04 & 2.122 & 2.725e-02 & 1.189 & 7.694e-02 & 1.600 \\ 
			0.062 &  7 & 2.436e-04 & 3.103 & 2.827e-04 & 1.747 & 1.274e-02 & 1.097 & 5.805e-02 & 0.406 \\ 
			0.031 & 15 & 1.538e-04 & 0.664 & 7.698e-05 & 1.876 & 5.834e-03 & 1.127 & 3.551e-02 & 0.709 \\ 
			0.016 & 31 & 5.447e-05 & 1.497 & 1.932e-05 & 1.994 & 2.751e-03 & 1.085 & 2.325e-02 & 0.611 \\
			\toprule
			\multicolumn{10}{c}{Enriched scheme with harmonic reconstruction} \\
			\midrule
			0.250 &  1 & 7.281e-03 & / & 3.069e-03 & / & 4.756e-02 & / & 2.606e-01 & / \\ 
			0.125 &  3 & 2.609e-03 & 1.480 & 7.574e-04 & 2.019 & 2.332e-02 & 1.028 & 5.786e-02 & 2.171 \\ 
			0.062 &  7 & 1.626e-03 & 0.682 & 2.984e-04 & 1.344 & 1.112e-02 & 1.069 & 4.280e-02 & 0.435 \\ 
			0.031 & 15 & 2.752e-04 & 2.563 & 7.551e-05 & 1.983 & 5.378e-03 & 1.048 & 2.409e-02 & 0.829 \\ 
			0.016 & 31 & 6.788e-05 & 2.020 & 2.166e-05 & 1.802 & 2.700e-03 & 0.994 & 1.537e-02 & 0.648 \\ 
			\bottomrule
		\end{tabular}
	\end{table}
	
	It is noticeable from the convergence tests we performed how in the case of a pure translation the instability tends to disappear with refinement, whereas with compression this depends on the reconstruction used: the harmonic reconstruction seems to prevent the issue, the linear one does not. Our strategy of enriching the discrete space of potentials alleviates the problem and enables to recover the convergence of the density.
	
	
	\subsection{Geodesic}
	
	To conclude, we consider the transport problem between a cross distributed density and its rotation by $45$ degrees. We compute the discrete solution with the enriched scheme, using the harmonic reconstruction, with $\coarse{h}=0.0156, \#\coarse{\Tc}=14336$ and $N+1=32$ time steps. The approximate density interpolation is displayed in figure \ref{fig:crossinterp}: as expected, each branch of the cross splits symmetrically in two parts which move towards the two opposite branches of the rotated cross.
	
	\begin{figure}
		\centering
		\begin{minipage}{0.89\textwidth}
			\centering
			\includegraphics[trim={8cm 2.5cm 6.5cm 1.5cm},clip,width=0.30\textwidth]{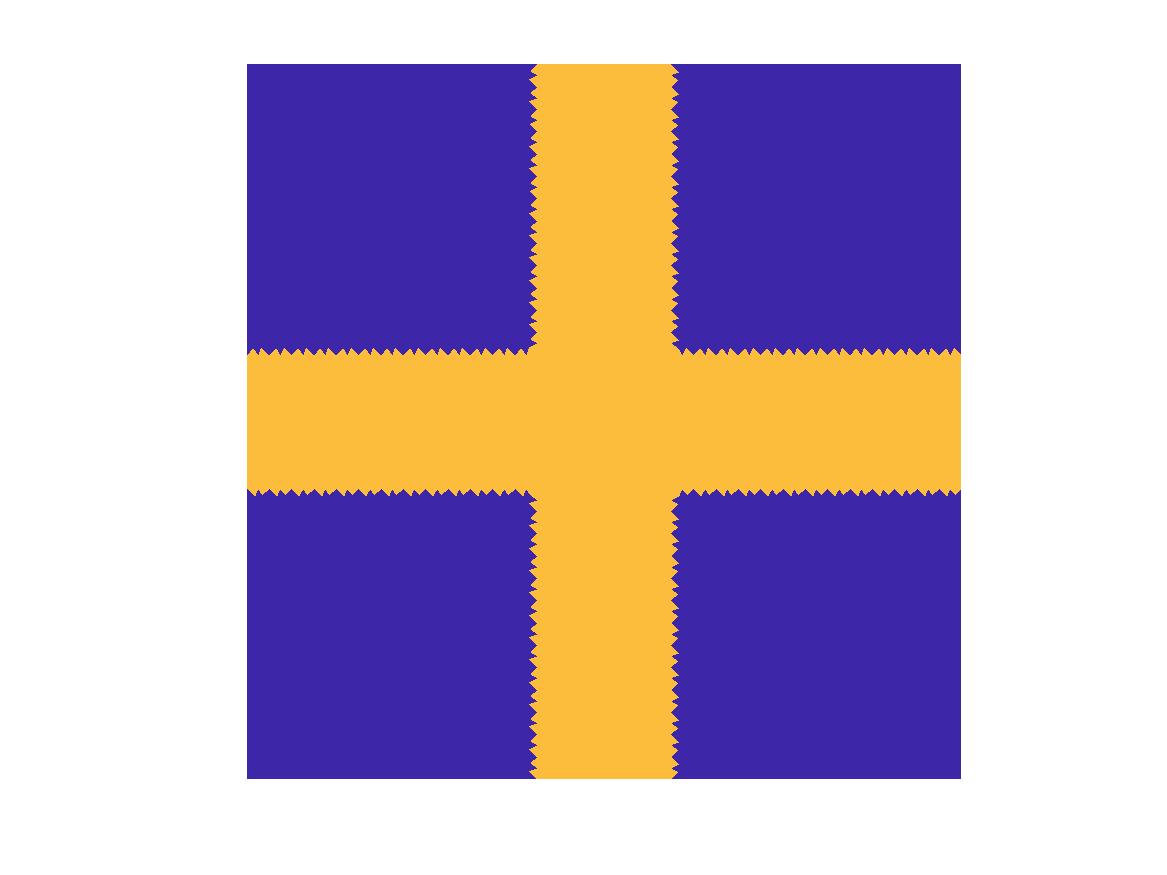} 
			\includegraphics[trim={8cm 2.5cm 6.5cm 1.5cm},clip,width=0.30\textwidth]{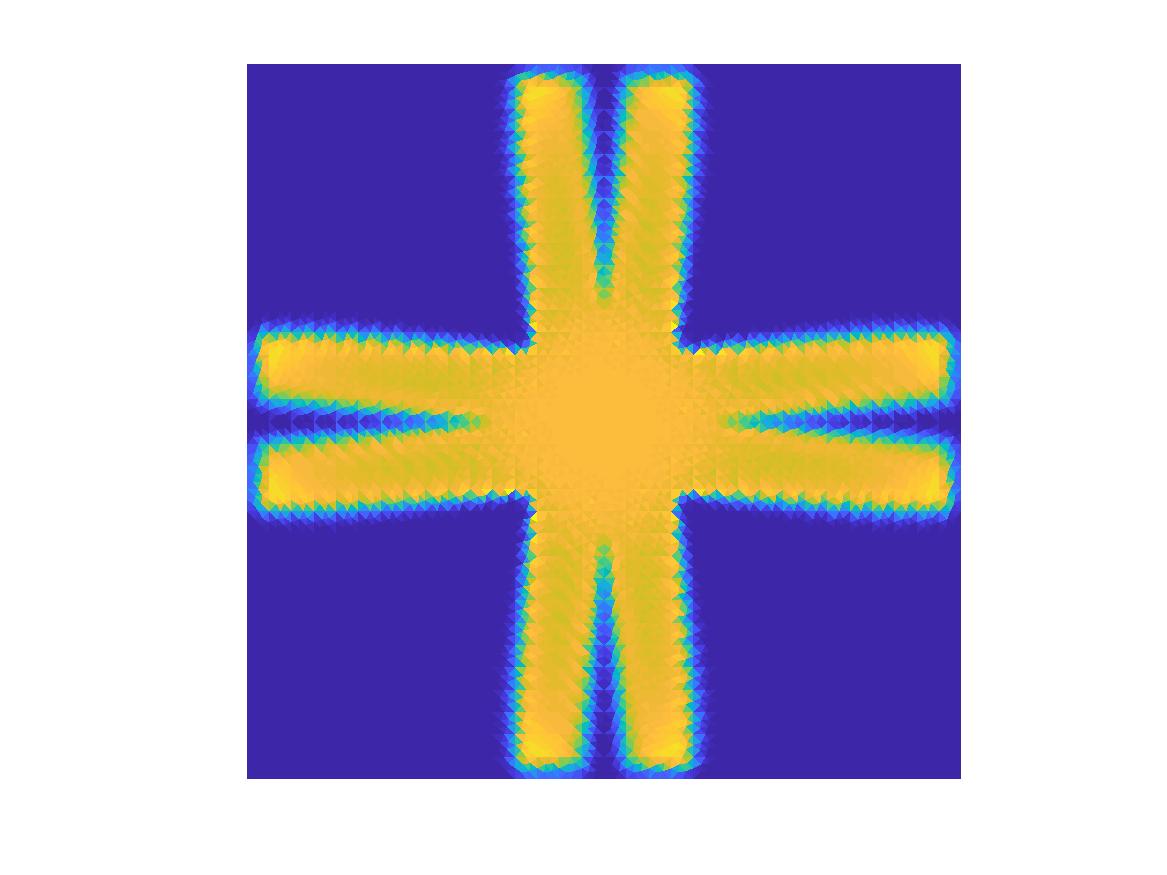} 
			\includegraphics[trim={8cm 2.5cm 6.5cm 1.5cm},clip,width=0.30\textwidth]{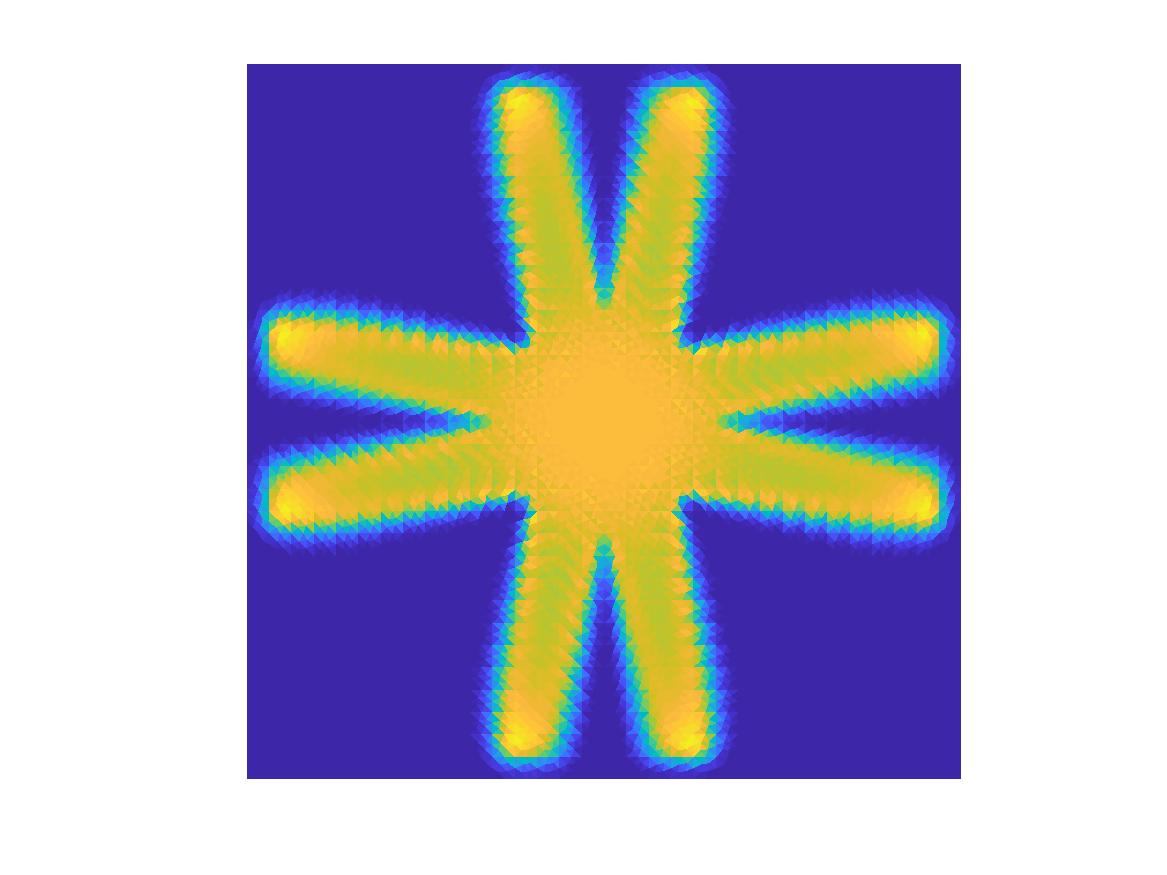} \\
			\includegraphics[trim={8cm 2.5cm 6.5cm 1.5cm},clip,width=0.30\textwidth]{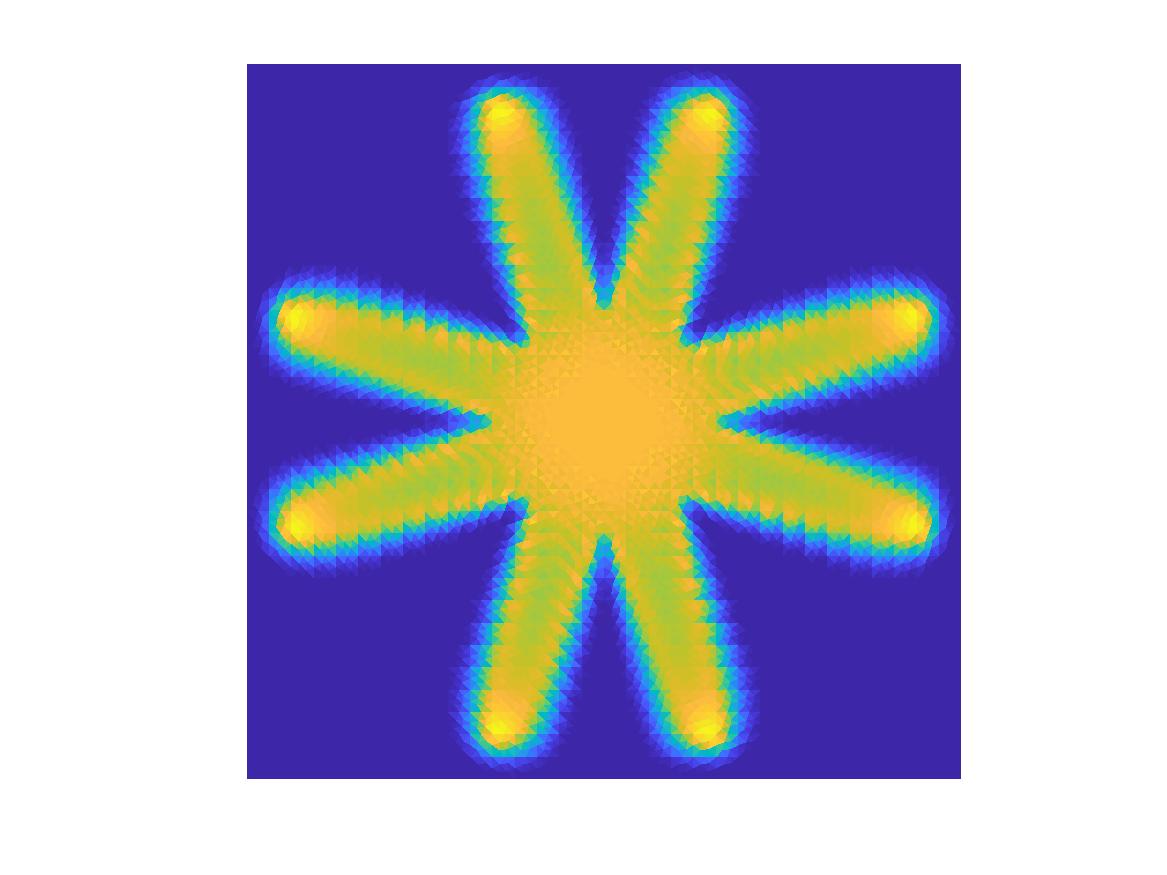}
			\includegraphics[trim={8cm 2.5cm 6.5cm 1.5cm},clip,width=0.30\textwidth]{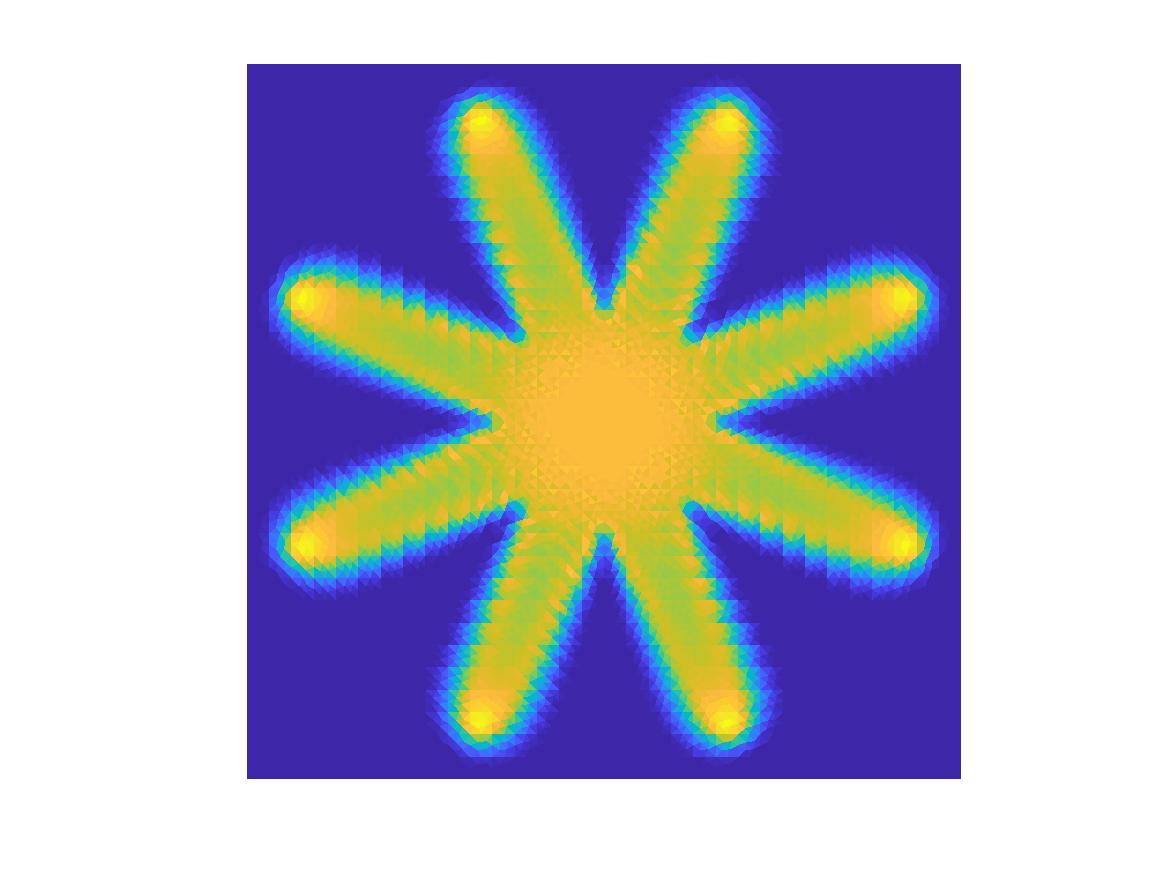}
			\includegraphics[trim={8cm 2.5cm 6.5cm 1.5cm},clip,width=0.30\textwidth]{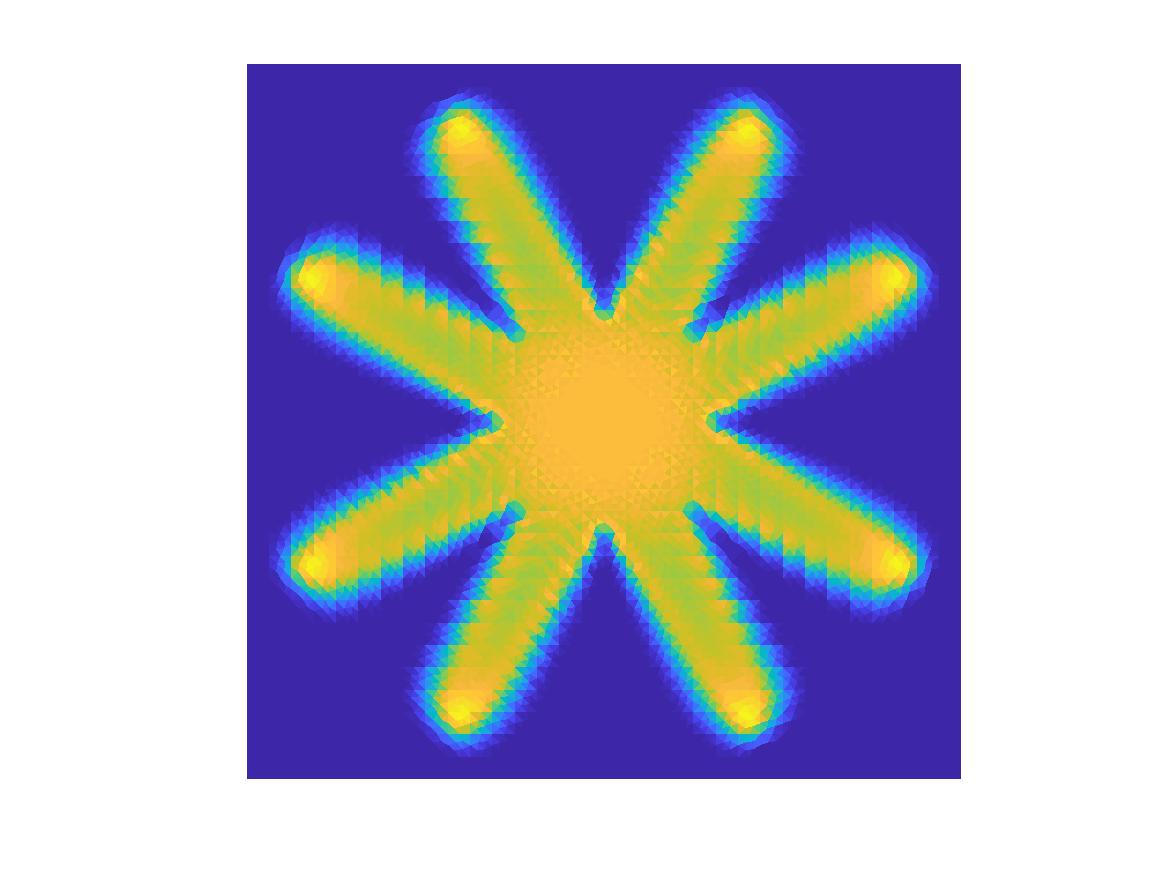} \\
			\includegraphics[trim={8cm 2.5cm 6.5cm 1.5cm},clip,width=0.30\textwidth]{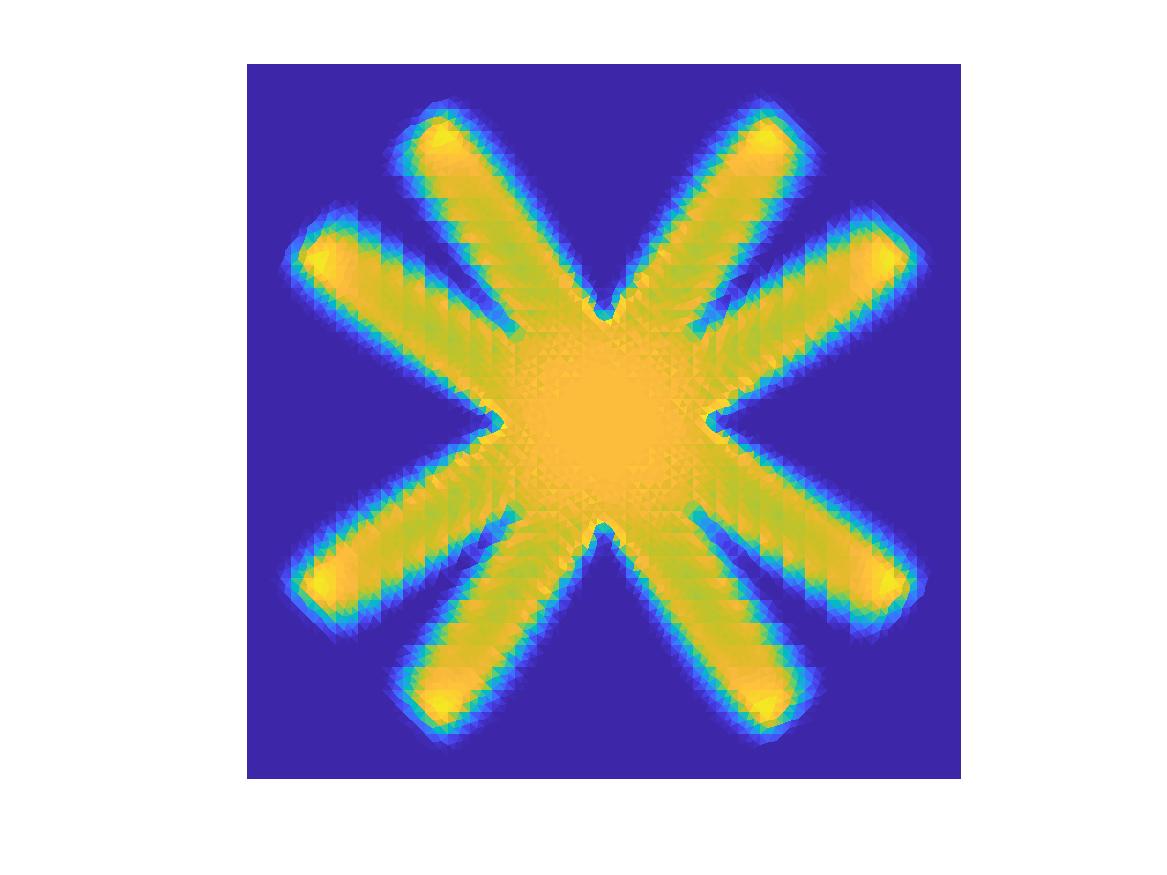}
			\includegraphics[trim={8cm 2.5cm 6.5cm 1.5cm},clip,width=0.30\textwidth]{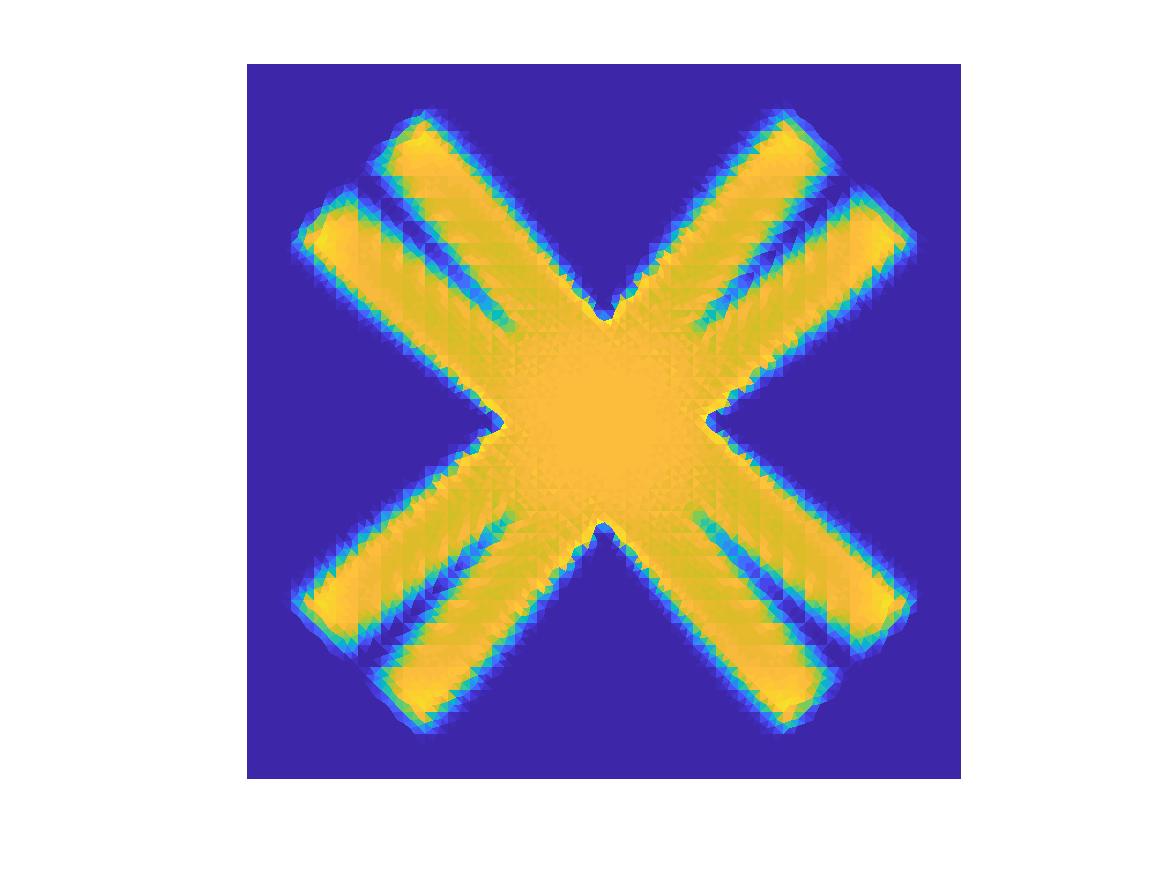}
			\includegraphics[trim={8cm 2.5cm 6.5cm 1.5cm},clip,width=0.30\textwidth]{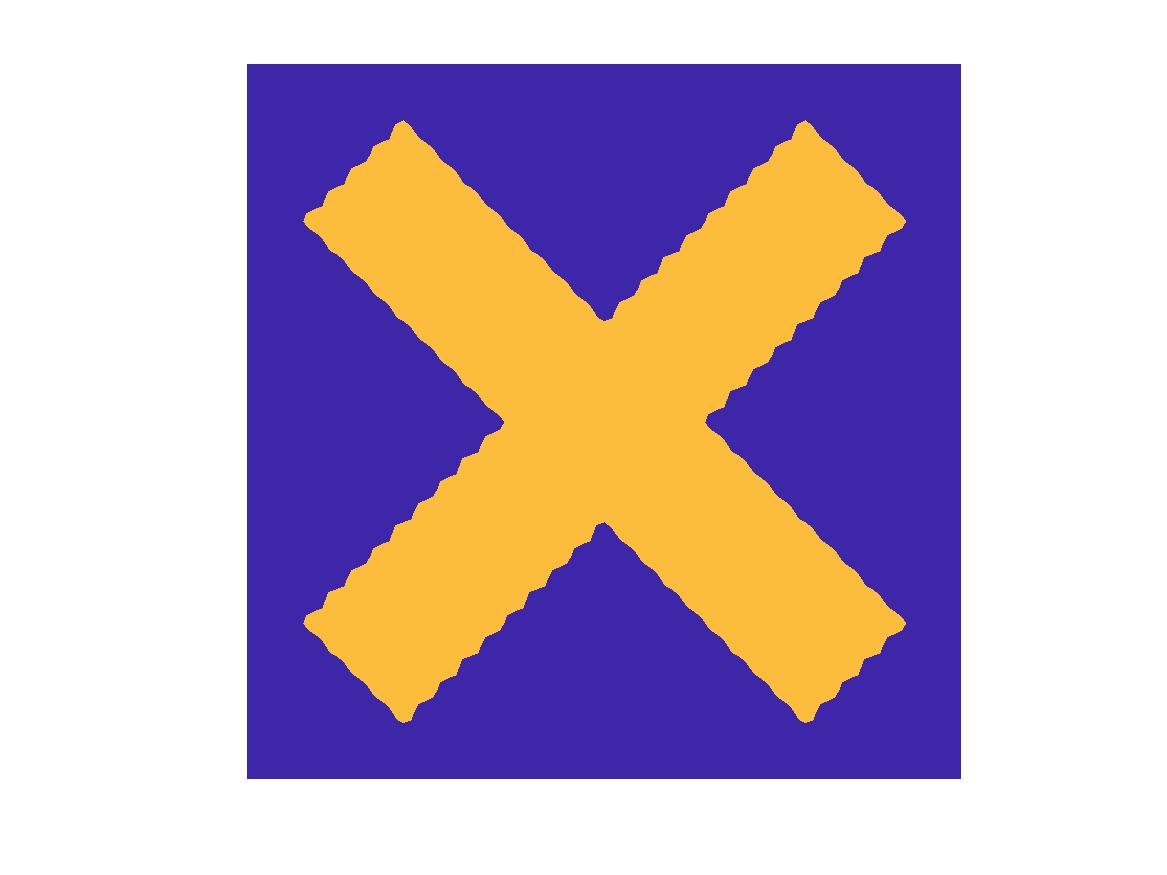}
		\end{minipage}
		\hspace{-2.5em}
		\begin{minipage}{0.09\textwidth}
		\centering
		\vspace{0.4em}
		\includegraphics[trim={16.5cm 0cm 1.4cm 0cm},clip,width=0.8cm,height=6cm]{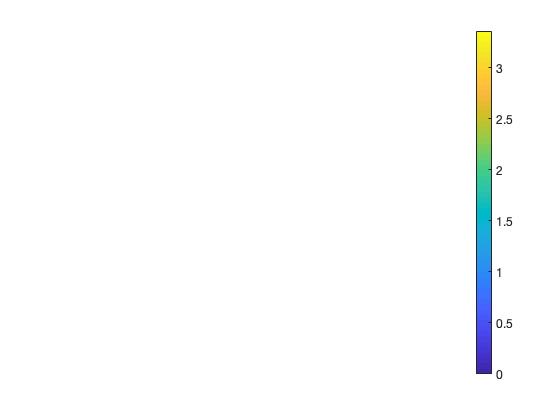}
		\end{minipage}
		\caption{Wasserstein interpolation between a cross distributed density and its rotation by $45$ degrees. Time increases from left to right, from top to bottom.}
		\label{fig:crossinterp}
	\end{figure}
	
	
	\section{Perspectives}\label{sec:perspectives}
	
	In this article we considered TPFA discretizations of the dynamical formulation of the quadratic optimal transport problem. In particular, we proposed a method based on nested meshes to deal with numerical instabilities that occur when using this type of techniques. We also proved quantitative convergence estimates in the case of smooth solutions and proposed the use of interior point techniques for the efficient numerical solutions of the scheme. Several interesting questions remain open on all the three aspects of the problem we considered:
	\begin{enumerate}
	  \item As for the issue of the numerical instabilities, the origin of these remains unclear, although their appearance is not surprising since the optimal transport interpolation does not imply any direct regularizing effect (e.g., the interpolation between two Dirac masses stays a Dirac). Our approach (together with previous works on the $L^1$ optimal transport problem \cite{facca1,facca2}) points towards the existence of a hidden inf-sup type of condition, analogous to the well-known ones for linear saddle point problems, which guaranties some regularity in the interpolation. 
	 \item The convergence results we proposed are only partial as they require that the density is strictly positive and also they do not apply to the density itself. Note, however, that the positivity requirement is only needed for the approximation result on the continuity equation in proposition \ref{prop:elliptic}, and this could be avoided using for example the regularization technique used by Lavenant in \cite{lavenant2019unconditional}. Note also that the same type of inf-sup condition needed for stability could also be used to derive convergence rates for the density.
	 \item The interior point technique we proposed for the solutions of the discrete system of optimality conditions can be made even more effective by using iterative methods for the solution of the linearized system. However, this is possible only once appropriate preconditioners are available. The challenging nature of the problem, which is mostly due to the interplay of the time and space discretization, implies that the design of such preconditioners requires a dedicated study and must be adapted to the discrete problem itself.  
	\end{enumerate}

	
	\section*{Acknowledgements}
	The authors are grateful to Mario Putti, Thomas Gallou\"et and Clément Cancès for interesting discussions and their suggestions on the subject.
	GT acknowledges that this project has received funding
	from the European Union’s Horizon 2020 research and innovation
	programme under the Marie Skłodowska-Curie grant agreement No 754362. AN acknowledges that this work was supported by a public grant as part of the Investissement
 d'avenir project, reference ANR-11-LABX-0056-LMH, LabEx LMH.
	\begin{center}
		\vspace{0.5em}
		\includegraphics[width=0.15\textwidth]{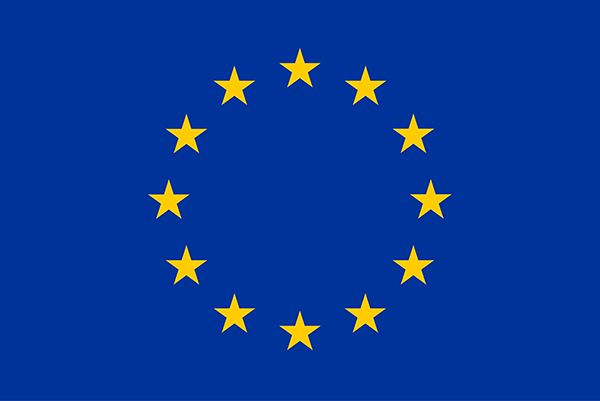}
	\end{center}
	
	\bibliographystyle{plain}      
	\bibliography{refs}

\end{document}